\newtheorem{theorem}{Theorem}
\newtheorem{corollary}[theorem]{Corollary}
\newtheorem{proposition}[theorem]{Proposition}
\newtheorem{lemma}[theorem]{Lemma}
\theoremstyle{definition}
\newtheorem{definition}[theorem]{Definition}
\newtheorem{remark}[theorem]{Remark}
\newcommand{\interior}{\mathrm{int}}
\newcommand{\id}{\mathrm{id}}
\newcommand{\p}{\mathfrak{p}}
\newcommand{\q}{\mathfrak{q}}
\newcommand{\e}{\mathfrak{e}}
\renewcommand{\c}{\mathfrak{c}}
\newcommand{\R}{\mathds{R}}
\renewcommand{\O}{\mathcal{O}}
\newcommand{\Z}{\mathds{Z}}
\newcommand{\N}{\mathds{N}}
\newcommand{\D}{\mathds{D}}
\newcommand{\Q}{\mathds{Q}}
\newcommand{\T}{\widehat{T}}
\newcommand{\A}{\mathds{A}}
\renewcommand{\AA}{\widetilde{\mathds{A}}}
\renewcommand{\L}{\mathds{L}}
\renewcommand{\P}{\mathds{P}}
\newcommand{\X}{\widetilde{X}}
\newcommand{\F}{\widehat{F}}
\newcommand{\acc}{\mathcal{A}}
\newcommand\restr[2]{{
  \left.\kern-\nulldelimiterspace 
  #1 
  \right|_{#2} 
  }}
\title[Accessible points rotate as prime ends]{Accessible points rotate as prime ends in backward or forward time}
\author{Luis Hern\'{a}ndez--Corbato}
\subjclass[2010]{Primary 37E30. Secondary: 37E45.}
\address{
   Luis Hernández-Corbato\\
   Instituto de Ciencias Matemáticas CSIC-UAM-UC3M-UCM\\
   C/ Nicolás Cabrera 13--15\\
   Spain}
\email{luishcorbato@mat.ucm.es}
\thanks{
The author is supported by MINECO (grants SEV-2015-0554 and MTM2015-63612-P) and benefitted from a Juan de la Cierva-Formaci\'on grant from the 2014 call (code FJCI-2014-21020) whose effective date was postponed until June 2016. The author expresses its gratitude to ICMAT for its early reception in 2016 during which most of this worked was carried out.
}
\begin{document}

\maketitle

\begin{abstract}
Let $f$ be a homeomorphism of $\overline{\A}$, the closed annulus, isotopic to the identity and
let $X$ be a closed $f$--invariant subset of $\overline{\A}$ whose complement is homeomorphic to the half--open annulus.
The dynamics in the circle of prime ends of the complement of $X$ has an associated rotation number, $\rho$.
We prove that either the rotation number of all forward semi--orbits of accessible points of $X$ are well--defined
and equal to $\rho$ or the rotation number of all backward semi--orbits of accessible points of $X$
is well--defined  and equal to $\rho$.
The result generalizes to the open and half--open annulus provided the semi--orbits under consideration are relatively compact.
\end{abstract}

\section{Introduction}
Every open connected and simply connected subset $V$ of the Riemann sphere which misses at least two points
is conformally equivalent to $\D = \{z \in \mathds{C}: |z| < 1\}$ by the Riemann mapping theorem.
In general, the conformal map $\phi\colon \D \to V$ cannot be extended to $\partial \D$, not even continuously, because the topology
of the boundary of $V$ can be very wild. In fact, the extension is possible if and only if $\partial V$
is locally connected and the extension is injective iff $\partial V$ is a Jordan curve.
Whilst proving this result, Caratheodory \cite{caratheodory} developed the theory of \emph{prime ends}. This theory associates an ideal
boundary $\P(V)$, the circle of prime ends of $V$, to every simply connected planar proper domain $V$ and provides
$\widehat{V} = V \sqcup \P(V)$, the prime end compactification of $V$,
with a topology that makes it homeomorphic to $\overline{\D}$.
Points in $\partial \D$ are identified to prime ends of $V$
but prime ends do not correspond in general to points in $\partial V$.

Any homeomorphism $\varphi\colon \D \to V$ can be extended to a homeomorphism between $\overline{\D}$ and $\widehat{V}$.
Evidently, if $\varphi$ extends to a point $z \in \partial \D$ there is an arc
$\gamma_z \colon [0,1] \to V \cup \{\varphi(z)\}$ which lands at $\gamma_z(1) = \varphi(z)$.
The converse result is true provided $\varphi$ is conformal.
The set of points $x \in \partial V$ for which there exists an arc $\gamma_x$ contained in $V$ except from $x$, its
landing point, are called \emph{accessible}.
At the same time, such an arc $\gamma_x$ determines a unique prime end in $\P(V)$.
Prime ends and accessible points are thus very closely related.
The article establishes a link between these objects in terms of rotation in annular dynamics.

The notion of \emph{rotation number} of circle homeomorphisms introduced by Poincaré was generalized
first to circle degree--1 endomorphisms and then to annular and toral dynamics (see \cite{misiurewicz}).
A great amount of research has been conducted around this concept.
Given a homeomorphism of the (open, closed or half--open) annulus $f \colon A \to A$ isotopic to the identity $F$ a lift of $f$ to
$\widetilde{A}$, the universal cover of $A$, and $x \in \widetilde{A}$, the limit of $(F^n(x))_1/n$ as $n \to +\infty$
does not always exist ($(\cdot)_1$ denotes the value of the lift of the angular coordinate in the cover).
Therefore, the notion of rotation number of a point is not well--defined for every point in $A$.
The rotation interval of $x$ is the set of limit points of sequences of the form
$\{((F^{n_i}(x))_1 - (x)_1)/n_i\}_i$, where $n_i$ tends to $+\infty$. Similarly, the rotation interval of
an $f$--invariant continuum $X \subset A$ is defined as the set of limit points of sequences
$\{((F^{n_i}(x_i))_1 - (x_i)_1)/n_i\}_i$, where $x_i$ belongs to the lift of $X$.
The rotation interval is, indeed, an interval which, as opposed to the circle homeomorphisms case, may be non--degenerate.
The convexity follows from an equivalent description of the rotation interval adapted from \cite{misiurewicz}:
the rotation interval is the set of values $\int_{A} u \, d\mu$, where $u$ is the displacement
function ($u(z) = (F(\tilde{z}))_1 - (\tilde{z})_1$ for any lift $\tilde{z}$ of $z$)
and $\mu$ ranges over all $f$--invariant Borel probability measures supported on $X$.

An essential annular continuum $X \subset \overline{\A}$ is a continuum whose complement $\overline{\A} \setminus X$
has exactly two connected components, $U_+$ and $U_-$, which are homeomorphic to the half--open annulus $\A$.
If $X$ does not properly contain any other essential annular continuum it is called a circloid.
Barge and Gillette \cite{barge} proved the following realization result for invariant circloids with empty interior (cofrontiers):
any rational number in the rotation interval of $X$ is realized as the rotation number of a periodic orbit in $X$.
Koropecki \cite{koropecki} has recently generalized this theorem to any circloid.
Furthermore, the periodic orbit can be chosen from $\partial X$ provided the rotation interval is non--degenerate
\cite{koropasseggi}.

Using prime end theory we can compactify each annular domain $U_{\pm}$ with a circle $\P(U_{\pm})$ and the result is a closed annulus.
A homeomorphism which leaves $U_{\pm}$ invariant induces homeomorphisms in $\P(U_{\pm})$,
whose associated rotation numbers, $\rho_{\pm}$, are called prime end rotation numbers.
Matsumoto \cite{matsumoto} (see an alternative proof in \cite{nagoya})
proved that these numbers belong to the rotation interval of $X$.
However, even cofrontiers may have non--degenerate rotation intervals so one cannot expect to learn much
of the dynamics within $X$ just from the prime end rotation numbers.

A strong relationship with respect of their rotation numbers could be expected between accessible points and prime ends.
The kind of domains $U$ in which we locate our discussion are annular domains such $U_{\pm}$ in the preceeding paragraphs.
More precisely, $U$ will be an open and proper subset of $\A$, invariant under an orientation--preserving homeomorphism and homeomorphic to $\A$. We will denote its complement by $X$, $U = \A \setminus X$.
The question that motivates this article is whether the rotation number of an accessible point $p \in \partial U = \partial X$, if defined, is somehow related to the rotation number of a prime end determined by an arc in $U$ landing at $p$ under the induced action $\widehat{f}$ on the circle of prime ends $\P(U)$.
On one hand, since $\widehat{f}$ is a circle homeomorphism, the latter rotation number
is independent of the choice of prime end and is called \emph{prime end rotation number} and will be denoted $\widehat{\rho}(F, X) \in \R$. It is computed in the
universal cover of the set of prime ends with the induced dynamics that is coherent with $F$, a prescribed lift of $f$.
On the other side of the potential link, the rotation number $\lim_{n\to+\infty} (F^n(p))_1/n$ may not exist even if $p$ is an accessible point, see Section \ref{sec:horseshoe}.
Surprisingly, the mismatch is solved by broadening our scope and considering both the backward and the forward orbit of $p$
in the search for potential rotation numbers related to $\widehat{\rho}(F, X)$.
Whenever they exist, the limits of $(F^n(p))_1/n$ as $n \to -\infty$ and $n \to +\infty$ will be called
\emph{backward} and \emph{forward rotation number} of $p$, respectively. They are the rotation numbers associated
to $\O^-(p)$, the backward orbit of $p$, and $\O^+(p)$, the forward orbit of $p$, respectively.

A concrete example in which to study the preceeding question are Birkhoff attractors (see Le Calvez \cite{lecalvez}).
These attractors are essential annular continua that appear when we consider a certain class of dissipative twist maps in the annulus.
For an open subclass of these maps, the corresponding Birkhoff attractor $\Lambda$ has non--trivial rotation interval
whose endpoints are the prime end rotation numbers $\rho_- < \rho_+$ associated to their complementary regions.
Generically, every rational number $a/b \in (\rho_-, \rho_+)$ is realized by a hyperbolic periodic point $p_{a/b} \in \Lambda$,
none of which is accessible.
Consider one of these periodic points $p_{a/b}$.
Its stable manifold $W^s(p_{a/b})$ meets $\Lambda$ in infinitely many accessible points.
The same is true in the universal cover for $\widetilde{\Lambda}$ the lift of $\Lambda$ and any choice of lift $\widetilde{p_{a/b}}$ of $p_{a/b}$.
Every $x \in W^s(\widetilde{p_{a/b}}) \cap \widetilde{\Lambda}$ satisfies
$0 = \lim_{n \to +\infty} (F^n(x))_1 - (F^n(p_{a/b}))_1$ so $\lim_{n \to +\infty} (F^n(x))_1/n - a/b = 0$. Thus, we find
plenty of accessible points in $\widetilde{\Lambda}$ whose forward rotation number is different from the prime end rotation number.
In this paper we prove (see Theorem \ref{thm:dicotomia}) that these points have well--defined backward
rotation number equal to the prime end rotation number.

Trivially, backward and forward rotation numbers of periodic points coincide.
It is natural to wonder whether accessible points with different rotation numbers can coexist in the boundary of invariant domains. Cartwright and Littlewood \cite{cartwright} answered in the negative for open topological disks in $S^2$ except for the possible existence of a fixed point result of the coalescence of periodic points. This exception disappears in planar dynamics, see \cite{yorke}.
Recently, Passeggi, Potrie and Sambarino \cite{uruguayos} have proven the uniqueness of the rotation number
of periodic points under a weaker form of accessibility in their way to showing that if the rotation interval
associated to an invariant attracting cofrontier is non--degenerate the dynamics has positive topological entropy.

The main result of this paper shows that either backward or forward rotation numbers
of accessible points are always well--defined and equal to the prime end rotation number.
The theorem requires relative compactness
on the orbits under consideration without which rotation may not even be well--defined.
We use the term \emph{bounded} to refer to orbits in the universal cover whose projection to the annulus is relatively compact.
Notice that in the case of the closed annulus $\overline{\A}$ the compactness assumption on orbits is automatically satisfied.

\begin{theorem}\label{thm:dicotomia}
Let $f \colon \A \to \A$ be an orientation--preserving homeomorphism
leaving invariant a non--empty closed set $X$ such that $\A \setminus X$ is homeomorphic to $\A$.
Fix $F$ a lift of $f$ to $\AA$, denote $\X$ the lift of $X$
and $\rho = \widehat{\rho}(F, X)$ the prime end rotation number of $F$ in $X$.
Then, one of the following statements holds:
\begin{itemize}
\item For every accessible point $p \in \X$ with bounded backward semi--orbit
$$\lim_{n \to -\infty} \frac{(F^n(p))_1}{n} = \rho.$$
\item For every accessible point $q \in \X$ with bounded forward semi--orbit
$$\lim_{n \to +\infty} \frac{(F^n(q))_1}{n} = \rho.$$
\end{itemize}
\end{theorem}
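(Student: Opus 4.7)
The plan is to work in the universal cover $\AA$ and reduce the rotation of an accessible point to that of its associated lifted prime end, whose angular drift is known to be $\rho$ by hypothesis. Since $U := \A \setminus X$ is homeomorphic to $\A$, it is essential, so the full preimage $\widetilde{U} \subset \AA$ is connected and is the universal cover of $U$, topologically a closed half--plane. Adjoining a lifted copy of the prime end circle $\P(U)$ yields the universal cover $\widetilde{\widehat{U}}$ of the prime end compactification of $U$, and $F$ extends to a homeomorphism $\widehat{F}$ of $\widetilde{\widehat{U}}$ whose restriction to the prime end boundary line is a lift of $\widehat{f}$. Consequently the angular coordinate of $\widehat{F}^n(\tilde{\xi})$ on that line grows like $n\rho + o(n)$ as $|n|\to\infty$, for every $\tilde{\xi}$ on the line.

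Given an accessible point $p$ and an arc $\gamma \subset U \cup \{p\}$ landing at $p$, lift $\gamma$ to $\tilde{\gamma} \subset \widetilde{U}$ landing at a lift $\tilde{p}$ of $p$, and let $\tilde{\xi}$ be the lifted prime end determined by $\tilde{\gamma}$. The iterate $F^n(\tilde{\gamma})$ is an arc landing at $F^n(\tilde{p})$ and representing $\widehat{F}^n(\tilde{\xi})$. Identifying angular coordinates of $\AA$ with those on the prime end line up to a bounded error via a fixed homeomorphism between $\widehat{U}$ and a closed annulus, one obtains
\[
(F^n(\tilde{p}))_1 \;=\; n\rho + o(n) + \Delta_n,
\]
where $\Delta_n$ denotes the angular extent of $F^n(\tilde{\gamma})$ in the half--plane picture. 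Hence it suffices to show that $\Delta_n/n \to 0$ in the appropriate time direction. The \emph{pointwise} version of the dichotomy should follow from a cross--cut argument: for any single accessible point, the arcs $F^n(\tilde{\gamma})$ cannot have angular extent growing super--linearly in \emph{both} time directions, since this would force the family $\{f^n(\gamma)\}$ to wind unboundedly both forward and backward while remaining in a compact part of $\A$ by the boundedness hypothesis, contradicting the planarity of $\widetilde{U}$.

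The principal difficulty is promoting this pointwise dichotomy to the uniform one required by the theorem: the same time direction must work for \emph{every} accessible point. I would argue by contradiction, supposing $p$ has the correct rotation only in backward time while $q$ has it only in forward time; then, after lifting, $F^{-n}(\tilde{\gamma}_p)$ and $F^n(\tilde{\gamma}_q)$ have angular extent $o(|n|)$, while their opposite--time counterparts do not. Tracking these arcs in the half--plane $\widetilde{U}$ and using that a deck transformation of $\AA$ acts by unit horizontal translation, one should produce two long arcs within a common horizontal strip, winding in opposite senses, which by planarity would have to cross --- in contradiction with $F$ being a homeomorphism of $\widetilde{U}$. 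Throughout, the boundedness hypothesis confines the iterates to a strip of $\AA$ and prevents the escape to infinity in the radial direction that could otherwise invalidate both the $\Delta_n/n \to 0$ reduction and the crossing argument.
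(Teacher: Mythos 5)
Your reduction $(F^n(\tilde{p}))_1 = n\rho + o(n) + \Delta_n$ rests on an identification that does not exist: a homeomorphism between $\widehat{U}$ and a closed annulus need not relate the angular coordinate of $\AA$ to the coordinate on the prime--end line with bounded error. Concretely, fix a lead line $\beta$ projecting to a simple arc in $\A$, and let $D$ be the fundamental domain bounded by $\beta$ and $T\beta$ in $\widetilde{U}$. The prime--end dynamics tells you that some lead line $\gamma_n$ determining $\widehat{F}^n(\tilde{\xi})$ lies in $T^{m(n)}(D)$ with $m(n) = n\rho + o(n)$, hence $F^n(\tilde p)\in T^{m(n)}(\overline D)$; but the paper emphasizes that $(D)_1$ may be unbounded, so $(F^n(\tilde p))_1 - m(n)$ can be arbitrarily large. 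Controlling this discrepancy --- showing that the angular extent of the relevant lead lines grows sublinearly in at least one time direction --- is precisely the content of the theorem, not a preliminary reduction. So your first step restates the goal rather than reducing it, and the ``pointwise dichotomy'' you invoke (no superlinear angular extent in both directions) is left entirely unproved.

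The crossing argument you sketch for promoting the pointwise statement to the global one is essentially the relative--winding--number argument the paper uses in Theorem~\ref{thm:diffdirections}, but that argument only produces a contradiction when the backward drift of $p$ and the forward drift of $q$ point in \emph{opposite} angular directions. If both drift toward the same end of $\R$, the two families of arcs wind the same way and never need to cross; there is no topological obstruction, and indeed Theorem~\ref{thm:counterexamplebouncing} exhibits a single accessible point whose backward and forward drifts both diverge to $+\infty$. Handling this same--direction case is the hard part of the paper: it requires the auxiliary hypotheses (H1)/(H2), the choice of a free disk $B$ around a non--fixed accumulation point of the drifting orbit, and the analysis in Propositions~\ref{prop:notinfront} and~\ref{prop:haciaatrasrotanigual} of whether $F(B)$ lies ``in front of'' or ``behind'' $B$ in the prime--end order. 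Your proposal does not engage with any of this, so it cannot close the case that actually requires work.
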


In brief, the theorem tells us that accessible points behave roughly as expected as far as rotation is concerned, at least
in one time direction.
Linear drifts from the expected rotational behavior are only allowed either in backward time or in forward time,
not simultaneously.
In particular, if a point has well--defined rotation number, that is, backward and forward rotation numbers exist and coincide
then they are equal to the prime end rotation number.
Our discussion on the asymptotic behavior of accessible points goes a bit further.
The results obtained in this work are summarized in the next theorem.

\begin{theorem}\label{thm:resumen}
Under the notation and the hypothesis of the previous theorem:
\begin{enumerate}
\item \label{item:1} There are no accessible points $p, q \in \X$ such that $\O^-(p)$ and $\O^+(q)$ are bounded and
$$
\lim_{n \to -\infty} (F^n(p))_1 - n\rho = - \infty\enskip \text{and} \enskip \lim_{n \to +\infty} (F^n(q))_1 - n\rho = + \infty.
$$
$$
\text{or} \enskip \lim_{n \to -\infty} (F^n(p))_1 - n\rho = + \infty\enskip \text{and} \enskip \lim_{n \to +\infty} (F^n(q))_1 - n\rho = - \infty.
$$
\item \label{item:2}  Suppose that the sequences $\{(F^n(p))_1 - n \rho\}_{n \le 0}$
and $\{(F^n(q))_1 - n \rho\}_{n \ge 0}$ are unbounded for some accessible points $p, q \in \X$ such that $\O^-(p)$ and $\O^+(q)$ are bounded. Then:
\begin{enumerate}
\item \label{item:a} Both sequences $\{(F^n(p))_1\}_{n \le 0}$ and $\{(F^n(q))_1\}_{n \ge 0}$ have the form $n\rho + o(|n|)$.
Equivalently, the backward rotation number of $p$ is equal to the forward rotation number of $q$ and they are both equal to $\rho$:
$$\lim_{n \to -\infty} (F^n(p))_1/n = \lim_{n \to +\infty} (F^n(q))_1/n = \rho.$$
\item \label{item:b} If $\rho \in \Q$ then the closure of both the projection onto $\A$ of $\O^-(p)$ and of $\O^+(q)$
contain a periodic point whose rotation number is $\rho$.
\item \label{item:c} If $\rho \in \Q$ and $\{(F^n(q))_1 - n \rho\}_{n \ge 0}$ converges either to $-\infty$ or to $+\infty$
then any limit point of the projection onto $\A$ of the forward orbit of $q$ is periodic and has rotation number $\rho$.
An analogous statement holds for $p$ and the projection of $\O^-(p)$.
\end{enumerate}
\end{enumerate}
\end{theorem}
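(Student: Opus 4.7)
The plan is to first establish item \ref{item:1}, which carries the main new topological content, then to deduce item \ref{item:a} from Theorem \ref{thm:dicotomia} combined with \ref{item:1}, and finally to obtain items \ref{item:b} and \ref{item:c} by combining \ref{item:a} with the realization results for rational rotations quoted in the introduction.

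For \ref{item:1} I would argue by contradiction. Suppose there exist accessible $p, q \in \X$ with bounded $\O^-(p), \O^+(q)$ and, say, $(F^n(p))_1 - n\rho \to -\infty$ as $n \to -\infty$ together with $(F^n(q))_1 - n\rho \to +\infty$ as $n \to +\infty$. Fix access arcs $\alpha_p, \alpha_q$ in $U = \A \setminus X$ landing at the projections of $p$ and $q$ and lift them into $\AA$. The drift hypotheses force the iterates $F^n(\alpha_p)$ to migrate arbitrarily far in the negative angular direction as $n \to -\infty$ and $F^n(\alpha_q)$ arbitrarily far in the positive direction as $n \to +\infty$, while boundedness of the orbits in $\A$ keeps the diameters of these arcs controlled modulo deck translations. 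Combining the two runaway families should yield a topological obstruction: essentially, an invariant annular region in the prime end compactification whose induced rotation is strictly different from $\rho$, contradicting $\widehat{\rho}(F,X)=\rho$. The cleanest implementation would pass to crosscuts associated to the access arcs and apply a Brouwer--translation type argument in $\AA$, exploiting the identification of the prime end rotation number with a rotation in the universal cover of $\P(U)$. This is the step I expect to be the main obstacle, as it requires converting the asymptotic angular drift of an accessible point in $\partial U$ into a drift of its associated prime end and carefully using the half-open annular topology of $U$ to rule out the configuration.

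For \ref{item:a}, invoke Theorem \ref{thm:dicotomia}: without loss of generality every accessible point with bounded backward semi-orbit has backward rotation number $\rho$, so $(F^n(p))_1 = n\rho + o(|n|)$ as $n \to -\infty$. For the analogous statement about $q$, assume by contradiction that $q$'s forward rotation number is not $\rho$; then $(F^n(q))_1 - n\rho$ tends linearly to $+\infty$ or $-\infty$, which via \ref{item:1} forces $p$'s backward drift not to tend to the opposite infinity. A further extraction argument from the orbit $\O^-(p)$, exploiting its boundedness in $\A$ and the unboundedness of $\{(F^n(p))_1 - n\rho\}_{n \le 0}$, should yield an accessible point $p'$ realizing a genuine limit along a subsequence, which together with $q$ produces the configuration forbidden by \ref{item:1}.

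For items \ref{item:b} and \ref{item:c}, apply \ref{item:a}. The projection to $\A$ of $\overline{\O^-(p)}$ is a compact $f$-invariant subset of $X$ containing $\rho$ in its rotation interval; a Krylov--Bogolyubov averaging produces an $f$-invariant ergodic measure on it with rotation $\rho$, and the rationality of $\rho$ combined with the realization theorems of Barge--Gillette and Koropecki applied to an essential annular sub-continuum yields a periodic point of rotation $\rho$ in the closure, proving \ref{item:b}. For \ref{item:c}, the stronger assumption that $(F^n(q))_1 - n\rho \to \pm\infty$ forces every accumulation point of $\O^+(q)$ to itself have drift tending to the same infinity under iteration; in the rational case, combined with \ref{item:1}, this rigidity implies that every such accumulation point is periodic of rotation $\rho$.
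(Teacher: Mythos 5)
Your plan has a fundamental logical problem and several genuine gaps beyond the sketchy item~(\ref{item:1}).

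The most serious issue is circularity: you propose to derive item~(\ref{item:a}) by invoking Theorem~\ref{thm:dicotomia}, but the paper presents Theorem~\ref{thm:dicotomia} as a \emph{corollary} of Theorem~\ref{thm:resumen}~(\ref{item:a}). Even if Theorem~\ref{thm:dicotomia} could be proved independently, it only settles one of the two semi-orbits (it says either all backward or all forward accessible rotation numbers equal $\rho$), so you would still need a separate argument for the other.

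Second, item~(\ref{item:a}) cannot be reduced to item~(\ref{item:1}) plus an extraction. The paper explicitly warns against this: Theorem~\ref{thm:counterexamplebouncing} exhibits dynamics where an accessible point has \emph{unbounded} drift in both time directions without any transverse configuration, so the obstruction of item~(\ref{item:1}) simply does not generalize to unbounded drifts. Your ``further extraction argument'' also breaks down because accumulation points of orbits of accessible points need not be accessible, so you cannot produce a new accessible point $p'$ whose backward drift has a genuine limit. The paper handles this via the free-disk machinery (Propositions~\ref{prop:notinfront} and~\ref{prop:haciaatrasrotanigual}, packaged into Theorems~\ref{thm:main} and~\ref{thm:main2} under hypotheses (H1)--(H2)), which gives the much stronger conclusion that the backward deviation of \emph{every} accessible point is actually bounded; that bound contradicts the unboundedness hypothesis on $p$ directly, with no extraction.

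Third, for items~(\ref{item:b}) and~(\ref{item:c}) the Krylov--Bogolyubov plus Barge--Gillette/Koropecki route does not go through: those realization theorems are stated for cofrontiers or circloids (essential annular continua), while $\overline{\O^-(p)}$ projected to $\A$ is an arbitrary compact invariant set; and an ergodic measure with rational rotation number does not produce a periodic point in two dimensions. The paper instead derives~(\ref{item:b}) and~(\ref{item:c}) by negating hypothesis~(H2) (resp.~(H2')): since Theorem~\ref{thm:main} would otherwise force boundedness of the backward drift of $p$ and contradict the hypothesis, (H2) must fail, and failure of~(H2) for $\rho=a/b$ exactly says that the accumulation points of the (normalized) forward orbit of $q$ are fixed by $T^{-a}F^b$, hence periodic of rotation $\rho$.

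Finally, your sketch for item~(\ref{item:1}) (``invariant annular region of different rotation, Brouwer-translation argument'') is substantially different from the paper's relative winding number argument (Section~\ref{sec:windingnumber}, Lemma~\ref{lem:updownprimeends}, Proposition~\ref{prop:toofar}) and is left too vague to assess; in particular it is not clear how two runaway families of crosscuts would give an $\F$-invariant interval of prime ends.
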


Item (\ref{item:1}) in Theorem \ref{thm:resumen} deals with the case we named ``transverse'' drift:
the forward and backward semi--orbit drift from the expected one in different directions.
This would be precisely the case of a point $x \in \X$ whose full orbit has a well--defined rotation number $\rho_x = \lim_{n \to \pm\infty} (F^n(x))_1/n \in \R$
different from $\widehat{\rho}(F, X)$. Theorem \ref{thm:resumen} (\ref{item:1}) concludes that $x$ is not accessible.
The topological notion of relative winding number has been developed ad--hoc to give a neat proof of this statement.

Theorem \ref{thm:resumen} (\ref{item:2}) deals with the general case. It is proved in Section \ref{sec:mainresults}.
There, extra hypothesis (H1) and (H2) are introduced to rule out
the coexistence of forward and backward semi--orbits drifting from the expected one. These hypothesis are
automatically satisfied when one of the drifts is, at least, linear or the semi--orbits do not accumulate entirely
in the set of periodic points with rotation $\rho$ provided $\rho = \widehat{\rho}(F, X) \in \Q$.

The following result is an easy corollary of Theorem \ref{thm:resumen} (\ref{item:a}) and, in turn, yields Theorem \ref{thm:dicotomia} as
a corollary as well.

\begin{corollary}
Suppose $\{(F^n(p))_1 - n \rho\}_{n \le 0}$ is unbounded for some accessible point $p \in \X$ with bounded backward semi--orbit.
Then, every accessible point $q \in \X$ with bounded forward semi--orbit has well--defined forward rotation number equal to $\rho$, i.e.
$$\lim_{n \to +\infty} (F^n(q))_1/n = \rho.$$

Similarly, if the sequence $\{(F^n(q))_1 - n \rho\}_{n \ge 0}$ is unbounded for some accessible point $q \in \X$ with $\O^+(q)$ bounded
then $\lim_{n \to -\infty} (F^n(p))_1/n$ exists and is equal to $\rho$ for every accessible point $p \in \X$
such that $\O^-(p)$ is bounded.
\end{corollary}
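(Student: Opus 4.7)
The corollary looks like a routine case-split on top of Theorem \ref{thm:resumen}(\ref{item:a}), so I would not reopen the main machinery; instead I would simply organize the two alternatives for the behavior of $\{(F^n(q))_1 - n\rho\}_{n\ge 0}$ and handle each one separately. Fix an accessible point $q \in \X$ with bounded forward semi--orbit and look at the sequence
\[
a_n = (F^n(q))_1 - n\rho, \qquad n \ge 0.
\]
My plan is to show $\lim_{n\to +\infty} (F^n(q))_1/n = \rho$ by distinguishing whether $\{a_n\}_{n\ge 0}$ is bounded or unbounded.

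In the bounded case, there is nothing to prove: if $|a_n| \le C$ for all $n \ge 0$, then
\[
\left|\frac{(F^n(q))_1}{n} - \rho\right| \le \frac{C}{n} \xrightarrow[n\to+\infty]{} 0,
\]
which gives exactly the conclusion. So the only substantive case is the unbounded one.

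In the unbounded case, the accessible point $p$ from the hypothesis, whose backward orbit is bounded and for which $\{(F^n(p))_1 - n\rho\}_{n\le 0}$ is unbounded, together with our chosen $q$, exactly meets the hypothesis of Theorem \ref{thm:resumen}(\ref{item:2}). Applying part (\ref{item:a}) of that item gives directly
\[
\lim_{n\to+\infty} \frac{(F^n(q))_1}{n} = \rho,
\]
finishing this case as well.

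The second half of the corollary follows by the symmetric argument, swapping the roles of forward and backward time and of $p$ and $q$: for any accessible $p$ with $\O^-(p)$ bounded, the sequence $\{(F^n(p))_1 - n\rho\}_{n\le 0}$ is either bounded (and the limit $(F^n(p))_1/n \to \rho$ is immediate) or unbounded (and Theorem \ref{thm:resumen}(\ref{item:2})(\ref{item:a}) applies with the assumed $q$). There is no real obstacle here since all the work is buried in Theorem \ref{thm:resumen}(\ref{item:a}); the only thing to be careful about is to make the trivial bounded--drift case explicit, because Theorem \ref{thm:resumen}(\ref{item:a}) is stated under the unboundedness assumption on both sequences and cannot be invoked directly when $\{a_n\}$ happens to stay bounded.
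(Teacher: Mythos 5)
Your proof is correct and is precisely the ``easy corollary'' argument the paper has in mind (the paper gives no written proof and merely asserts the deduction from Theorem~\ref{thm:resumen}(\ref{item:a})). The case split on boundedness of $\{(F^n(q))_1 - n\rho\}_{n\ge 0}$ is exactly the right thing to do, since the trivial bounded case does not fall under the hypotheses of Theorem~\ref{thm:resumen}(\ref{item:2}), and your symmetric treatment of the second half is likewise fine.
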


Another corollary of the results proved in this article has been suggested by A. Koropecki:

\begin{corollary}\label{cor:rhorealizado}
Let $f \colon \overline{\A} \to \overline{\A}$ be a homeomorphism isotopic to the identity of the closed annulus. As usual, denote $X$ an invariant continuum such that $\overline{\A} \setminus X$ is homeomorphic to $\A$, $F$ and $\X$ lifts of $f$ and $X$ to the universal cover and $\rho$ the prime end rotation number of $F$ in $X$.

Then, there exists a point $q$ in the boundary of $\X$ whose forward rotation number is equal to $\rho$. Furthermore, there exist an ergodic $f$--invariant Borel probability measure supported in $\partial X$ whose rotation number is $\rho$.
\end{corollary}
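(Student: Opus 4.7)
The compactness of $\overline{\A}$ makes the boundedness hypothesis in Theorem \ref{thm:dicotomia} automatic, so the dichotomy applies: either every accessible point of $X$ has backward rotation number $\rho$, or every accessible point of $X$ has forward rotation number $\rho$. Accessible points in $\partial X$ exist (and are in fact dense there) by classical prime end theory applied to the annular complement $\overline{\A} \setminus X$, and since $f$ is a homeomorphism $\partial X$ is closed and $f$-invariant.

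Fix an accessible $x \in \partial X$ whose forward or backward semi--orbit has rotation number $\rho$ and consider the empirical measures $\mu_n := \frac{1}{n}\sum_k \delta_{f^{\epsilon k}(x)}$ along that semi--orbit, with $\epsilon = \pm 1$ chosen accordingly. A telescoping computation on a lift $\tilde x$ gives $\int u \, d\mu_n = \bigl((F^{\epsilon n}(\tilde x))_1 - (\tilde x)_1\bigr)/(\epsilon n) \to \rho$ for the continuous displacement $u$. Since $\partial X$ is closed and $f$-invariant, $\supp \mu_n \subseteq \partial X$; a weak--$*$ subsequential limit $\mu$ is $f$-invariant by the standard Krylov--Bogolyubov argument (the signed measure $f_*\mu_n - \mu_n$ has total variation $O(1/n)$), satisfies $\int u \, d\mu = \rho$ by continuity of $u$, and has $\supp \mu \subseteq \partial X$. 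This already produces an $f$-invariant Borel probability on $\partial X$ with rotation number $\rho$.

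The main obstacle is to refine $\mu$ into an \emph{ergodic} measure of rotation number exactly $\rho$: the ergodic decomposition $\mu = \int \nu \, d\tau(\nu)$ only forces $\int R(\nu) \, d\tau = \rho$ for $R(\nu) := \int u \, d\nu$, and $R$ need not attain $\rho$ on $\supp \tau$. I would separate cases by the arithmetic of $\rho$. When $\rho \in \Q$, Matsumoto's theorem \cite{matsumoto} places $\rho$ in the rotation interval of $X$, and the periodic orbit realization results of Koropecki \cite{koropecki} and Koropecki--Passeggi \cite{koropasseggi} furnish a periodic orbit in $\partial X$ of rotation number $\rho$; its orbital Dirac measure is ergodic with rotation $\rho$. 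When $\rho \notin \Q$, one passes to a minimal $f$-invariant subset $M \subseteq \supp \mu$ and argues, using Theorem \ref{thm:dicotomia} applied in both time directions to rule out accessible points whose semi--orbit rotations differ from $\rho$, that every ergodic measure on $M$ must have rotation $\rho$; the careful execution of this step is the chief technical difficulty.

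Once an ergodic $\nu$ on $\partial X$ with $\int u \, d\nu = \rho$ is secured, Birkhoff's ergodic theorem applied to the continuous $u$ yields a $\nu$-full measure set of $\bar q \in \partial X$ at which $\frac{1}{n}\sum_{k=0}^{n-1} u(f^k(\bar q)) \to \rho$, equivalently $(F^n(q))_1/n \to \rho$ for any lift $q \in \partial \X$ of $\bar q$. Any such $q$ is the desired point, completing both assertions of the corollary.
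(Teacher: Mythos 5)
Your construction of an $f$-invariant measure on $\partial X$ with rotation number $\rho$ via empirical averages along a semi-orbit of an accessible point is sound, but the passage to an \emph{ergodic} measure of rotation number exactly $\rho$ is the crux of the second claim, and the two routes you sketch both have genuine problems. For $\rho\in\Q$, the realization theorems of Koropecki and Koropecki--Passeggi do not, without further hypotheses, produce a periodic point in $\partial X$: Koropecki's theorem is for circloids, and the Koropecki--Passeggi result yielding a periodic point in $\partial X$ requires a non-degenerate rotation interval, which is not given. For $\rho\notin\Q$, you propose to pass to a minimal $M\subseteq\supp\mu$ and invoke Theorem~\ref{thm:dicotomia}, but that theorem only constrains \emph{accessible} points, and the points of $M$ need not be accessible, so the dichotomy gives no control over rotation numbers of ergodic measures supported on $M$. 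Since you then derive the first statement from the (missing) ergodic measure via Birkhoff, the gap infects both halves of the corollary.

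The paper proves the first statement directly and does not route through the measure at all: arguing by contradiction, if some accessible $q_0$ had $\limsup (F^n(q_0))_1/n > \rho$, then Lemma~\ref{lem:linealdriftH1H2} supplies (H1)--(H2) and Theorem~\ref{thm:main} forces every accessible $p$ to have bounded backward deviation $|(F^n(p))_1-(p)_1-n\rho|<C$ for $n\le 0$; taking any $q$ in the $\alpha$-limit set of such a $p$, a telescoping estimate shows $q$ has bounded forward deviation and hence forward rotation number $\rho$, contradicting the assumption. For the ergodic measure, the paper instead invokes Handel's theorem (as improved by Koropecki) to obtain an ergodic measure on the filled continuum $K$ with rotation $\rho$, and then, in the case that measure charges a complementary domain $D$ of $\partial U$, shows $D$ is periodic and $\rho\in\Q$, at which point the detailed structure theorem of \cite{koropasseggi} (either a periodic point in $\partial U$, or a basin decomposition by rotational attractors/repellers whose boundary measures have rotation $\rho$) delivers the ergodic measure on $\partial U$. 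Your overall strategy is inverted relative to the paper's, and the inversion forces you into the hard case-analysis precisely where your argument is incomplete.
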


Recall that (for a fixed choice of lift $F$) the rotation number of an $f$--invariant measure $\mu$ is $\rho(F, \mu) = \int_A u \,d\mu$, where $u$ denotes the displacement function defined above.

The choice of domain in which the dynamics is placed, typically the half--open annulus $\A$, is not very relevant, as we discuss in the next section. Prime ends and related concepts are introduced in Section \ref{sec:primeends}. The notion of relative winding number between arcs and between prime ends is explained in Section \ref{sec:windingnumber} and used in the subsequent section to discuss the case of accessible periodic points.
The rotational horseshoe is described in Section \ref{sec:horseshoe} as an example in which
accessible points may not have well--defined forward rotation number.

Sections \ref{sec:transverse} and \ref{sec:mainresults} are the core of the paper. They contain the proof of Theorem \ref{thm:resumen}; more precisely, the proofs of (\ref{item:1}) and (\ref{item:2}), respectively. In Section \ref{sec:transverse}, the relative winding number is exploited successfully in a short argument that addresses (\ref{item:1}). The latter section needs more preliminary technical work until we reach the heart of the proof: Propositions \ref{prop:notinfront} and \ref{prop:haciaatrasrotanigual}. It ends with a proof of Corollary \ref{cor:rhorealizado}. The sharpness of the results is supported by the constructions described by Theorems \ref{thm:exampletransversal} and \ref{thm:counterexamplebouncing}.

\section{Setting}\label{sec:setting}

The half--open annulus $S^1 \times (-\infty, 0]$ is denoted $\A$. The one point compactification of $\A$, obtained by adding the
point $e^-$ associated to the lower end of the annulus, is homeomorphic to the closed unit disk $\overline{\D}$.
The map $\pi\colon \AA = \R \times (-\infty, 0] \to \A$ which sends
$(\widetilde{\theta}, r)$ to $(e^{2\pi i \widetilde{\theta}}, r)$ is a universal covering of $\A$.
The dynamics is generated by an orientation--preserving homeomorphism $f\colon \A \to \A$.
This map is automatically isotopic to the identity and restricts to an orientation--preserving circle homeomorphism in $\partial \A$.
Any lift $F \colon \AA \to \AA$ of $f$ is also orientation--preserving and isotopic to the identity.
The article focuses on the dynamics of an $f$--invariant non--empty closed set $X \subset \A$ such that
$U = \A \setminus X$ is homeomorphic to $\A$.
The reason for these assumptions is that we can then compactify $U$ with a circle (of prime ends) as is described in Section \ref{sec:primeends}.
An equivalent characterization of $X$ would be: $X$ is a closed proper subset of $\A$
that is adherent to $e^-$, does not meet $\partial \A = S^1 \times \{0\}$,
does not have compact connected components and such that $\A \setminus X$ is connected.

The choice of the half--open annulus as workplace is not very relevant.
Any dynamics in the closed annulus easily fits in our setting. If $g$ is a homeomorphism of $S^1 \times [-1, 0]$ isotopic to the identity
we can insert the dynamics it generates in $\A$ and trivially extend it to the whole annulus. If $K$ is a $g$--invariant continuum
that does not intersect $S^1 \times \{0\}$ and
such that its complement is homeomorphic to $\A$ then $X = K \cup S^1 \times (-\infty, -1]$ satisfies
the same properties in the half--open annulus.

Suppose now that $h$ is a homeomorphism of the open annulus $S^1 \times \R$ isotopic to the identity and $X$ is a closed
$h$--invariant set which is adherent to the lower end $e^-$ but not to the upper end.
Then, for large $r$ we can take a global isotopy $\{I_t\}_{t=0}^1$, $I_0 = \id$,
which leaves a neighborhood of $X$ fixed and satisfies $I_1(h(S^1 \times \{r\})) = S^1 \times \{r\}$.
In order to examine dynamical properties of $X$
we can cut off $S^1 \times [r, +\infty)$ and work with the map $I_1 \circ h$, which is equal to $h$ in a neighborhood of $X$.

Let $K$ be a non--separating planar continuum invariant under an orientation--preserving planar homeomorphism $f$.
Suppose $K$ contains at least two points.
Again, composing $f$ with a suitable isotopy of $\R^2$ which is equal to the identity around $K$, we can assume $f$ leaves invariant
a large disk that contains $K$. The complement of $K$ in that disk is homeomorphic to $\A$.
By Cartwright--Littlewood Theorem \cite{cartwright}, $K$ contains a fixed point $p$.
We can puncture the disk at $p$ and work with the induced dynamics in the resulting half--open annulus
that leaves invariant the closed set $X = K \setminus \{p\}$.

An important remark is due in order to genuinely accept the previous constructions in our work.
The prime end rotation number associated to these sets remains invariant under the modifications proposed.
This is a direct consequence of the definitions
in Section \ref{sec:primeends} because the set of prime ends is not modified in any step.
A general argument addressing this technical details on prime ends can be found in Section 3 of \cite{duke}.

Even though we already saw that some other typical settings adjust well to the half--open annulus,
there is an important issue concerning $\A$ which must be addressed:
the rotation number of orbits that diverge towards $e^-$ may not be well--defined.
The problem is that a change of angular coordinate in $\A$ leads to a different value of the limit that defines the rotation number.
For example, consider the
fibered rotation $h_{\varphi} \colon (\theta, r) \mapsto (\theta + \varphi(r), r)$ in $\A$. Let $p \in \A$ be a point
whose forward orbit under a map $f$ converges to the lower end. If $\varphi$ is not bounded,
the rotation number of $p$ under $f$ and the rotation number of $h_{\varphi}(p)$ under
$h_{\varphi} \circ f \circ h_{\varphi}^{-1}$ may be different. A clear exposition of this phenomenon
and how to navigate through it can be found in Le Roux \cite{leroux}.

The previous paragraph explains why we will only consider orbits which stay away from the lower end of $\A$.
Obviously, this is true for every orbit when we insert a dynamics of the closed annulus into $\A$.
However, in general we require a compactness hypothesis on the orbits under consideration.

\begin{definition}
Let $F \colon \AA \to \AA$ be a lift of an orientation--preserving homeomorphism of $\A$.
An orbit or a semi--orbit under the action of $F$ is said to be \emph{bounded} if its projection onto $\A$ is relatively compact or,
equivalently,  if it is contained in the lift of a closed annulus in $\A$.
\end{definition}

Our work concerns accessible points and prime ends associated to $X$.
These objects are defined from the complement of $X$ so, in a certain way, the spirit of this paper is to
study the dynamics of an invariant set from the outside.
For this purpose, we will constantly make use of arcs in $\A \setminus X$.
In this work, the term \emph{arc} (resp.\emph{ half--open arc}) is used to refer to injective maps $\gamma\colon I \to \A$,
where $I = [0,1]$ (resp. $I = [0,1)$),
but mainly also to refer to their images $\gamma(I)$, which will be usually denoted $\gamma$ as well.

\section{Theory of prime ends}\label{sec:primeends}

Carath\'eodory proved that a conformal map $\phi\colon \D \to V$ extends to a homeomorphism between $\overline{\D}$ and $\overline{V}$
if and only if $\partial V$ is locally connected.
As part of his research on conformal mappings, he presented the theory of prime ends in \cite{caratheodory}.
To any given open, connected and simply connected subset $V \subset S^2$ such that $S^2 \setminus V$ contains at least two points
one can associate the set of prime ends $\P(V)$ and the union $V \sqcup \P(V)$ is given a topology that makes it homeomorphic to
a closed disk.
The set $\P(V)$ equipped with the subset topology is then homeomorphic to a circle and is frequently referred to as
the circle of prime ends of $V$.
A complete account on the theory from the viewpoint of conformal mappings is found in \cite{col}\cite{pommerenke}.

Mather presented in \cite{mather} a purely topological approach to prime ends
that works for more general open subsets on surfaces.
Following \cite{mather},
the theory of prime ends will be now briefly presented in the particular case
where the open set $U$ is a proper subset of the half--open annulus $\A$,
contains $\partial \A$ and is homeomorphic to $\A$. Our setting essentially fits the original one because after adding
the lower end of $\A$ and a disk capping $\partial \A$ we are left with a connected open and simply connected domain $V$ of $S^2$ whose complement
contains more than one point. It is easy to see that the set of prime ends associated to $U$ as will be defined below
is equal to the classical circle of prime ends associated to $V$.

 A \emph{cross--cut} $c$ of $U$
is an arc whose endpoints lie in $X = \A \setminus U$ and is otherwise contained in $U$.
The cross--cut $c$ splits $U$ into two connected components,
one of them, $V(c)$, is homeomorphic to an open disk and the other one is homeomorphic to $\A$.
A sequence $\{c_n\}_{n = 1}^{\infty}$ of pairwise disjoint cross--cuts such that $V(c_{n+1}) \subset V(c_n)$
is called a \emph{chain} (of cross--cuts). A chain $\{c'_n\}_n$ is said to divide another chain $\{c_n\}_n$ if for every $i$
there exists $j$ such that $V(c'_j) \subset V(c_i)$. Two chains are \emph{equivalent} if any of them divides the other.
A \emph{prime end} $\p$ is an equivalence class in the set of chains that contains a minimal element, i.e. a
chain $\{c_n\}_n$ such that any other chain that divides $\{c_n\}_n$ is equivalent to $\{c_n\}_n$ (hence it belongs to the $\p$).
The set of prime ends of $U$ is denoted $\P(U)$.

Given a cross--cut $c$ of $U$, denote $\widehat{V}(c)$ the union of $V(c)$ and the set of prime ends represented by
chains $\{c_n\}_n$ such that $V(c_n) \subset V(c)$ for every $n$. The disjoint union $U \sqcup \P(U)$ is given a topology
whose basis is composed of the open subsets of $U$ and the sets of the form $\widehat{V}(c)$ for some cross--cut $c$ of $U$.
The main theorem in the theory adapted to this setting states that the topological space $\widehat{U} = U \sqcup \P(U)$,
called the \emph{prime end compactification} of $U$, is homeomorphic to the closed annulus.

\subsection{Accessible prime ends}

The \emph{principal set} $\Pi(\p)$ of a prime end $\p$ consists of the points which are the limit of a sequence of cross--cuts
whose diameter tends to 0 in a chain that represents $\p$.

As stated in the introduction,
a point $p \in \partial X = \partial U$ is said to be \emph{accessible} (from $U$) if there exists an arc $\gamma\colon [0,1] \to U \cup \{p\}$
such that $\gamma[0,1) \subset U$ and $\gamma(1) = p$. Accessible points are dense in $\partial U$.
From the definition of prime end it can be shown that $\gamma$ determines a unique prime end $\p$
in the sense that given any representative $\{c_n\}$ of $\p$, $\gamma^{-1}(V(c_n))$ contains an interval of the form $(t, 1)$.
Furthermore, $\p$ satisfies $\Pi(\p) = \{p\}$.
Conversely, if $\Pi(\p) = \{p\}$ then $p$ is accessible from $U$: it is possible to construct an arc $\gamma\colon [0,1] \to U \cup \{p\}$ such
that $\gamma^{-1}(p) = \{1\}$ and the prime end determined by $\gamma$ is $\p$.
In this case we say $p$ is the \emph{principal point} of $\p$.
The previous equivalence explains the reason why prime ends for which their principal set is a singleton
will be called \emph{accessible} throughout this paper. In the literature they are
often called prime ends of first or second kind (depending on whether their impression $\cap_n\overline{V(c_n)}$ is a singleton or not
for any chain $\{c_n\}$ that represents it).
Accessible prime ends form a dense set in $\P(U)$.
Indeed, a basis of the topology of $\P(U)$ is given by the sets of the form $\widehat{V}(c) \cap \P(U)$, where $c$ is a cross--cut of $U$.
To construct an accessible prime end in these sets it suffices to take any closed arc completely contained in $\widehat{V}(c)$
that determines a prime end in the sense explained above.
Notice that an accessible point in the boundary of $U$ may determine several different accessible prime ends.

Given a half--open arc $\gamma \colon [0,1) \to U$, we can both view it as an arc in $U \subset \A$
and as an arc in the prime end compactification of $U$, $\widehat{U}$. This perspective yields an
alternative statement to the result used in the previous paragraph to compare accessible points and accessible prime ends.
If $\gamma(t)$ converges as $t \to 1$ to a point $p$ in $X$ then it can also be extended as an arc in the prime end compactification:
there is a $\p \in \P(U)$ such that $\gamma(t)$ as viewed in $\widehat{U}$ converges to $\p$ as $t \to 1$.
In this case, $\p$ is an accessible prime end whose principal point is $p$.
However, in general the converse is not true: not every half--open arc in $U$ that converges to an accessible prime end can be extended to
a closed arc in $\A$.

\subsection{Line of prime ends}\label{subsec:lineend}

Recall that $\AA = \R \times (-\infty, 0]$, $\pi \colon \AA \to \A$ is the universal cover
and write $\X = \pi^{-1}(X)$, $\widetilde{U} = \pi^{-1}(U)$.
The goal of this subsection is to lift the prime end compactification obtained for $U \subset \A$ to the universal cover $\widetilde{U} \subset \AA$.
The result will be an infinite band bounded by $\partial \AA$ and a line composed of what will be called prime ends of $\widetilde{U}$,
denoted $\L(U)$.
It is important to remark that $\L(U)$ is not the result of applying prime end theory directly to $\widetilde{U} \subset \AA$,
should it be somehow possible, but, instead, $\L(U)$ contains the lifts of genuine prime ends of $U$.

Some of the previous terminology is now redefined in this new setting.
Define a cross--cut $\widetilde{c}$ of $\widetilde{U}$ as the lift to $\AA$ of a cross--cut $c$ of $U$.
Then, $\widetilde{c}$ separates $\widetilde{U} = \AA \setminus \X$ in two components, only one of which does not contain $\partial \A$,
say $\widetilde{V}(\widetilde{c})$, and $\pi^{-1}(V(c)) = \cup_{\pi(\widetilde{c}) = c} \widetilde{V}(\widetilde{c})$.
Then, we define $\L(U)$ as the set of minimal equivalence classes of chains of cross--cuts.
The notation chosen will be explained later.
The elements of $\L(U)$ will be called prime ends of $\widetilde{U}$
although they are not obtained by applying prime end theory to $\widetilde{U}$.

The union $\widetilde{U} \sqcup \L(U)$ is given a topology through a basis composed of the open sets of $\widetilde{U}$ and the sets of the form $\widehat{\phantom{\,}\widetilde{V}\phantom{\,}}(\widetilde{c})$ defined as the union of $\widetilde{V}(\widetilde{c})$ and the
subset of prime ends represented by chains $\{\widetilde{c}_n\}_n$ such that $\widetilde{V}(\widetilde{c}_n) \subset \widetilde{V}(\widetilde{c})$ for every $n$.
Since $\pi$ restricted to $\widetilde{V}(\widetilde{c})$ is injective, it induces
a covering map $\widehat{\pi}\colon \L(U) \to \P(U)$.
Moreover, the generator of the deck transformations $T\colon \AA \to \AA$ given by $T(\widetilde{\theta}, r) = (\widetilde{\theta} + 1, r)$ also
induces a generator of the deck transformations $\widehat{T}\colon \L(U) \to \L(U)$.
Therefore, $\widehat{\pi}\colon \L(U) \to \P(U)$ is a universal cover
and we call $\L(U)$ the \emph{line of prime ends} of $\widetilde{U}$, $\L(U)$ is homeomorphic to $\R$.
The set $\widehat{\phantom{\,}\widetilde{U}\phantom{\,}} = \widetilde{U} \cup \L(U)$ will be called
\emph{prime end closure} of $\widetilde{U}$.
Let us remark the fact that the prime end closure is not compact.
Topologically, it is an infinite closed band whose boundary components are $\partial \AA$
and $\L(U)$.

The notions of accessible prime end and arc that determines a prime end translate verbatim to $\L(U)$ and $\widetilde{U}$.
Henceforth, we will work with a special class of arcs:

\begin{definition}
An arc $\gamma\colon [0,1] \to \AA$
 is called a \emph{hanging arc} provided $\gamma(t) \in \partial \AA =  \R \times \{0\}$ if and only if $t = 0$.
The point $\gamma(1)$ is said to be the \emph{landing point} of $\gamma$.

A hanging arc $\gamma \colon [0,1] \to \AA$ such that $\gamma[0,1) \subset \widetilde{U}$ and $\gamma(1) \in \X$ is called \emph{lead line}.
\end{definition}

The term hair was used in \cite{nagoya} instead of lead line, but we think \emph{lead line} expresses better the idea
of an arc used to thoroughly examine all hidden cavities and corners of $\widetilde{U}$.
An example of lead line is $\gamma_{\p}$ from Figure \ref{fig:primeends}.
Note that a lead line determines a prime end in $\L(U)$ and separates $\widetilde{U}$ in exactly two connected components.

\begin{figure}[htb]
\centering
\begin{tabular}{l c r}
\includegraphics[scale = 0.7]{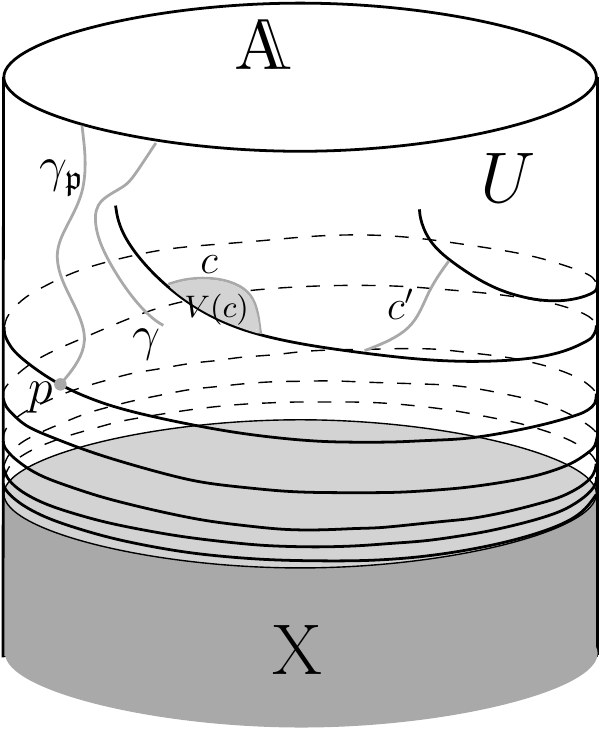} & \hspace{1cm} & \includegraphics[scale =
0.7]{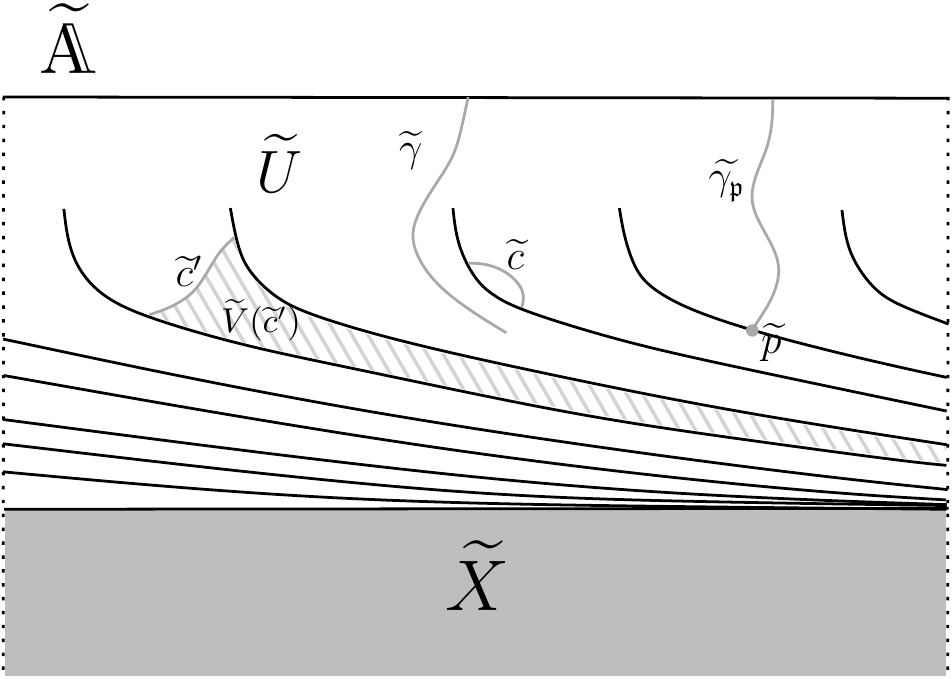}
\end{tabular}
\caption{$X$ is composed of two strings wrapped around $\A$ that accumulate in a limit circle and the cylinder left below (in dark gray).
$\gamma_{\p}$ is a lead line that lands at $p$, $\gamma$ is a hanging arc.
Cross--cuts $\widetilde{c}, \widetilde{c}'$ are lifts of $c, c'$. Note that $\widetilde{V}(\widetilde{c})$
is compact, whereas $\widetilde{V}(\widetilde{c}')$ is not.}
\label{fig:primeends}
\end{figure}

The next lemma contain some results that are analogues of classical facts of prime end theory, see Mather \cite{mather}.

\begin{lemma}\label{lem:primeends1}
\begin{enumerate}
\item \label{item:leadlinesdisjoint} Given any two different prime ends $\p, \q \in \L(U)$,
there are lead lines $\gamma_{\p}, \gamma_{\q}$ that determine $\p$ and $\q$,
 respectively, and are disjoint (except for their landing points in the case $\Pi(\p) = \Pi(\q)$).
\item \label{item:homotopicleadlines} Any pair of lead lines $\gamma, \gamma'$ that determine
the same prime end $\p$ are homotopic through lead lines that also
determine $\p$.
\end{enumerate}
\end{lemma}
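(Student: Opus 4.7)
\textbf{Proof proposal for Lemma \ref{lem:primeends1}.}
For part (\ref{item:leadlinesdisjoint}), the plan is to exploit the fact that $\p \neq \q$ in order to separate them by disjoint cross-cut neighborhoods downstairs, lift the picture to $\AA$, and then build each lead line entirely inside its own neighborhood. Pick representatives $\{c_n^{\p}\}_n$ and $\{c_n^{\q}\}_n$ of the two prime ends; because the chains are inequivalent, there exists $n$ with $V(c_n^{\p})\cap V(c_n^{\q})=\emptyset$. Choose lifts $\widetilde{c}_n^{\p},\widetilde{c}_n^{\q}$ so that $\widetilde{V}(\widetilde{c}_n^{\p})$ and $\widetilde{V}(\widetilde{c}_n^{\q})$ are the components of $\pi^{-1}(V(c_n^{\p}))$ and $\pi^{-1}(V(c_n^{\q}))$ neighbouring $\p$ and $\q$ respectively; these are disjoint in $\AA$. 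Inside $\widetilde{V}(\widetilde{c}_n^{\p})$ build an arc from $\widetilde{c}_n^{\p}$ to a point $p\in\Pi(\p)$ by using that every principal point is a limit of cross-cuts in $\{\widetilde{c}_m^{\p}\}_{m\geq n}$ whose diameters can be made arbitrarily small, so a concatenation of short arcs connecting consecutive cross-cuts lands at $p$. Do the same for $\q$. Finally, prepend disjoint paths in $\widetilde{U}$ from two distinct points of $\partial\AA$ to the two cross-cuts, which exists because $\widetilde{U}$ is connected and the two cross-cut neighbourhoods are disjoint; a small general-position adjustment makes each concatenation an injective hanging arc. In the situation $\Pi(\p)=\Pi(\q)$ both landing points are forced into a common compact set and may have to coincide, which is the exception allowed by the statement.

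For part (\ref{item:homotopicleadlines}), the plan is to split the homotopy into a ``tail'' close to $\p$, handled by a local chart, and a ``body'' away from $\p$, handled by simple connectedness of $\widetilde{U}$. Given two lead lines $\gamma,\gamma'$ determining $\p$, take a representative chain $\{\widetilde{c}_n\}_n$ of $\p$ and choose $n$ large enough so that both $\gamma$ and $\gamma'$ are eventually contained in $\widetilde{V}(\widetilde{c}_n)$ after their last crossing of $\widetilde{c}_n$. Since $\widehat{\phantom{\,}\widetilde{V}\phantom{\,}}(\widetilde{c}_n)\cup\{\p\}$ is a topological closed half-disk with $\p$ on the boundary, the two tails are two arcs in this half-disk from boundary points on $\widetilde{c}_n$ to points of $\Pi(\p)$, and they can be interpolated by a straight-line family in a uniformising chart; because $\Pi(\p)$ is non-empty and coincides with the set of points of $\widetilde{X}$ accessible through that chain, each intermediate arc is a lead line determining $\p$. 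The portions of $\gamma$ and $\gamma'$ from $\partial\AA$ to $\widetilde{c}_n$ lie entirely in $\widetilde{U}$; since $U$ is an annulus whose fundamental group is carried by the deck action, $\widetilde{U}$ is simply connected and these portions are homotopic rel endpoints through arcs in $\widetilde{U}$. Concatenating the two families yields the required homotopy through lead lines determining $\p$.

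The main obstacle is the tail homotopy in (\ref{item:homotopicleadlines}): one must verify that the interpolating arcs are genuinely hanging arcs (injective, meeting $\partial\AA$ only at the starting time) whose landing point lies in $\widetilde{X}$, not merely half-open arcs that converge to the prime end $\p$ in $\widehat{\phantom{\,}\widetilde{U}\phantom{\,}}$. This is where the choice of a uniformising chart matters: inside $\widehat{\phantom{\,}\widetilde{V}\phantom{\,}}(\widetilde{c}_n)\cup\{\p\}$, continuous interpolation between two arcs each landing at a point of $\Pi(\p)$ can be arranged to keep each slice landing in $\widetilde{X}$ by keeping the landing point inside $\Pi(\p)$ (which is non-empty and closed) throughout. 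Injectivity on each slice is secured by a standard general-position argument once one works inside a disk; in part (\ref{item:leadlinesdisjoint}) injectivity is obtained by the same method.
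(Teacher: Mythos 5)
The paper does not actually supply a proof of this lemma; it records these two statements as ``analogues of classical facts of prime end theory'' and points to Mather~\cite{mather}. So there is no proof in the paper to match your argument against, and your proposal has to stand on its own. Your outline for both parts is sensible and follows the expected route (separate by disjoint cross-cut neighbourhoods in part~(1); split into a cross-cut tail and a body in part~(2)), but there is a genuine gap at precisely the point you single out as the ``main obstacle,'' and your proposed resolution does not close it.

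The gap is the landing of the interpolating arcs in part~(2). Writing $\varphi$ for a uniformising chart, linear interpolation in $\D$ of two arcs landing at the same boundary point $\zeta$ produces arcs that also land at $\zeta$; but pushing forward by $\varphi$, there is no reason that $\varphi\circ\alpha_s$ lands at $p$ for every $s\in(0,1)$. Lindel\"of's theorem gives only non-tangential limits: $\varphi\circ\gamma\to p$ and $\varphi\circ\gamma'\to p$ forces the non-tangential limit of $\varphi$ at $\zeta$ to be $p$, but an intermediate $\alpha_s$ may approach $\zeta$ tangentially, e.g.\ hugging a ``spike'' of $\varphi^{-1}(\X)$ sandwiched between $\varphi^{-1}\gamma$ and $\varphi^{-1}\gamma'$, and then $\varphi\circ\alpha_s$ need not converge at all. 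Keeping the arc inside $\widehat{\widetilde{V}}(\widetilde{c}_n)$ for each $n$ only forces convergence to $\p$ in the prime end topology, which the paper expressly warns is weaker than landing at a point of $\X$. To make this step correct one must route the homotopy through a chain of cross-cuts $c_n$ with $\diam c_n\to 0$ and $c_n\to p$, forcing $\gamma_s(t)$ into $V(c_n)$ with a control on $t$ uniform in $s$; merely invoking that $\Pi(\p)$ is ``non-empty and closed'' does not produce such control. Two smaller issues: in part~(2) the body portions of $\gamma,\gamma'$ have different endpoints on $\partial\AA$ and on $\widetilde c_n$, so they are not homotopic \emph{rel endpoints} as stated (one needs a homotopy sliding the endpoints along these arcs while staying out of $\overline{\widetilde V(\widetilde c_n)}$); and in part~(1), ``$\widetilde U$ is connected'' alone does not give \emph{disjoint} prepended paths that also avoid the other prime end's cross-cut neighbourhood -- one should route them in $\widetilde U\setminus(\overline{\widetilde V(\widetilde c^{\p}_n)}\cup\overline{\widetilde V(\widetilde c^{\q}_n)})$, which requires a (straightforward but not free) planar-topology argument.
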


\subsection{Order in $\L(U)$}

Since $\L(U)$ is homeomorphic to $\R$, it can be equipped with a total order relation, denoted $\prec$, in a way that the topology of $\L(U)$
(as a subspace of the prime end closure) coincides with the order topology generated by $\prec$.
Among the two possible choices of order of $\L(U)$, we adopt the one that is coherent with $\widehat{T}$. Thus, we have $\p \prec \widehat{T}(\p)$ for every $\p \in \L(U)$.


Lead lines are useful to translate the order relation in $\L(U)$ to topological terms.
The order relation $\prec$ is illustrated as follows: given two different accessible prime ends $\p, \q \in \L(U)$,
$\p \prec \q$ if and only if there are disjoint (except for the possible coincidence of landing points) lead lines $\gamma_{\p}, \gamma_{\q}$ that determine $\p, \q$, respectively,
and such that $(\gamma_{\p}(0))_1 < (\gamma_{\q}(0))_1$.
If the previous conditions hold it follows that the inequality is also true for any pair of disjoint (with the possible exception of a common landing point) lead lines.

The notion of convergence to $+\infty$ in $\L(U)$ can be defined as follows:

\noindent
A sequence $\{\e_n\}_{n = 0}^{\infty} \subset \L(U)$ tends to $+\infty$ if for every $\p \in \L(U)$ there exists $n_0$ such that $\p \prec \e_n$
holds for every $n \ge n_0$.

The convergence to $-\infty$ is defined in an analogous fashion.

\begin{lemma}\label{lem:primeends2}
Let $\e_2 \prec \e_1 \prec \e_0$ be accessible prime ends in $\L(U)$ and $\gamma_2, \gamma_1, \gamma_0$ lead lines that determine
$\e_2, \e_1, \e_0$, respectively. Suppose $\gamma_1 \cap \gamma_0 = \emptyset$ and $\gamma_2 \cap \gamma_0 \neq \emptyset$.
Then, $\gamma_2 \cap \gamma_1 \neq \emptyset$.

As a consequence, if $\{\e_n\}$ is a decreasing sequence of accessible prime ends that diverges to $-\infty$, there are pairwise disjoint
lead lines $\{\gamma_n\}$ such that $\gamma_n$ determines $\e_n$.
\end{lemma}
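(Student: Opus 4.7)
The plan is to transfer the geometric content of the statement into the prime end closure $\widehat{\phantom{\,}\widetilde{U}\phantom{\,}}$, which by the preceding subsection is topologically a closed strip with ``top'' edge $\partial\AA$ and ``bottom'' edge $\L(U)$. Each lead line $\gamma_i$ ($i=0,1,2$) extends continuously to an arc $\overline{\gamma}_i$ in $\widehat{\phantom{\,}\widetilde{U}\phantom{\,}}$ running from $\gamma_i(0)\in\partial\AA$ to the accessible prime end $\e_i\in\L(U)$. Using Lemma~\ref{lem:primeends1}(\ref{item:homotopicleadlines}) together with a local deformation near the set $\X$, I may replace each $\gamma_i$ by a lead line determining the same prime end and arrange that the mutual intersection pattern of the three arcs inside $\widehat{\phantom{\,}\widetilde{U}\phantom{\,}}$ coincides with the one inside $\AA$, so that, in particular, $\overline{\gamma}_i\cap\overline{\gamma}_j=\emptyset$ if and only if $\gamma_i\cap\gamma_j=\emptyset$.

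For the main statement I argue by contradiction: assume $\gamma_2\cap\gamma_1=\emptyset$, so $\overline{\gamma}_2\cap\overline{\gamma}_1=\emptyset$. The arc $\overline{\gamma}_1$ separates the strip $\widehat{\phantom{\,}\widetilde{U}\phantom{\,}}$ into two open half-strips whose closures meet $\L(U)$ precisely in $\{\e:\e\preceq\e_1\}$ and $\{\e:\e_1\preceq\e\}$ (this is exactly the topological description of $\prec$ recalled above); call them $L_1$ and $R_1$ respectively. Because $\overline{\gamma}_2$ is a connected subset of $\widehat{\phantom{\,}\widetilde{U}\phantom{\,}}\setminus\overline{\gamma}_1$ whose endpoint $\e_2$ belongs to $\overline{L_1}$ (since $\e_2\prec\e_1$), the whole of $\overline{\gamma}_2$ lies in $L_1\cup\{\e_2\}$. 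The same argument applied with the disjoint pair $\overline{\gamma}_0,\overline{\gamma}_1$ and $\e_1\prec\e_0$ forces $\overline{\gamma}_0\subset R_1\cup\{\e_0\}$. These two regions are disjoint in $\widehat{\phantom{\,}\widetilde{U}\phantom{\,}}$, so $\overline{\gamma}_2\cap\overline{\gamma}_0=\emptyset$, contradicting the hypothesis $\gamma_2\cap\gamma_0\neq\emptyset$.

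For the consequence I construct the family $\{\gamma_n\}$ inductively. The cases $n=0,1$ follow directly from Lemma~\ref{lem:primeends1}(\ref{item:leadlinesdisjoint}). Assuming pairwise disjoint lead lines $\gamma_0,\ldots,\gamma_{n-1}$ have been built, I choose, again via Lemma~\ref{lem:primeends1}(\ref{item:leadlinesdisjoint}), a lead line $\gamma_n$ determining $\e_n$ with $\gamma_n\cap\gamma_{n-1}=\emptyset$. If $\gamma_n$ met some earlier $\gamma_i$ with $i<n-1$, the chain $\e_n\prec\e_{n-1}\prec\e_i$ together with $\gamma_{n-1}\cap\gamma_i=\emptyset$ would place the triple $(\gamma_n,\gamma_{n-1},\gamma_i)$ under the hypotheses of the first part of the lemma, forcing $\gamma_n\cap\gamma_{n-1}\neq\emptyset$, a contradiction. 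Hence $\gamma_n$ is disjoint from every previously chosen lead line, closing the induction.

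The delicate point, and the only real obstacle, is that the set $\X$ sits outside $\widehat{\phantom{\,}\widetilde{U}\phantom{\,}}$, so two lead lines that meet in $\AA$ only at a common landing point in $\X$ would become disjoint arcs after passing to the prime end closure. This is exactly what the normalization step addresses: by a homotopy of lead lines, whose existence is guaranteed by Lemma~\ref{lem:primeends1}(\ref{item:homotopicleadlines}), and a small local deformation near $\X$, the landings in $\X$ of the three lead lines can be put into general position so that passing to $\widehat{\phantom{\,}\widetilde{U}\phantom{\,}}$ preserves exactly the relevant incidence information. Once this is done, the separation argument above applies without exceptional cases.
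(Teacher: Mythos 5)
Your core argument---that $\overline{\gamma}_1$ separates the prime end closure $\widehat{\phantom{\,}\widetilde{U}\phantom{\,}}$, so that $\overline{\gamma}_0$ and $\overline{\gamma}_2$ land in disjoint complementary half-strips and therefore cannot meet---is exactly the paper's one-line proof of the first statement, and your inductive construction of pairwise disjoint lead lines (pick $\gamma_n$ disjoint from $\gamma_{n-1}$ via Lemma~\ref{lem:primeends1}, then use the first statement to exclude intersections with earlier $\gamma_i$) is the paper's proof of the second. So this is the same approach, just spelled out in more detail.

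The ``normalization step'' you interpose is an imprecision rather than a genuine help. It is unnecessary: under the contradiction hypothesis $\gamma_2\cap\gamma_1=\emptyset$ (and you may assume this disjointness is not merely the absence of a shared landing point, since a shared landing point would already give the conclusion), both $\gamma_1\cap\gamma_0=\emptyset$ and $\gamma_2\cap\gamma_1=\emptyset$ hold in $\AA$ with no common landing points, so passing to $\widehat{\phantom{\,}\widetilde{U}\phantom{\,}}$ already yields $\overline{\gamma}_1$ disjoint from both $\overline{\gamma}_0$ and $\overline{\gamma}_2$ without any deformation. It is also slightly misleading: since the lemma is an implication about the given lead lines, you cannot prove it by replacing them; and in the one genuinely awkward configuration---when $\gamma_2\cap\gamma_0$ consists solely of a common landing point in $\X$---no homotopy of lead lines can ``put them into general position,'' because any lead lines determining $\e_0$ and $\e_2$ still land at that same principal point, so the separation argument in the closure simply does not see that intersection. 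That corner case is a small imprecision of the lemma's statement, silently ignored by the paper's proof as well, and not a flaw of your reasoning specifically; in the way the lemma is actually applied (Lemma~\ref{lem:goodleadlinessequence}) the intersections considered occur in $\widetilde{U}$, so the issue does not arise.
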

\begin{proof}
The first statement follows from the fact that $\e_0$ and $\e_2$ are separated by $\gamma_1$ in the prime end closure of $\widetilde{U}$.

The second assertion can be proved by induction. Take $\gamma_0$ to be any lead line that determines $\e_0$.
Assume, as inductive hypothesis, that there are pairwise disjoint lead lines $\gamma_0, \ldots, \gamma_n$ that determine $\e_0,\ldots,\e_n$, respectively.
Use Lemma \ref{lem:primeends1} to obtain a lead line $\gamma_{n+1}$ disjoint to $\gamma_n$ that determines $\e_{n+1}$.
The first part of this lemma concludes that $\gamma_{n+1}$ is also disjoint to the rest of lead lines and the inductive step is proved.
\end{proof}

\subsection{Prime end rotation number}

Let $f\colon \A \to \A$ be an orientation--preserving homeomorphism and fix a lift $F$ of $f$ to $\widetilde{\A}$.
Assume $X$ is invariant under $f$, hence so is $U$. The map $f$ extends to a homeomorphism $\widehat{f}\colon \widehat{U} \to \widehat{U}$
as follows: if a minimal chain $\{c_n\}$ represents $\p$ then $\widehat{f}(\p)$ is the prime end defined by the minimal chain $\{f(c_n)\}$.
There is a unique lift $\widehat{F}$ of $\widehat{f}$ to $\widehat{\phantom{\,}\widetilde{U}\phantom{\,}}$,
the prime end closure of $\widetilde{U}$, which coincides with $F$ in $\widetilde{U}$.

Given a circle homeomorphism $g \colon S^1 \to S^1$ and a fixed lift $G \colon \R \to \R$ of $g$, the rotation
number of $G$ is defined as $\lim_{n \to +\infty} G^n(x)/n$ for any $x \in \R$.
Notice that the limit is equal to $\lim_{n \to -\infty} G^n(x)/n$.

The \emph{prime end rotation number}, $\widehat{\rho}(F, X)$, is defined as
the rotation number of the restriction of $\F$ to $\L(U)$.
Note that $\widehat{\rho}(F, X)$ only depends on the choice of lift $F$ of $f$ up to an integer constant.
Since the speed of rotation of every orbit of a circle homeomorphism is equal, we can describe $\widehat{\rho}(F, X)$
by looking at the orbit of one prime end of $\L(U)$, say $\p$.
As a reference, take a prime end $\q$ not in the orbit of $\p$.
For any $n \in \Z$, there exists $m = m(n)$ such that $\widehat{T}^m(\q) \prec \widehat{F}^n(\p) \prec \widehat{T}^{m+1}(\q)$.
Clearly, $\widehat{\rho}(F, X) = \lim_{n \to +\infty} \frac{m(n)}{n}$.

This approach is particularly interesting if we restrict our attention to accessible prime ends. Suppose $\q$ is accessible.
Let $\beta$ be a lead line which determines the prime end $\q$ and projects onto a simple arc in $\A$, i.e. $\beta \cap T \beta = \emptyset$.
Denote by $D$ the domain in $\widetilde{U}$ enclosed by $\beta$ and $T\beta$.
Notice that $(D)_1$ may not be bounded but $D$ is relatively compact if viewed as a subset of the prime end closure.
An elementary planar topology argument shows that the orbit of $\beta$ is composed of pairwise disjoint arcs.
The domains $T^k(D)$, $k \in \Z$, are the connected components of the complement of the orbit of $\beta$ in $\widetilde{U}$,
$T^k(D)$ being adherent to $T^n(\beta)$ and $T^{n+1}(\beta)$.
A remarkable fact from which we will take advantage throughout the paper is the following:

The inequality $\widehat{T}^m(\q) \prec \widehat{F}^n(\p) \prec \widehat{T}^{m+1}(\q)$ is equivalent to the existence of
a lead line $\gamma$ which determines $\widehat{F}^n(\p)$ and is contained in $T^m(D)$ except for its landing point.

\section{Relative winding number}\label{sec:windingnumber}

Consider the following four points in $\AA = \R \times (-\infty, 0]$: $(0,0), (1,0), (0,-1)$ and $(1,-1)$.
We can draw hanging arcs $\gamma$ joining $(0,0)$ to $(1,-1)$ and $\gamma'$ joining $(1,0)$ to $(0,-1)$ that are disjoint.
The most simple examples can be easily splitted into two categories depending on whether $\gamma$ goes ``under'' $\gamma'$ or viceversa.
This intuition extends to more complicated drawings where at first sight it is not as easy to deduce which arc goes ``under'' the other.
See Figure \ref{fig:under}.
This sort of classification is well--posed in a topological setting where the answer does not vary after
a continuous deformation of the picture, relative to the endpoints.

\begin{figure}[htb]
\begin{center}
\includegraphics[scale = .6]{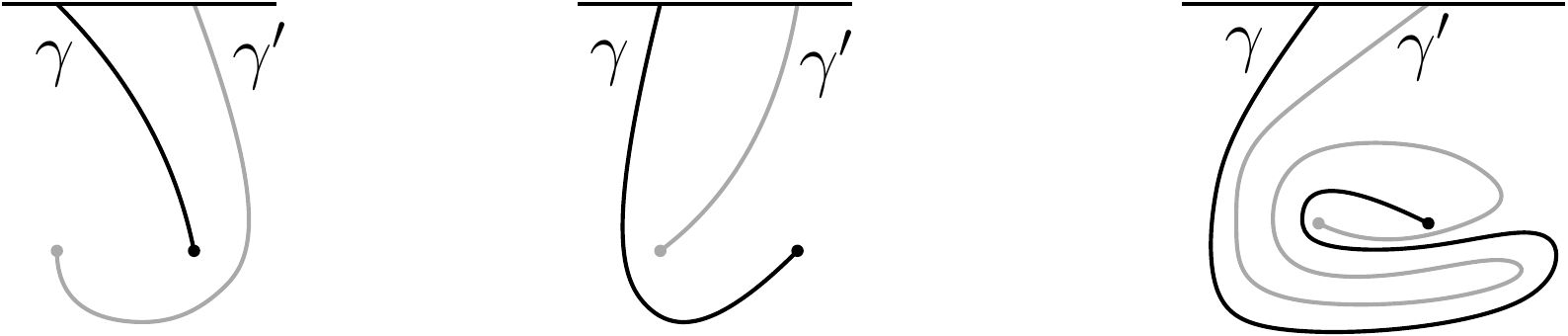}
\end{center}
\caption{On the left, $\gamma'$ (gray) goes ``under'' $\gamma$ (black). In the middle, $\gamma$ goes ``under'' $\gamma'$. On the right
it is not as clear, but if we are allow to deform continuously the picture to straighten $\gamma$ (with endpoints fixed) we see
$\gamma'$ arc going ``under'' $\gamma$.}
\label{fig:under}
\end{figure}

The idea behind Theorem \ref{thm:resumen} (\ref{item:1}), which was the origin of this article, is to detect the presence
of the picture described in the previous paragraph in some complicated rotational dynamics and play with it
until we reach a contradiction.
The notion that we introduce to formalize the idea of arcs going ``under'' another is the relative winding number
between pairs of hanging arcs.

Let $\gamma, \gamma'$ be two hanging arcs such that $(\gamma(0))_1 < (\gamma'(0))_1$ and $\gamma(t) \neq \gamma'(t)$ for every $t \in [0,1]$.
Consider $\phi\colon [0,1] \to S^1$ defined by
$$\phi(t) = \frac{\gamma'(t) - \gamma(t)}{||\gamma'(t) - \gamma(t)||}.$$
Notice that $\phi(0) = 1$. Let $\R \to S^1: x \mapsto e^{i \pi x}$ be the universal covering projection.
The map $\phi$ has a unique lift $\widetilde{\phi}\colon [0,1] \to \R$ such that $\widetilde{\phi}(0) = 0$.

\begin{definition}
The \emph{relative winding number} of the hanging arcs $\gamma$ and $\gamma'$ is defined by
$$w(\gamma, \gamma') := \widetilde{\phi}(1) \in \R.$$
\end{definition}

\begin{remark}\label{rmk:reparameterize}
We may replace $\phi$ in the definition with any map $\phi'$ homotopic to $\phi$ relative to $\{0, 1\}$. Indeed,
the unique lift $\widetilde{\phi}'$ of $\phi'$ such that $\widetilde{\phi}'(0) = 0$ satisfies $\widetilde{\phi}'(1) = \widetilde{\phi}(1)$.
In particular, if $\varphi_s\colon[0,1] \to [0,1]$ is a continuous family of reparameterizations of $\gamma$, $\varphi_0 = \id$,
it follows that $w(\gamma, \gamma') = w(\gamma \circ \varphi_1, \gamma')$ as long as
the relative winding number is well--defined for every pair $(\gamma \circ \varphi_s, \gamma')$,
that is $\gamma(\varphi_s(t)) \neq \gamma'(t)$ for every $s, t \in [0,1]$.
\end{remark}

Given $a \in \R$, $a \notin 1/2 + \Z$, denote $[a]$ the closest integer to $a$.
The following lemma shows that if we move the hanging arcs $\gamma, \gamma'$ their relative winding number does not oscillate significantly
as long as their landing points do not exchange sides.

\begin{lemma}\label{lem:endhomotopy}
Let $\{\gamma_s\}_{s=0}^1, \{\gamma'_s\}_{s=0}^1$ be two homotopies of hanging arcs such that
$$(\gamma_s(0))_1 < (\gamma'_s(0))_1, \enskip
\gamma_s(t) \neq \gamma'_s(t) \enskip \text{and} \enskip
(\gamma_s(1))_1 \neq (\gamma'_s(1))_1$$
for all $s,t \in [0,1]$. Then,
$$ [w(\gamma_0, \gamma'_0)] = [w(\gamma_1, \gamma'_1)].$$
Furthermore, if the homotopies of hanging arcs fix the landing points, then
$$w(\gamma_0, \gamma'_0) = w(\gamma_1, \gamma'_1).$$
\end{lemma}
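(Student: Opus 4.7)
The plan is to promote the construction defining $w(\gamma_s, \gamma'_s)$ for each fixed $s$ to a two-parameter family and exploit continuity of the resulting lift. More concretely, I would consider the map
\[
\Phi\colon [0,1]^2 \to S^1, \qquad \Phi(s,t) = \frac{\gamma'_s(t) - \gamma_s(t)}{\|\gamma'_s(t) - \gamma_s(t)\|},
\]
which is well defined and continuous because the homotopies are continuous and the hypothesis $\gamma_s(t) \neq \gamma'_s(t)$ keeps the denominator nonzero. Using the covering $\R \to S^1$, $x \mapsto e^{i\pi x}$, I would lift $\Phi$ to a continuous map $\widetilde{\Phi}\colon [0,1]^2 \to \R$ with $\widetilde{\Phi}(0,0) = 0$.

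Next I would check that, for each fixed $s$, the restriction $\widetilde{\Phi}(s, \cdot)$ coincides with the lift $\widetilde{\phi}_s$ used in the definition of $w(\gamma_s, \gamma'_s)$. Both endpoints $\gamma_s(0), \gamma'_s(0)$ lie on $\partial \AA = \R \times \{0\}$ with $(\gamma_s(0))_1 < (\gamma'_s(0))_1$, so $\Phi(s, 0) = 1$ identically in $s$. Since the path $s \mapsto \Phi(s,0)$ is constant and $\widetilde{\Phi}(0,0) = 0$, the horizontal restriction $s \mapsto \widetilde{\Phi}(s,0)$ stays at $0$. Consequently $\widetilde{\Phi}(s,\cdot)$ is the unique lift of $\phi_s$ starting at $0$, so $w(\gamma_s, \gamma'_s) = \widetilde{\Phi}(s,1)$.

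Now the main assertion reduces to the following observation. The condition $(\gamma_s(1))_1 \neq (\gamma'_s(1))_1$ means that $\Phi(s,1)$ avoids $\{i, -i\}$, and under the covering $x \mapsto e^{i\pi x}$ the preimage of $\{i, -i\}$ is exactly $\tfrac{1}{2} + \Z$. Therefore $s \mapsto \widetilde{\Phi}(s,1)$ is a continuous map from $[0,1]$ into $\R \setminus (\tfrac{1}{2} + \Z)$. On this complement the nearest-integer function $[\,\cdot\,]$ is continuous and integer-valued, so $s \mapsto [w(\gamma_s, \gamma'_s)]$ is constant, which yields $[w(\gamma_0, \gamma'_0)] = [w(\gamma_1, \gamma'_1)]$.

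For the furthermore part, if the landing points are fixed along both homotopies then $\Phi(s,1)$ is independent of $s$, so the continuous function $s \mapsto \widetilde{\Phi}(s,1)$ takes values in a single fiber of the covering $\R \to S^1$. That fiber is discrete and $[0,1]$ is connected, hence $\widetilde{\Phi}(s,1)$ is actually constant, giving $w(\gamma_0, \gamma'_0) = w(\gamma_1, \gamma'_1)$. There is no genuine obstacle here; the only subtlety to verify carefully is that the two-parameter lift $\widetilde{\Phi}$ restricts fiberwise to the individual lifts $\widetilde{\phi}_s$, which is exactly where the normalization $\Phi(s,0) \equiv 1$ provided by the hypothesis $(\gamma_s(0))_1 < (\gamma'_s(0))_1$ is used.
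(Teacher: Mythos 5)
Your proof is correct and follows essentially the same route as the paper's: the key observation in both is that the hypothesis $(\gamma_s(1))_1 \neq (\gamma'_s(1))_1$ forces $w(\gamma_s,\gamma'_s) \notin \tfrac{1}{2}+\Z$ for every $s$, from which local constancy of the nearest integer does the rest. The paper states this and then says ``the result trivially follows''; you have simply supplied the continuity argument (the two-parameter lift $\widetilde\Phi$, normalized along $t=0$ by $\Phi(s,0)\equiv 1$) that the paper leaves implicit.
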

\begin{proof}
The condition in the statement implies
$$\phi_s(1) = \frac{\gamma'_s(1) - \gamma_s(1)}{||\gamma'_s(1) - \gamma_s(1)||} \neq e^{i\pi/2}, e^{-i\pi/2},$$
hence $\widetilde{\phi_s}(1) = w(\gamma_s, \gamma'_s) \notin 1/2 + \Z$, for every $s \in [0,1]$. The result trivially follows.
\end{proof}

In brief, the previous lemma states that
 if the landing points remain fixed by the homotopies the relative winding number is constant as long as it is defined.

\begin{lemma}\label{lem:updownhairs}
Let $\gamma, \gamma'$ be disjoint hanging arcs such that $(\gamma(0))_1 < (\gamma'(0))_1$.
\begin{itemize}
\item If $(\gamma'(1))_1 < \min (\gamma)_1$ then $[w(\gamma, \gamma')] = -1$.
\item If $\max (\gamma')_1 < (\gamma(1))_1$ then $[w(\gamma, \gamma')] = 1$.
\end{itemize}
\end{lemma}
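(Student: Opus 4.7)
\emph{Plan.}
My plan is to reduce $(\gamma,\gamma')$ through a homotopy of disjoint hanging arcs to an explicit standard pair, and then compute $w$ directly; the invariance of the rounded winding number under such homotopies is provided by Lemma~\ref{lem:endhomotopy}. I will describe the argument for the first case (the second is symmetric under a left--right reflection).

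\emph{Reduction.}
For any hanging arc $\eta$ in $\AA$, the complement $\AA\setminus\eta$ is simply connected: $\eta$ is a slit joining $\partial\AA$ to an interior point, so any loop around $\eta$ can be contracted by sliding it off through the boundary. By a standard Schoenflies-type argument inside this slit region, any two hanging arcs in $\AA\setminus\eta$ sharing start and landing points are isotopic through hanging arcs contained in $\AA\setminus\eta$. I would exploit this twice: first, holding $\gamma'$ fixed, isotope $\gamma$ (endpoints fixed) to a simple arc $\gamma_*$ confined to a bounded strip near $\partial\AA$; then, holding $\gamma_*$ fixed, isotope $\gamma'$ to a standard ``U-shape'' $\gamma'_*$ that descends from $(\gamma'(0))_1$ on $\partial\AA$ down to some large depth $-M$ well below $\gamma_*$, traverses horizontally to the vertical line through $\gamma'(1)$, and ascends to $\gamma'(1)$. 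For $M$ large, the hypothesis $(\gamma'(1))_1 < \min(\gamma)_1$ (which after a harmless further isotopy is also satisfied by $\gamma_*$) ensures the arcs remain disjoint throughout and that all start/landing points keep distinct first coordinates. Lemma~\ref{lem:endhomotopy} then yields $[w(\gamma,\gamma')] = [w(\gamma_*,\gamma'_*)]$.

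\emph{Computation.}
Choose a compatible parameterization of the pair (admissible by Remark~\ref{rmk:reparameterize}, since $\gamma_*$ and $\gamma'_*$ are disjoint) and track the continuous lift $\widetilde{\phi}(t)$ of $\phi(t)=(\gamma'_*(t)-\gamma_*(t))/\|\gamma'_*(t)-\gamma_*(t)\|$. Initially $\phi(0)=1$ and $\widetilde{\phi}(0)=0$. Leg by leg: during the descent of $\gamma'_*$ the vector rotates clockwise past the downward direction; during the horizontal leg at depth $-M$ the rotation contribution tends to $0$ as $M\to\infty$; during the ascent on the left side of $\gamma_*$ the vector continues clockwise past the leftward direction. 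A straightforward estimate shows that for $M$ sufficiently large the total continuous rotation of $\phi$ lies strictly in $(-3\pi/2,-\pi/2)$, so $\widetilde{\phi}(1)\in(-3/2,-1/2)$ and $[w(\gamma_*,\gamma'_*)]=-1$. The symmetric construction for the second bullet---a mirror standard pair in which $\gamma_*$ wraps a deep inverted U around $\gamma'_*$ from the right---produces $[w]=+1$.

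\emph{Main obstacle.}
The delicate technical step is the first one: the reducing isotopy must proceed through \emph{hanging} arcs (injective arcs meeting $\partial\AA$ only at the initial point), not through arbitrary homotopies. This is a routine application of planar topology inside the simply connected slit region $\AA\setminus\eta$, but requires some bookkeeping to prevent the interior of the moving arc from being inadvertently pushed onto $\partial\AA$, and to keep the landing points' first coordinates distinct at every time of the homotopy.
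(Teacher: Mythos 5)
Your approach is genuinely different from the paper's. The paper never moves the arcs: it keeps $\gamma,\gamma'$ fixed and replaces the witness map $\phi$ by the explicit ``corner'' homotopy $\phi_r$ (first sliding along $\gamma'$ with $\gamma(0)$ held, then sliding along $\gamma$ with $\gamma'(1)$ held), which is admissible under Remark~\ref{rmk:reparameterize} precisely because the two arcs are already disjoint as sets; the hypotheses then let one read off $\widetilde{\phi_r}(1)\in(-3/2,-1/2)$ directly, with no isotopy of arcs and no $M\to\infty$ limit. Your proposal instead tries to isotope the arcs themselves to a standard pair and invoke Lemma~\ref{lem:endhomotopy}.

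There is, however, a concrete gap in your reduction step. You assert that once $\gamma_*$ is pinned in a bounded region, taking $M$ large makes the U-shaped $\gamma'_*$ (straight descent at $\theta=(\gamma'(0))_1$, horizontal traverse at depth $-M$, ascent at $\theta=(\gamma'(1))_1$) disjoint from $\gamma_*$. This is false in general. The descent leg lies on the fixed vertical line $\theta=(\gamma'(0))_1$, and nothing in the hypotheses prevents $\gamma$ (and hence $\gamma_*$, which must keep the landing point $\gamma(1)$ fixed) from crossing that line. For example, let $\gamma$ descend at $\theta=0$ to depth $-3$, traverse right to $\theta=5$, and ascend to $\gamma(1)=(5,-1)$, while $\gamma'$ starts at $\gamma'(0)=(2,0)$ and lands at $\gamma'(1)=(-1,-1)$, winding around $\gamma$'s tip to stay disjoint. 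The hypotheses $(\gamma(0))_1<(\gamma'(0))_1$ and $(\gamma'(1))_1<\min(\gamma)_1$ are satisfied, yet every hanging arc with the endpoints of $\gamma$ must cross $\theta=2$ at some depth, so it meets the proposed descent leg regardless of $M$. Your ``harmless further isotopy'' only ensures $\min(\gamma_*)_1>(\gamma'(1))_1$; it cannot push $\max(\gamma_*)_1$ below $(\gamma'(0))_1$, since the landing point is fixed. Repairing the standard pair then forces $\gamma'_*$ to dodge $\gamma_*$ in a $\gamma_*$-dependent way, and the winding computation for that dodged arc is precisely what the lemma is trying to establish, so the argument becomes circular. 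In addition, the stated target ``$\gamma_*$ confined to a bounded strip near $\partial\AA$'' is not achievable when $\gamma(1)$ sits deep in the band. The paper's choice to deform $\phi$ rather than the arcs avoids all of this, and only needs the arcs to be disjoint --- which is given.
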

\begin{proof}
For the first statement, define $\phi_r\colon [0, 1] \to S^1$ as follows

\[
\phi_r(t) =
\begin{dcases}
\frac{\gamma'(2t) - \gamma(0)}{||\gamma'(2t) - \gamma(0)||} & \text{if} \enskip 0 \le t \le 1/2 \\
\frac{\gamma'(1) - \gamma(2t-1)}{||\gamma'(1) - \gamma(2t-1)||} & \text{if} \enskip 1/2 \le t \le 1.\\
\end{dcases}
\]

\begin{figure}[htb]
\begin{center}
\includegraphics[scale = 0.8]{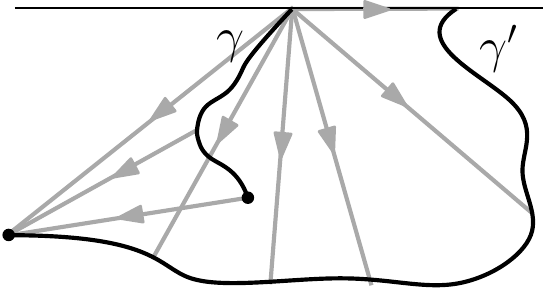}
\end{center}
\caption{Gray arrows display the map $\phi_r$.}
\label{fig:relativewindingnumber}
\end{figure}

See Figure \ref{fig:relativewindingnumber}.
It is easy to check, see Remark \ref{rmk:reparameterize}, that $\phi_r$ is homotopic to $\phi$ relative to $\{0,1\}$.
Thus, its lift $\widetilde{\phi_r}$ satisfies $\widetilde{\phi_r}(1) = \widetilde{\phi}(1) = w(\gamma, \gamma')$.
From the definition it is easy to see that $\widetilde{\phi_r}(t) \in (-1, 0)$ for $0 < t < 1/2$.
Since $(\gamma'(1))_1 < (\gamma(0))_1$, $\widetilde{\phi_r}(1/2) \in (-3/2, -1/2)$.
Moreover, $(\gamma'(1))_1 < (\gamma(t))_1$ for every $t$, so $\widetilde{\phi_r}(t) \in (-1/2, -3/2)$ for all $1/2 \le t \le 1$.
Then,
\[
[w(\gamma, \gamma')] = [\widetilde{\phi_r}(1)] = -1.
\]

The second statement is proven in a similar fashion.

%
\end{proof}

\subsection{Accessible prime ends}

Given two different accessible prime ends $\p, \q \in \L(U)$ whose principal points are $p \neq q$,
Lemma \ref{lem:primeends1} (\ref{item:leadlinesdisjoint}) guarantees the existence of disjoint lead lines $\gamma_{\p}, \gamma_{\q}$
such that $\gamma_{\p}(1) = p, \gamma_{\q}(1) = q$.
Recall also Lemma \ref{lem:primeends1} (\ref{item:homotopicleadlines}):
 any two lead lines that determine $\p$ are homotopic through lead lines that also determine $\p$.

\begin{definition}
Let $\p, \q \in \L(U)$, $\p \prec \q$, be accessible prime ends with different principal points
and let $\gamma_{\p}, \gamma_{\q}$ be lead lines as above.
The \emph{relative winding number} of $\p, \q$ is defined by
$$w(\p, \q) = w(\gamma_{\p}, \gamma_{\q}).$$
\end{definition}
\begin{proposition}
$w(\p, \q)$ does not depend on the choice of $\gamma_{\p}, \gamma_{\q}$.
\end{proposition}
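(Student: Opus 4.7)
The plan is to show $w(\gamma_\p, \gamma_\q) = w(\gamma'_\p, \gamma'_\q)$ for any two pairs of disjoint lead lines determining $\p \prec \q$ (with distinct principal points $p \neq q$) by producing a homotopy between the two configurations through pointwise--distinct hanging arcs with landing points fixed at $p$ and $q$, and then invoking Lemma \ref{lem:endhomotopy}.

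First I would reduce to varying a single lead line at a time. By Lemma \ref{lem:primeends1}(\ref{item:leadlinesdisjoint}) there exists a lead line $\gamma''_\q$ determining $\q$ and disjoint from both $\gamma_\p$ and $\gamma'_\p$, and one can interpolate via
$(\gamma_\p, \gamma_\q) \to (\gamma_\p, \gamma''_\q) \to (\gamma'_\p, \gamma''_\q) \to (\gamma'_\p, \gamma'_\q),$
so each step changes only one arc while the fixed arc remains disjoint from both the initial and final replacement. It therefore suffices to prove the elementary statement: if $\gamma_\p, \gamma'_\p$ are lead lines determining $\p$, both disjoint from the fixed lead line $\gamma_\q$ for $\q$, then $w(\gamma_\p, \gamma_\q) = w(\gamma'_\p, \gamma_\q)$.

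For the elementary move, view $\gamma_\q$ as an arc in the prime end closure $\widehat{\widetilde{U}}$, extending it to land at the accessible prime end $\q$. Since $\widehat{\widetilde{U}}$ is topologically a closed infinite band and $\gamma_\q$ joins its two boundary components $\partial \AA$ and $\L(U)$, it separates $\widehat{\widetilde{U}}$ into two components, each with simply connected closure. Let $C$ be the closure of the component containing $\p$. Both $\gamma_\p$ and $\gamma'_\p$ lie in $C$ and land at $p$. Exploiting the simple connectivity of $C$ I construct a homotopy $\{\gamma_{\p,s}\}_{s\in[0,1]}$ of hanging arcs from $\gamma_\p$ to $\gamma'_\p$ that stays in $C$ throughout, with landing point $p$ fixed and starting point moving along $\partial \AA \cap C$. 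Each $\gamma_{\p,s}$ is disjoint from $\gamma_\q$, in particular pointwise distinct, and the starting--point order $(\gamma_{\p,s}(0))_1 < (\gamma_\q(0))_1$ is preserved. Lemma \ref{lem:endhomotopy} applied to the pair $(\gamma_{\p,s}, \gamma_\q)$ then makes $w(\gamma_{\p,s}, \gamma_\q)$ constant in $s$, and the desired equality follows.

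The main obstacle is ensuring that the homotopy inside $C$ proceeds through genuine hanging arcs landing at $p$, because an arc in $\overline{C}$ ending at $\p$ only in the prime end topology need not converge to any point of $\X$. To overcome this I would fix a small cross--cut $c$ of $U$ whose lift $\widetilde c$ lies in $C$ and whose enclosed region $\widetilde{V}(\widetilde c)$ around $\p$ is disjoint from $\gamma_\q$; since both $\gamma_\p$ and $\gamma'_\p$ determine $\p$ they eventually enter $\widetilde{V}(\widetilde c)$, and by Lemma \ref{lem:primeends1}(\ref{item:homotopicleadlines}) their terminal portions may be homotoped through lead lines for $\p$ to agree on a common stub landing at $p$. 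The remaining initial portions then live in the simply connected region $C \setminus \widetilde{V}(\widetilde c)$ and can be homotoped freely with endpoint moving on $\partial \AA$, completing the construction.
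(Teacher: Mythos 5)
Your strategy is genuinely different from the paper's. The paper homotopes $\gamma'_\p$ to $\gamma_\p$ (and $\gamma'_\q$ to $\gamma_\q$) \emph{without} controlling the intermediate arcs' positions relative to the opposite arc; instead it observes that the homotopy avoids only the single point $\gamma'_\q(0)$, and then uses a reparameterization $\varphi$ with $\varphi[0,\epsilon]\supset[0,1-\epsilon]$ to decouple the two time parameters, so that $\gamma^s_\p(t)\neq\gamma'_\q(\varphi(t))$ is guaranteed by the two $\epsilon$-conditions near the endpoints. That trick lets it invoke Lemma \ref{lem:endhomotopy} even though the arcs may cross each other. You instead want the entire homotopy to stay on one side of a fixed reference lead line for $\q$, which is a stronger geometric requirement.

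The key gap is the very first step. You invoke Lemma \ref{lem:primeends1}(\ref{item:leadlinesdisjoint}) to produce a lead line $\gamma''_\q$ disjoint from \emph{both} $\gamma_\p$ and $\gamma'_\p$. That lemma only asserts that, given $\p\neq\q$, \emph{some} pair of disjoint lead lines exists; it says nothing about a lead line for $\q$ avoiding two \emph{prescribed} lead lines for $\p$. The claim is in fact true, but it requires an argument you have not given: in the prime end closure $\widehat{\widetilde U}\cong\R\times[0,1]$, the compact connected set $\gamma_\p\cup\gamma'_\p$ meets $\L(U)$ only at $\p$, so the whole half-line of $\L(U)$ on the $\q$-side of $\p$ lies in a single complementary component, which, being unbounded, also meets $\partial\AA$; a lead line for $\q$ can then be built inside that component. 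Without this, the interpolation chain $(\gamma_\p,\gamma_\q)\to(\gamma_\p,\gamma''_\q)\to(\gamma'_\p,\gamma''_\q)\to(\gamma'_\p,\gamma'_\q)$ is not justified. A secondary, smaller concern: Lemma \ref{lem:primeends1}(\ref{item:homotopicleadlines}) supplies a \emph{global} homotopy through lead lines determining $\p$, not a local homotopy of only the terminal stubs inside $\widetilde V(\widetilde c)$, so the step where you make the tails of $\gamma_\p$ and $\gamma'_\p$ agree needs its own elementary disk argument in $\widetilde V(\widetilde c)\cup\{p\}$ rather than a citation of that lemma. Both gaps are fixable, but as written the proof is incomplete precisely at the points where the paper's reparameterization device lets it avoid this extra topological work.
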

\begin{proof}
Let $\gamma'_{\p}, \gamma'_{\q}$ be another pair of lead lines satisfying the properties above.
We have to show that $w(\gamma_{\p}, \gamma_{\q}) = w(\gamma'_{\p}, \gamma'_{\q})$.
Note that $(\gamma_{\p}(0))_1 < (\gamma_{\q}(0))_1$ and $(\gamma'_{\p}(0))_1 < (\gamma'_{\q}(0))_1$.

Assume first that $(\gamma_{\p}(0))_1 < (\gamma'_{\q}(0))_1$.
Let $\{\gamma^s_{\p}\}_{s=0}^1$ be a homotopy of lead lines between $\gamma^0_{\p} = \gamma'_{\p}$ and $\gamma^1_{\p} = \gamma_{\p}$
that fixes the landing point $p$ and avoids $\gamma'_{\q}(0)$.
Similarly, let $\{\gamma^s_{\q}\}_{s=0}^1$ be a homotopy of lead lines between $\gamma^0_{\q} = \gamma'_{\q}$
and $\gamma^1_{\q} = \gamma_{\q}$ such that $\gamma^s_{\q}(1) = q$ and $\gamma^s_{\q}(0) \neq \gamma_{\p}(0)$ for every $s$.
Since the arcs $\gamma^s_{\p}$ are disjoint to $q$, if $\epsilon > 0$ is small enough we have that
\[ \gamma^s_{\p}[0,1] \cap \gamma'_{\q}[1-\epsilon, \epsilon] = \emptyset
\enskip \enskip \text{ and also that } \enskip \enskip
\gamma^s_{\p}[0, \epsilon] \cap \gamma'_{\q}[0,1] = \emptyset.
\]

\noindent
Likewise, we may shrink $\epsilon > 0$ so that additionally satisfies
\[
\gamma^s_{\q}[0, \epsilon] \cap \gamma_{\p}[0,1] = \emptyset
\enskip \enskip \text{and} \enskip \enskip
\gamma^s_{\q}[0,1] \cap \gamma_{\p}[1-\epsilon, \epsilon] = \emptyset.
\]

The idea is to use $\epsilon$ to construct a reparameterization $\varphi$ of $[0,1]$ so that $w(\gamma^s_{\p}, \gamma'_{\q}\circ \varphi)$
and $w(\gamma_{\p}, \gamma^s_{\q} \circ \varphi)$ is well--defined for every $s$.
It suffices that $\varphi[0,\epsilon] \supset [0, 1-\epsilon]$. Then,
\[
w(\gamma'_{\p}, \gamma'_{\q}) = w(\gamma'_{\p}, \gamma'_{\q} \circ \varphi) = w(\gamma_{\p}, \gamma'_{\q} \circ \varphi) =
w(\gamma_{\p}, \gamma_{\q} \circ \varphi) = w(\gamma_{\p}, \gamma_{\q}),
\]
where the first and last equality comes from Remark \ref{rmk:reparameterize} and the other two from Lemma \ref{lem:endhomotopy}.

In order to conclude the proof, notice that if the hypothesis $(\gamma_{\p}(0))_1 < (\gamma'_{\q}(0))_1$ does not hold
then $(\gamma'_{\p}(0))_1 < (\gamma_{\q}(0))_1$ and the procedure can be reversed: first use the homotopy between $\gamma'_{\q}$
and $\gamma_{\q}$ and then a suitable reparameterization of the homotopy between $\gamma'_{\p}$ and $\gamma_{\p}$.
\end{proof}

Our immediate goal is to translate Lemma \ref{lem:updownhairs} to the language of accessible prime ends.

\begin{definition}
For an accessible prime end $\p \in \L(U)$, set
\[
\min \p = \sup_{\gamma \in \Gamma(\p)} \min (\gamma)_1, \enskip \enskip \enskip
\max \p = \inf_{\gamma \in \Gamma(\p)} \max (\gamma)_1,
\]
where $\Gamma(\p)$ denotes the set of lead lines $\gamma$ which determine the prime end $\p$.
\end{definition}

Clearly, any lead line $\gamma$ which determines $\p$ satisfies $[\min \p, \max \p] \subset (\gamma)_1$.
A straightforward corollary of Lemma \ref{lem:updownhairs} reads as follows.

\begin{lemma}\label{lem:updownprimeends}
Let $\p, \q \in \L(U)$ be accessible prime ends and let $p, q$ be their principal points, respectively. Suppose $\p \prec \q$.
\begin{itemize}
\item If $(q)_1 < \min \p$ then $[w(\p, \q)] = -1$.
\item If $\max \q < (p)_1$ then $[w(\p, \q)] = 1$.
\end{itemize}
\end{lemma}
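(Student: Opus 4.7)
The plan is to deduce both statements as essentially direct translations of Lemma \ref{lem:updownhairs}, using the freedom we have in choosing lead lines that realize the principal points of $\p$ and $\q$.

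For the first bullet, I first observe that the hypothesis $(q)_1 < \min \p$ forces $p \neq q$ (since $(p)_1 \geq \min \p > (q)_1$), so we are within the hypothesis of the definition of $w(\p,\q)$. Since $\min \p$ is defined as the supremum of $\min(\gamma)_1$ over lead lines $\gamma \in \Gamma(\p)$, I pick a lead line $\gamma_{\p} \in \Gamma(\p)$ with
\[
(q)_1 < \min (\gamma_{\p})_1 \le \min \p.
\]
Next, by Lemma \ref{lem:primeends1}(\ref{item:leadlinesdisjoint}), I choose a lead line $\gamma_{\q} \in \Gamma(\q)$ disjoint from $\gamma_{\p}$ (disjoint even at landing points because $p \neq q$). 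The topological characterization of the order from Section \ref{subsec:lineend} then guarantees, since $\p \prec \q$, that $(\gamma_{\p}(0))_1 < (\gamma_{\q}(0))_1$. So the pair $(\gamma_{\p}, \gamma_{\q})$ fits the hypotheses of Lemma \ref{lem:updownhairs}, and since $(\gamma_{\q}(1))_1 = (q)_1 < \min(\gamma_{\p})_1$, that lemma gives $[w(\gamma_{\p}, \gamma_{\q})] = -1$. The preceding proposition on independence of the choice of lead lines then yields $[w(\p,\q)] = -1$.

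For the second bullet, the strategy is symmetric but now I exploit that $\max \q = \inf_{\gamma \in \Gamma(\q)} \max(\gamma)_1$. Using $\max \q < (p)_1$, I select $\gamma_{\q} \in \Gamma(\q)$ with $\max (\gamma_{\q})_1 < (p)_1$, and then pick $\gamma_{\p} \in \Gamma(\p)$ disjoint from $\gamma_{\q}$ via Lemma \ref{lem:primeends1}(\ref{item:leadlinesdisjoint}). Again $\p \prec \q$ gives $(\gamma_{\p}(0))_1 < (\gamma_{\q}(0))_1$, and the second case of Lemma \ref{lem:updownhairs} applied to $(\gamma_{\p}, \gamma_{\q})$ yields $[w(\gamma_{\p}, \gamma_{\q})] = 1$, hence $[w(\p,\q)] = 1$.

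I do not expect any substantial obstacle: the machinery built in the preceding proposition (independence on the choice of lead lines) and in Lemma \ref{lem:primeends1} (the existence of disjoint lead lines realizing two prime ends) reduces everything to applying Lemma \ref{lem:updownhairs} once. The only point requiring a little care is that we must select the lead line for $\p$ (respectively $\q$) before invoking Lemma \ref{lem:primeends1}(\ref{item:leadlinesdisjoint}), so that the $\min$ (respectively $\max$) condition is preserved; but since the lemma only asks for a disjoint partner lead line, no conflict arises.
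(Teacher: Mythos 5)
Your proof is correct and is exactly the "straightforward corollary" the paper has in mind: pick a lead line realizing (within $\varepsilon$) the $\sup$ or $\inf$ in the definition of $\min\p$ or $\max\q$, complete it to a disjoint pair, invoke the independence of $w(\p,\q)$ on the choice of lead lines, and apply Lemma \ref{lem:updownhairs}. The only thing you slightly overstate is that Lemma \ref{lem:primeends1}(\ref{item:leadlinesdisjoint}) as written asserts the existence of a disjoint \emph{pair}, not a disjoint partner for a pre-selected lead line; the stronger fact you use is true (a lead line separates $\widetilde{U}$, and since $p\neq q$ the prime end $\q$ and the point $q$ lie off $\gamma_\p$, so a lead line to $\q$ can be chosen on one side), but it deserves a word of justification rather than a bare citation of the lemma.
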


\section{Accessible periodic points}

The goal of this section is to familiarize the reader with the tools introduced in the previous sections and show how can they be used
in practice. We will prove that the rotation number of any accessible periodic point must be equal to the prime end rotation number.
As was mentioned in the introduction, other different approaches to this result can be found in \cite{cartwright}, \cite{yorke} and \cite{duke}.

\begin{theorem}\label{thm:accessibleperiodic}
Suppose $f \colon \A \to \A$ is an orientation--preserving homeomorphism, $X$ is a non--empty closed invariant set such that $\A \setminus X$
is homeomorphic to $\A$ and $F$ and $\X$ are lifts of $f$ and $X$, respectively.
Let $q$ be a lift of a periodic point in $X$ and let $a, b$ be integers such that $F^b(q) = T^a(q)$.
Then $\widehat{\rho}(F, X) = a/b$.
\end{theorem}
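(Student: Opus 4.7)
The plan is to reduce the statement to showing that a certain homeomorphism of the line of prime ends has rotation number zero. Consider $G := \F^b \circ \T^{-a} \colon \L(U) \to \L(U)$. Because $\F$ and $\T$ commute, $G$ also commutes with $\T$ and is thus a lift of $\widehat{f}^b \colon \P(U) \to \P(U)$, so $G$ has a well-defined rotation number $\rho(G) \in \R$. A direct computation yields $\rho(G) = b\,\widehat{\rho}(F,X) - a$, so the theorem amounts to showing $\rho(G) = 0$.

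Assuming $q$ is accessible, I fix an accessible prime end $\p \in \L(U)$ whose principal point is $q$, and let $\P_q \subset \L(U)$ denote the set of all accessible prime ends with principal point $q$. The key observation is that $G$ preserves $\P_q$: on principal points $G$ acts as $F^b \circ T^{-a}$, which fixes $q$ because $F^b(q) = T^a(q)$. Consequently, the entire $G$-orbit of $\p$ lies in $\P_q$, and the theorem will follow once I show that $\P_q$ is bounded as a subset of $\L(U) \cong \R$. Indeed, an orientation-preserving homeomorphism of $\R$ that commutes with integer translation and admits a bounded orbit must have rotation number zero, since otherwise every orbit diverges to $\pm\infty$.

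To bound $\P_q$, I would choose a lead line $\gamma$ for $\p$ whose projection to $\A$ is a simple arc, equivalently $\gamma \cap T\gamma = \emptyset$; this is precisely the kind of lead line used in the final paragraph of Section \ref{sec:primeends}. Let $D \subset \widetilde{U}$ be the domain enclosed by $\gamma$ and $T\gamma$, so that the translates $\{T^k(D)\}_{k \in \Z}$ partition $\widetilde{U}$ modulo the $T$-orbit of $\gamma$. Because $\gamma$ is the common boundary of $D$ and $T^{-1}(D)$ and ends at $q$, the point $q$ belongs to $\overline{T^k(D)}$ only for $k \in \{-1, 0\}$. Hence the germ at $q$ of any lead line representing an accessible prime end $\p' \in \P_q$ with $\p' \neq \p$ must enter either $D$ or $T^{-1}(D)$, and since both domains are connected and meet $\partial\AA$, one can produce a full representative lead line confined to that closed domain. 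By the equivalence recalled at the end of Section \ref{sec:primeends}, this forces $\p \prec \p' \prec \T(\p)$ or $\T^{-1}(\p) \prec \p' \prec \p$, so that $\P_q \subset [\T^{-1}(\p), \T(\p)]$.

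Combining, the $G$-orbit of $\p$ is contained in the bounded interval $[\T^{-1}(\p), \T(\p)]$, whence $\rho(G) = 0$ and $\widehat{\rho}(F,X) = a/b$. The main obstacle of this plan is the boundedness claim for $\P_q$: one must both invoke the existence of a simple-projection lead line for $\p$ and, more delicately, show that every accessible prime end at $q$ admits a lead line whose germ at $q$ lies on a single side of $\gamma$, ruling out representatives that oscillate across $\gamma$ infinitely often near $q$. This latter step should follow by exploiting the flexibility in choosing lead lines guaranteed by Lemma \ref{lem:primeends1} together with the local topology of $\widetilde{U}$ near the accessible point $q$.
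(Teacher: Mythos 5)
Your overall strategy is sound and genuinely different from the paper's. Reducing the claim to showing that $G = \F^b\circ\T^{-a}$ has rotation number zero, and then trying to produce a bounded $G$--orbit inside the set $\P_q$ of accessible prime ends with principal point $q$, is a clean idea. The reduction itself (commutativity, $\rho(G) = b\,\widehat{\rho}(F,X) - a$, $G$ preserving $\P_q$, bounded orbit $\Rightarrow$ zero rotation number) is all correct. The paper instead proceeds by contradiction via the relative winding number: from $\rho<0$ one can translate $\q$ by a large power $\T^m$ and iterate by $\F^n$ to obtain a prime end that is \emph{below} $\q$ in $\L(U)$ yet whose principal point $T^m(q)$ is far to the \emph{right} of $\gamma_{\q}$, and Lemma~\ref{lem:updownprimeends} together with a direct computation of $w$ through concatenations yields incompatible values $+1$ and $-1$. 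That argument never needs to control the whole set $\P_q$, only a single prime end and its translates/iterates.

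The gap in your proposal is exactly the step you flag as ``the main obstacle,'' but the danger there is not the one you describe. You assert that ``$q$ belongs to $\overline{T^k(D)}$ only for $k\in\{-1,0\}$'' because $\gamma$ is the common boundary of $D$ and $T^{-1}(D)$ and lands at $q$. That reasoning only shows $q\in\overline{D}\cap\overline{T^{-1}(D)}$; it says nothing about the other translates. The closure of $T^m(D)$ in $\AA$ contains a portion of $\X$ (not just $T^m(\gamma)\cup T^{m+1}(\gamma)\cup\partial\AA$), and nothing prevents that $\X$--part from reaching back to $q$. Worse, what you actually need is that no \emph{lead line landing at $q$} can live inside $T^m(D)$ for $|m|\ge 1$: such a lead line would be disjoint from every $T^k(\gamma)$ yet end at $q$ after traveling through a faraway fundamental domain, and this is consistent with the local picture near $q$. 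Ruling it out is a global planar-topology statement about $\X$ and $\widetilde{U}$, closer in spirit to Lemma~\ref{lem:boundeddomain}/Remark~\ref{rmk:boundeddomain} than to the local flexibility of Lemma~\ref{lem:primeends1} that you invoke. The ``oscillation across $\gamma$ near $q$'' you worry about is a local nuisance that Lemma~\ref{lem:primeends1}(\ref{item:leadlinesdisjoint}) already handles; the genuine issue is whether $\P_q$ can meet translates $[\T^m(\p),\T^{m+1}(\p)]$ for arbitrarily large $|m|$, and your argument neither proves it cannot nor reduces it to any established lemma. Until that is settled, the claim $\rho(G)=0$ does not follow, since a non-zero rotation number of $G$ is perfectly compatible with its orbit of $\p$ lying in an unbounded $\P_q$.
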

\begin{proof}
Replace $F$ by $T^{-a}F^b$ so $q$ becomes a fixed point of $F$, it has rotation number equal to 0.
Argue by contradiction and assume $\rho = \widehat{\rho}(F, X) < 0$ (the positive case can be addressed by replacing $F$ by $F^{-1}$).

Denote $\q$ an accessible prime end whose principal point is $q$ and $\gamma_{\q}$ a lead line that determines $\q$.
Take a positive integer $m$ large enough so that $\max(\gamma_{\q})_1 < (T^m(q))_1$. Since $\rho < 0$, we can find
a positive integer $n$ such that $\F^n \T^m(\q) \prec \q$.
Notice that the principal point of $\F^n \T^m (\q)$ is $F^nT^m(q) = T^mF^n(q) = T^m(q)$.
Lemma \ref{lem:updownprimeends} yields $[w(\F^n\T^m(\q), \q)] = 1$ and since both principal points lie in the same horizontal line
we actually have $w(\F^nT^m(\q), \q) = 1$.

Now, we will compute the previous winding number using another approach and we will get a different outcome.
The new element in the picture is $\F^n(\q)$, an accessible prime end that satisfies
$$
\F^n(\q) \prec \F^n\T^m(\q) \prec \q.
$$
Consider a lead line $\gamma_0$ determining $\F^n(\q)$ such that $T^m(\gamma_0) \cap \gamma_0 = \emptyset$.
Such a lead line can be obtained simply by lifting a suitable lead line in the annulus.
Denote $\gamma_1 = T^m(\gamma_0)$. Since $\F^n(\q) \prec \F^n\T^m(\q)$, we have that $(\gamma_0(0))_1 < (\gamma_1(0))_1$.
The map $\phi_0\colon [0,1] \to S^1$ defined by
\[
\phi_0(t) = \frac{\gamma_1(t) - \gamma_0(t)}{||\gamma_1(t) - \gamma_0(t)||}
\]
\noindent is constant and so is any of its lifts.
Therefore, $w(\F^n(\q), \F^n\T^m(\q)) = w(\gamma_0, \gamma_1) = 0$.
Define
\begin{align*}
\phi_1(t) =
\begin{dcases}
\frac{\gamma_1(0) - \gamma_0(2t)}{||\gamma_1(0) - \gamma_0(2t)||} & 0 \le t \le 1/2 \\
\frac{\gamma_1(2t-1) - q}{||\gamma_1(2t-1) - q||} & 1/2 \le t \le 1.\\
\end{dcases}
\end{align*}
The map $\phi_1$ is homotopic to $\phi_0$ relative to $\{0,1\}$ so the lift $\widetilde{\phi_1}$ of $\phi_1$
with $\widetilde{\phi_1}(0) = 0$ satisfies $\widetilde{\phi_1}(1) = 0$.

Let $\gamma_2$ be a lead line that determines $\q$ and is disjoint to $\gamma_0, \gamma_1$ except for its endpoint $q = \gamma_2(1) = \gamma_0(1)$.
It satisfies $(\gamma_1(0))_1 < (\gamma_2(0))_1$.
Define
\begin{align*}
\phi_2(t) =
\begin{dcases}
\frac{\gamma_1(0) - \gamma_2(2t)}{||\gamma_1(0) - \gamma_2(2t)||} & 0 \le t \le 1/2 \\
\frac{\gamma_1(2t-1) - q}{||\gamma_1(2t-1) - q||} & 1/2 \le t \le 1.\\
\end{dcases}
\end{align*}
Since multiplication by a constant ($-1$ in this case) does not affect to the computation of the winding number,
we deduce that $w(\F^n\T^m(\q), \q) = w(\gamma_1, \gamma_2) = \widetilde{\phi_2}(1)$, where $\widetilde{\phi_2}$ is the lift of $\phi_2$ that vanishes at 0.

Given two maps $\psi, \psi'\colon [0,1] \to S^1$, $\psi(1) = \psi'(0)$, define the reverse $\psi^{R}$ and the concatenation $\psi\star\psi'$
by $\psi^{R}(t) = \psi(1-t)$ and $\psi \star \psi'(t) = \psi(2t)$ if $0 \le t \le 1/2$ and
$\psi \star \psi'(t) = \psi'(2t-1)$ if $1/2 \le t \le 1$.
It is easy to show that the lifts to $\R$ of these maps that vanish at 0 satisfy: $\widetilde{\psi^{R}}(1) = - \widetilde{\psi}(1)$ and
$\widetilde{\psi \star \psi'}(1) = \widetilde{\psi}(1) + \widetilde{\psi'\phantom{\,}}(1)$.

\begin{figure}[htb]
\begin{center}
\includegraphics[scale = 0.8]{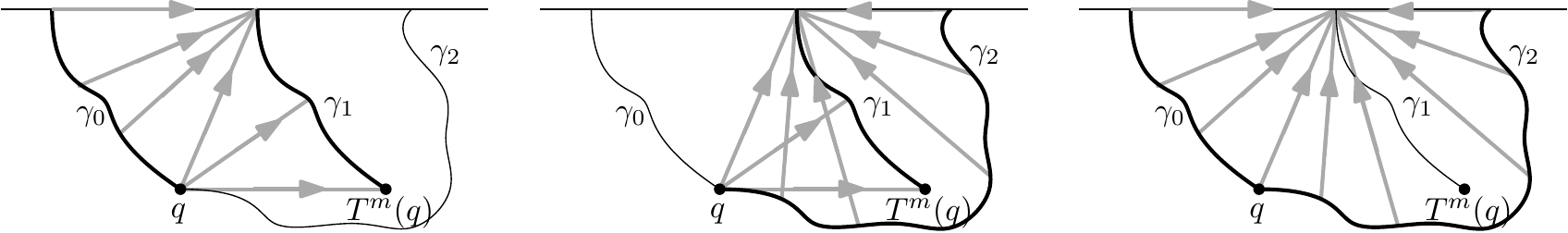}
\end{center}
\caption{Gray arrows display maps $\phi_1$ (left), $\phi_2$ (middle), $\phi_4$ (right).}
\label{fig:periodicpoints}
\end{figure}

Consider the lift to $\R$ of the concatenation of $\phi_1$ and $\phi_2^{R}$ that vanishes at 0, say $\widetilde{\phi_3}$.
We have that $\widetilde{\phi_3}(1) = \widetilde{\phi_1}(1) - \widetilde{\phi_2}(1)$.
The map $\phi_3$ is homotopic, relative to $\{0,1\}$, to a map $\phi_4$
defined by

\centerline{
\noindent $\phi_4(t) = \displaystyle \frac{\gamma_1(0) - \gamma_0(2t)}{||\gamma_1(0) - \gamma_0(2t)||}$ if $t \in [0,1/2]$
and $\phi_4(t) = \displaystyle \frac{\gamma_1(0) - \gamma_2(2-2t)}{||\gamma_1(0) - \gamma_2(2-2t)||}$ if $t \in [1/2,1]$.
}

\noindent
The lift of $\phi_4$ that vanishes at 0, $\widetilde{\phi_4}$, satisfies $\widetilde{\phi_4}(1) = 1$.
Therefore,

\smallskip

\centerline{
$1 = \widetilde{\phi_4}(1) = \widetilde{\phi_3}(1) = \widetilde{\phi_1}(1) - \widetilde{\phi_2}(1)= -\widetilde{\phi_2}(1)$, and $w(\F^n\T^m(\q), \q) = \widetilde{\phi_2}(1) = -1$.
}

\smallskip
\noindent This contradicts our first computation.

\end{proof}

\section{Rotational horseshoe}\label{sec:horseshoe}

Before we pass to the proof of the results presented in this article, let us describe a classical example in
the study of rotation in surface dynamics: the rotational horseshoe.
This example shows that an accessible point $q$ with bounded forward orbit may not have well--defined rotation number,
i.e. $\lim_{n \to +\infty} (F^n(q))_1/n$ may not exist. At the same time, it is an example of how accessible points with different rotation numbers can coexist.

Consider an orientation--preserving homeomorphism of $\A$ whose action is illustrated in Figure \ref{fig:horseshoe}.
The square $D$ is first squashed in the vertical direction, then stretched into a long strip, $f(D)$, that meets $D$ in two rectangles.
The restriction of $f$ to $D$ is equal to the classical horseshoe map. The set $\cap_{n \le 0} f^n(D)$ is a Cantor set of vertical lines
$\{V_a: a \in \{0,1\}^{\N}\}$ such that $f(V_a) \subset V_{\sigma(a)}$, where $\sigma\colon \{0,1\}^{\N} \to \{0,1\}^{\N}$
is the shift map.

\begin{figure}[htb]
\begin{center}
\includegraphics[scale = 0.55]{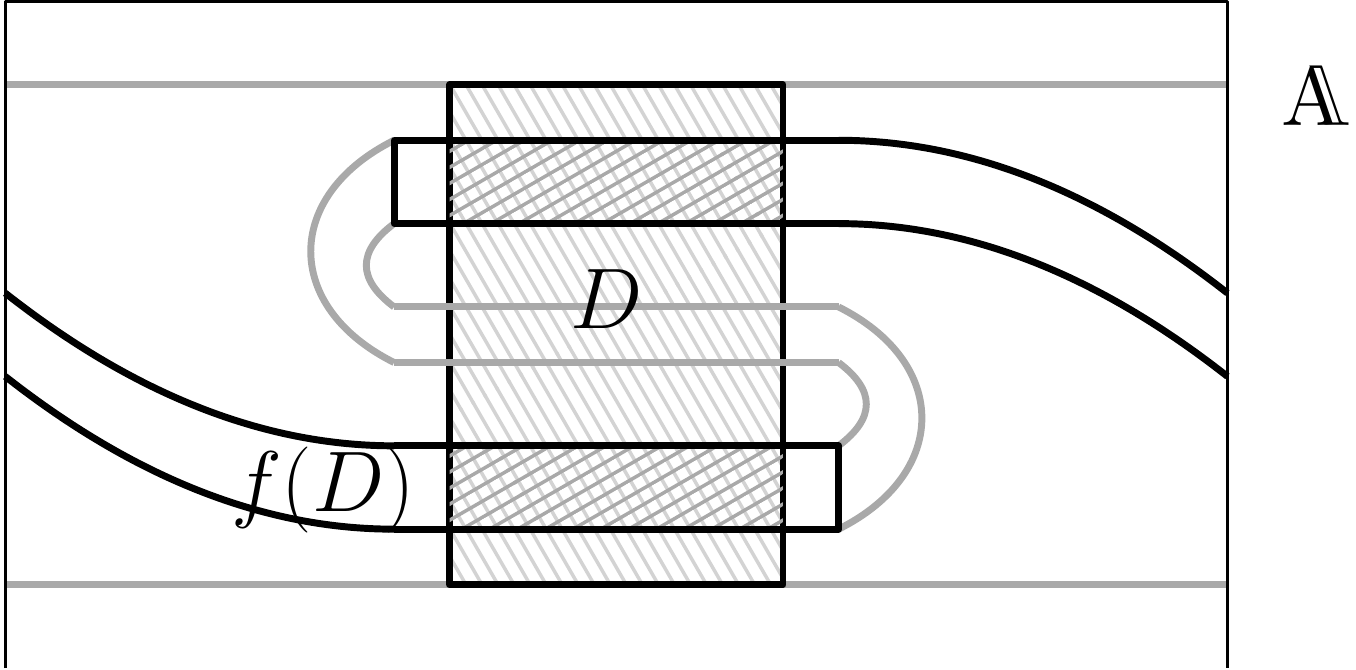}
\end{center}
\caption{Left and right sides are identified. $D$ is shadowed and $D \cap f(D)$ is doubly shadowed.}
\label{fig:horseshoe}
\end{figure}

The map $f$ has a lift $F \colon \AA \to \AA$ defined by imposing that $F(\widetilde{D})$ meets $\widetilde{D}$
and $T(\widetilde{D})$, where $\widetilde{D}$ denotes a fixed lift of $D$. The rotational behavior of a point of a vertical line $V_a$ is
easily expressed in terms of its codification $a \in \{0,1\}^{\N}$:

If $x$ belongs to the lift of $V_a$ contained in $\widetilde{D}$ then $F^n(x)$ lies in $T^{s_n}(\widetilde{D})$,
where $s_n = a_0 + \ldots + a_{n-1}$ and $a = (a_0,a_1, \ldots, a_n, \ldots)$. As a consequence,
the difference between $|(F^n(x))_1 - (x)_1|$ and $a_0 + \ldots + a_{n-1}$ is bounded by 1 in absolute value and
$$
\liminf_{n \to +\infty} \frac{(F^n(x))_1}{n} = \liminf_{n \to +\infty} \frac{a_0 + \ldots a_{n-1}}{n} \enskip \enskip \text{and}
\enskip \enskip \limsup_{n \to +\infty} \frac{(F^n(x))_1}{n} = \limsup_{n \to +\infty} \frac{a_0 + \ldots a_{n-1}}{n}.
$$

\noindent
It is easy to construct a sequence $b = (b_0, b_1, \ldots) \in \{0,1\}^{\N}$ for which
$$\liminf_{n \to +\infty} \frac{b_0 + \ldots b_{n-1}}{n} \neq \limsup_{n \to +\infty} \frac{b_0 + \ldots b_{n-1}}{n}.$$
Then, the previous discussion shows that for any point $x$ in the lift of $V_b$
$$
\lim_{n \to +\infty} \frac{(F^n(x))_1}{n} \enskip \text{does not exist,}
$$
i.e. those points do not have a well--defined (forward) rotation number.
In a similar fashion, by choosing a sequence in $\{0,1\}^{\N}$ in which the density of 1's is equal to $\lambda$ we obtain points whose (forward) rotation number is equal to any prescribed real number $\lambda$ in $[0,1]$.

The rotational horseshoe described above can be made part of a dynamics as the ones considered in this work.
The union of $D$ and the rectangular region with gray border in Figure \ref{fig:horseshoe} is an attracting closed annulus $A$, $f(A) \subset A$.
The limit $K = \cap_{n \ge 0} f^n(A)$ is an essential annular continuum that splits $\A$ in two regions.
Define $X$ as the union of $K$ and the lower region, its complement $\A \setminus X$ contains $\partial \A$ and is homeomorphic to $\A$.
Clearly, every vertical line $V_a$ contains at least one accessible point of $X$. For the previous choice
of $b \in \{0,1\}^{\N}$, no accessible point in the lift of $V_b$ has a well--defined forward rotation number under $F$.
Other clever choices of binary sequence provide accessible points with any prescribed forward rotation number in $[0,1]$.

\section{Transverse drift}\label{sec:transverse}

Recall that a semi--orbit $\O^+(p)$ or $\O^-(p)$ in $\AA$ is called bounded if the projection to $\A$ is relatively compact.
The following statement corresponds to Theorem \ref{thm:resumen} (\ref{item:1}) in the introduction.

\begin{theorem}\label{thm:diffdirections}
There do not exist accessible points $p, q \in \X$ such that $\O^-(p)$ and $\O^+(q)$ are bounded and
\begin{align*}
\lim_{n \to -\infty} (F^n(p))_1 - n \rho = -\infty \enskip \text{and} \enskip
\lim_{n \to +\infty} (F^n(q))_1 - n \rho = +\infty,\\
\text{or} \enskip\lim_{n \to -\infty} (F^n(p))_1 - n \rho = +\infty \enskip \text{and} \enskip
\lim_{n \to +\infty} (F^n(q))_1 - n \rho = -\infty,
\end{align*}
where $\rho = \widehat{\rho}(F, X)$.
\end{theorem}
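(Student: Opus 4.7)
We prove Case A; Case B is equivalent to Case A for $F^{-1}$ (after swapping the roles of $p$ and $q$), since the prime end rotation number of $F^{-1}$ is $-\rho$. The strategy is to use the transverse drift together with the rotation number $\rho$ of $\F$ on $\L(U)$ to construct a pair of accessible prime ends exhibiting the ``perverse'' configuration of Lemma \ref{lem:updownprimeends}, and then derive a contradiction via an alternative computation of the same relative winding number.

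Fix accessible prime ends $\p, \q \in \L(U)$ with principal points $p, q$ and lead lines $\gamma_{\p}, \gamma_{\q}$ projecting to simple arcs in $\A$. Set $A_n := n\rho - (F^n p)_1$ and $B_m := (F^m q)_1 - m\rho$; by hypothesis, $A_n \to +\infty$ as $n \to -\infty$ and $B_m \to +\infty$ as $m \to +\infty$. Since $\F|_{\L(U)}$ is a lift of a circle homeomorphism of rotation number $\rho$, the classical bound $|\tau(\F^j \e) - \tau(\e) - j\rho| \le 1$ holds for all $j \in \Z$ and accessible $\e \in \L(U)$, where $\tau$ is the $\R$-coordinate on $\L(U)$ inherited from $\T$. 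For $n \le 0 \le m$ with $|n|, m$ large, pick the integer $k = k(n, m)$ with $\F^m \q \prec \T^k \F^n \p \prec \T(\F^m \q)$; then $k = (m - n)\rho + O(1)$.

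The principal points have $x$-coordinates $(T^k F^n p)_1 = m\rho - A_n + O(1)$ and $(F^m q)_1 = m\rho + B_m$, whose difference $A_n + B_m + O(1)$ tends to $+\infty$: the $\L(U)$-left prime end's principal point lies arbitrarily far to the right of the $\L(U)$-right one's. The boundedness of $\O^+(q)$ together with the simplicity of $\pi(\gamma_{\q})$ lets us control the lead lines of $\F^m \q$ to achieve $\min \F^m \q \ge (F^m q)_1 - L$ for some $L$ independent of $m$. Thus $(T^k F^n p)_1 < \min \F^m \q$ for $|n|, m$ large, and Lemma \ref{lem:updownprimeends} forces
\[
[w(\F^m \q,\, \T^k \F^n \p)] = -1.
\]

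To obtain a contradiction, we evaluate $w(\F^m \q, \T^k \F^n \p)$ a second way. Using the $T$-invariance of the relative winding number together with the boundedness of $\O^-(p)$, after suitable $\T$-translations both lead lines $F^m \gamma_{\q}$ and $T^k F^n \gamma_{\p}$ fit into a common bounded strip of $\AA$, in which a homotopy through disjoint hanging arcs whose landing point $x$-coordinates never coincide brings them to a ``non-wrapping'' reference pair whose winding number has integer part $0$. By Lemma \ref{lem:endhomotopy}, $[w]$ is preserved along such a homotopy, contradicting $-1$. The main technical obstacle is constructing this homotopy rigorously: it requires exploiting the boundedness of both $\O^-(p)$ and $\O^+(q)$ to keep the lead lines confined, and the simplicity of the projections $\pi(\gamma_{\p}), \pi(\gamma_{\q})$ to exclude any extraneous wrapping in the limit configuration.
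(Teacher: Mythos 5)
Your high-level plan --- use Lemma \ref{lem:updownprimeends} to detect the ``perverse'' configuration in which a prime end that comes later in $\L(U)$ has a principal point lying far to its left --- correctly identifies the starting move of the paper's argument. But both of your evaluations of the relative winding number have gaps, and the second one is essentially fatal.

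For the first evaluation you assert a uniform bound $\min \F^m\q \ge (F^m q)_1 - L$ with $L$ independent of $m$, justified only by boundedness of $\O^+(q)$ and simplicity of $\pi(\gamma_{\q})$. This does not follow: boundedness of a semi-orbit controls the radial coordinate of the iterates, not the horizontal extent of the lead lines $F^m\gamma_{\q}$, which may wind around the annulus more and more as $m$ grows, and the supremum in the definition of $\min\F^m\q$ gives no uniform control. The paper sidesteps this by \emph{fixing} one iterate $\q_{k_0}$ first, so that $\min\q_{k_0}$ is one finite number, and then letting the backward iterate index $n$ go to $-\infty$; your order of quantifiers is the wrong one.

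The second evaluation is where the proposal really breaks down. You want to conclude $[w(\F^m\q, \T^k\F^n\p)] = 0$ by homotoping the two lead lines, within a ``common bounded strip,'' to a ``non-wrapping reference pair.'' This cannot succeed. The hypotheses of Lemma \ref{lem:endhomotopy} force the homotopy to keep the two landing points' first coordinates from ever coinciding, hence from crossing; so the perverse ordering (later prime end, principal point far to the left) persists throughout, and by Lemma \ref{lem:updownprimeends} any admissible terminal configuration still has $[w] = -1$, not $0$. There is simply no homotopy of the kind you want, and ``the lead lines fit in a bounded strip'' is neither true in general nor sufficient. The paper derives the contradiction by an entirely different mechanism: it computes a \emph{second} winding number with the opposite value $+1$ at a \emph{different} pair of prime ends, obtained from the first pair by iterating $\F$ a common number of times, and then uses Proposition \ref{prop:toofar} --- the observation that $[w]$ is invariant under one application of $\F$ as long as the principal points stay horizontally farther apart than $D(e) + D(e')$, which the drift and the bound $C = \sup D$ over the bounded semi-orbits guarantee step by step --- to show the two winding numbers must agree, giving $-1 = +1$. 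This iteration step is the crux of the argument and is entirely missing from your proposal; without it, or a substitute for it, the contradiction never materializes.
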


\smallskip
The homotopy between $f$ and the identity map lifts to a homotopy $\{h_s\}_{s \in [0,1]}$.
It follows that the displacement of a point $x$ under the homotopy can be bounded
$$|(x)_1 - (h_s(x))_1| < D(x), \enskip \forall s \in [0,1],$$
by a map $D\colon \AA \to \R^+$ which is invariant under translation: $D(T(x)) = D(x)$.
It is important to remark that $D$ has a well--defined maximum in lifts of compact sets of the annulus
and, as a consequence, the restriction of $D$ to bounded orbits or semi--orbits is bounded.


As an application of Lemma \ref{lem:endhomotopy} we have:

\begin{proposition}\label{prop:toofar}
Let $\e, \e'$ be accessible prime ends in $\X$ and $e, e'$ their principal points, respectively.
If $|(e)_1 - (e')_1| > D(e) + D(e')$ then
$$[w(\e, \e')] = [w(\F(\e), \F(\e'))].$$
\end{proposition}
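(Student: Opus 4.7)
The plan is to transport the pair $(\e, \e')$ to $(\F(\e), \F(\e'))$ along the canonical isotopy $\{h_s\}_{s \in [0,1]}$ (with $h_0 = \id$ and $h_1 = F$) and then invoke Lemma \ref{lem:endhomotopy} directly. The quantitative hypothesis $|(e)_1 - (e')_1| > D(e) + D(e')$ will be precisely what prevents the landing points from exchanging sides during the isotopy; that is the only genuine obstacle.

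First, using Lemma \ref{lem:primeends1}(\ref{item:leadlinesdisjoint}) and assuming without loss of generality that $\e \prec \e'$, I would choose disjoint lead lines $\gamma_{\e}, \gamma_{\e'}$ determining $\e, \e'$ with landing points $e, e'$ and with $(\gamma_{\e}(0))_1 < (\gamma_{\e'}(0))_1$. Set $\gamma_s := h_s \circ \gamma_{\e}$ and $\gamma_s' := h_s \circ \gamma_{\e'}$. At $s = 1$ these become $F \circ \gamma_{\e}$ and $F \circ \gamma_{\e'}$, which are lead lines determining $\F(\e)$ and $\F(\e')$ by the definition of the extended map. The easy hypotheses of Lemma \ref{lem:endhomotopy} can then be verified routinely: each $h_s$ is a homeomorphism of $\AA$ that preserves $\partial \AA$ and restricts there to an orientation-preserving homeomorphism of $\R$, so every $\gamma_s, \gamma_s'$ is a hanging arc and $(\gamma_s(0))_1 < (\gamma_s'(0))_1$ persists throughout; injectivity of $h_s$ together with the disjointness of the images of $\gamma_{\e}, \gamma_{\e'}$ yields $\gamma_s(t) \neq \gamma_s'(t)$ for every $s, t \in [0,1]$.

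The substantive step is checking $(\gamma_s(1))_1 \neq (\gamma_s'(1))_1$ for all $s$. By the definition of $D$, the point $(\gamma_s(1))_1 = (h_s(e))_1$ lies in $((e)_1 - D(e), (e)_1 + D(e))$, and similarly $(\gamma_s'(1))_1$ lies in $((e')_1 - D(e'), (e')_1 + D(e'))$; the hypothesis $|(e)_1 - (e')_1| > D(e) + D(e')$ forces these two intervals to be disjoint, so the horizontal coordinates of the landing points can never coincide. Lemma \ref{lem:endhomotopy} now applies and yields
\[
[w(\e, \e')] = [w(\gamma_{\e}, \gamma_{\e'})] = [w(F \circ \gamma_{\e}, F \circ \gamma_{\e'})] = [w(\F(\e), \F(\e'))],
\]
as required. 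The separation of landing points is the single nontrivial verification, and it is built in directly by the numerical hypothesis; everything else is a routine check that the isotopy $\{h_s\}$ is well behaved with respect to the structures entering the definition of relative winding number.
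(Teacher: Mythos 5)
Your proof is correct and follows essentially the same route as the paper: transport a pair of disjoint lead lines along the isotopy $\{h_s\}$, use the inequality $|(e)_1 - (e')_1| > D(e) + D(e')$ to show the angular coordinates of the moving landing points never coincide, and invoke Lemma \ref{lem:endhomotopy}. The paper's version is terser (it only records the landing-point estimate and leaves the remaining routine checks implicit), but the underlying argument is identical.
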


\begin{proof}
Take $\gamma, \gamma'$ disjoint lead lines that determine $\e, \e'$, respectively.
For any $s \in [0,1]$ we have,
$$|(h_s(\gamma(1)))_1 - (h_s(\gamma'(1)))_1| \ge |(e)_1 - (e')_1| - |(e)_1 - (h_s(\gamma(1)))_1| - |(h_s(\gamma'(1)))_1 - (e')_1|.$$
By definition of $D$, the last two terms are bounded by $D(e)$ and $D(e')$, respectively, and we deduce that
$(h_s(\gamma(1)))_1 \neq (h_s(\gamma'(1)))_1$. Lemma \ref{lem:endhomotopy} yields the result.
\end{proof}

\begin{proof}[ of Theorem \ref{thm:diffdirections}]
Let us proceed by contradiction.
Assume $p, q$ as in the first alternative of the statement exist,
$$\lim_{n \to -\infty} (F^n(p))_1 - n \rho = -\infty \enskip \text{and} \enskip
\lim_{n \to +\infty} (F^n(q))_1 - n \rho = +\infty.$$
The second alternative follows from the first one by replacing $f$ by $f^{-1}$.
Let $\p, \q$ be accessible prime ends whose principal points are $p$ and $q$, respectively.

Choose $\c$ an accessible prime end of $\widetilde{U}$ as a point of reference in $\L(U)$.
Choose $\p_{n}, \q_n$ prime ends obtained from $\F^{n}(\p), \F^n(\q)$ by translation under $\T$
so that
\[\T^{-1} \c \preceq \q_{n} \prec \c \prec \T \c \preceq \p_n \prec \T^2 \c,
\]
\noindent and denote by $p_{n}, q_n$ their principal points, respectively.
Assume without loss of generality that $\p_0 = \p, \q_0 = \q$.

\medskip
\noindent
\emph{Claim:} $\lim_{n \to -\infty} (p_{n})_1 = - \infty, \enskip \lim_{n \to + \infty} (q_n)_1 = + \infty$.

\smallskip

Indeed, if $\q_n = \T^{-m_n} \F^n(\q)$ then the numbers $m_n - n \rho$
are bounded by 1. 
Since

\smallskip
\centerline{$(q_n)_1 + (m_n - n \rho) = (F^n(q))_1 - n \rho$,}

\noindent it follows that $\lim_{n \to +\infty}(q_n)_1 = +\infty$.
We can argue similarly for $p_{n}$.
\medskip

Set $C = \sup\{D(x): x \in \mathcal{O}^-(p) \cup \mathcal{O}^+(q)\} = \sup\{D(x): x \in \{p_{n}\}_{n \le 0} \cup \{q_n\}_{n \ge 0}\}$.
It is finite because both semi--orbits are bounded.

Choose first $k_0$ so that $(q_k)_1 > C + 1$ for every $k \ge k_0$.
By Lemma \ref{lem:updownprimeends}, if $(p_{n})_1 < \min \q_{k_0}$ then $[w(\q_{k_0}, \p_{n})] = -1$.
Take $n_0$ such that for every $n \le n_0$ the previous condition on the relative winding number is satisfied
and also $(p_n)_1 < -C - 1$.
If we now pick $k_1$ so that $(q_{k_1})_1 > 1 + \max \p_{n_0}$ we deduce also from Lemma \ref{lem:updownprimeends}
that $[w(\q_{k_1}, \T^j\p_{n_0})] = 1$ for every $j \in \{-1,0,1\}$. The configuration is sketched in Figure \ref{fig:transversedrift}.

\begin{figure}[htb]
\begin{center}
\includegraphics[scale = 0.9]{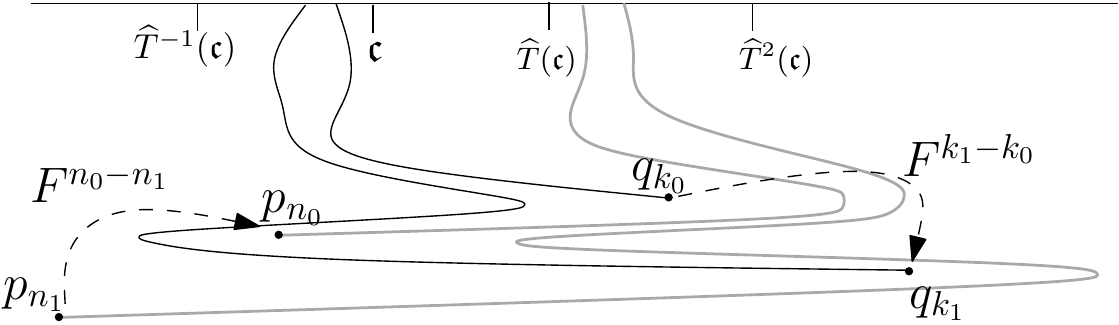}
\end{center}
\caption{Configuration of lead lines that determine $\q_{k_0}, \q_{k_1}$ (black) and $\p_{n_0}, \p_{n_1}$ (gray).}
\label{fig:transversedrift}
\end{figure}

To conclude the proof, set $n_1 = n_0 - (k_1 - k_0)$.
For any $0 \le i \le n_0-n_1 = k_1-k_0$, if $q_{k_0+i} = T^m F^i(q_{k_0})$
and $p_{n_1+i} = T^{m'} F^i(p_{n_1})$ then $|m - m'| \le |m - i\rho| + |m' - i\rho| \le 2$.
As a consequence,
$$(F^i(q_{k_0}))_1 - (F^i(p_{n_1}))_1 \ge (q_{k_0+i})_1 - (p_{n_1+i})_1 - 2 > 2C \ge D(F^i(q_{k_0})) + D(F^i(p_{n_1}))$$
 and we can apply Proposition \ref{prop:toofar} to obtain
$$[w(\F^i(\q_{k_0}),\F^i(\p_{n_1}))] = [w(\F^{i+1}(\q_{k_0}),\F^{i+1}(\p_{n_1}))].$$
Thus, $[w(\q_{k_0}, \p_{n_1})] = [w(\F^{k_1-k_0}\q_{k_0}, \F^{n_0-n_1} \p_{n_1})] = [w(\q_{k_1}, \T^j(\p_{n_0}))]$
for some $j \in \{-1,0,1\}$, which is absurd.
\end{proof}

\begin{remark}
The speed of convergence in the limits of Theorem \ref{thm:diffdirections} does not play a role.
\end{remark}

An immediate application of Theorem \ref{thm:diffdirections} yields this interesting corollary.

\begin{corollary}\label{cor:accessiblemustrotate}
If the full orbit of a point $x \in X$ is bounded and has well--defined rotation number, that is,
$\rho_x = \lim_{n \to \pm \infty} \left(F^n(\tilde{x})\right)_1 / n \in \R$
for a lift $\tilde{x}$ of $x$ and $\rho_x \neq \widehat{\rho}(F, X)$ then $x$ is not an accessible point of $X$.
\end{corollary}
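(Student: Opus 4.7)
The plan is to derive this corollary as a direct specialization of Theorem \ref{thm:diffdirections} by taking $p = q = \tilde{x}$. First I would verify that the accessibility of $x \in X$ passes to accessibility of $\tilde{x} \in \X$: any half--open arc $\gamma \colon [0,1) \to U$ whose closure lands at $x$ lifts to a half--open arc in $\widetilde{U}$, and by prescribing the starting fiber one can arrange for the lift to land at any chosen preimage of $x$. Hence $\tilde{x}$ is an accessible point of $\X$, and its full orbit $\O(\tilde{x})$ is bounded by hypothesis, so both $\O^-(\tilde{x})$ and $\O^+(\tilde{x})$ are bounded.

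Next I would unpack the hypothesis $\rho_x \neq \rho := \widehat{\rho}(F,X)$. Writing
$$(F^n(\tilde{x}))_1 - n\rho = n(\rho_x - \rho) + o(|n|),$$
we see that if $\rho_x > \rho$ then $(F^n(\tilde{x}))_1 - n\rho \to -\infty$ as $n \to -\infty$ and $\to +\infty$ as $n \to +\infty$, while if $\rho_x < \rho$ the two divergences are exchanged. In either case, setting $p = q = \tilde{x}$ produces accessible points realising precisely one of the drift configurations forbidden by Theorem \ref{thm:diffdirections}. That theorem then supplies the contradiction. No substantive obstacle is anticipated, since every ingredient (accessibility of $\tilde{x}$, boundedness of the two semi--orbits, opposite asymptotic drifts with respect to $n\rho$) follows immediately from the assumptions, and the work of excluding such pairs has already been carried out in the proof of Theorem \ref{thm:diffdirections}.
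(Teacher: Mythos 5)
Your proof is correct and coincides with the paper's intent: the paper presents this corollary as an ``immediate application'' of Theorem \ref{thm:diffdirections}, and you have simply made that application explicit by setting $p = q = \tilde{x}$, observing that $(F^n(\tilde{x}))_1 - n\rho = n(\rho_x - \rho) + o(|n|)$ diverges to opposite ends of $\R$ as $n \to \pm\infty$, and invoking the theorem to exclude precisely that configuration. The brief check that accessibility of $x$ lifts to accessibility of $\tilde{x} \in \X$ is a sound and appropriate detail.
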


Theorem \ref{thm:diffdirections} is not true if we replace $\lim$ by $\liminf$ or $\limsup$ in the statement.

\begin{theorem}\label{thm:exampletransversal}
Under our standing hypothesis, given any $\rho_0 \in \R$ we can construct an example of dynamics such that
$\widehat{\rho}(F, X) = \rho_0$ and there are accessible points $p, q \in \X$ with $\O^-(p)$, $\O^+(q)$ bounded and satisfying
\begin{align*}
\liminf_{n \to -\infty} (F^n(p))_1 -n\rho_0 = \liminf_{n \to +\infty} (F^n(q))_1 -n\rho_0 = -\infty &
\enskip\enskip\enskip \text{and}\enskip \\
\limsup_{n \to -\infty} (F^n(p))_1 -n\rho_0 = \limsup_{n \to +\infty} (F^n(q))_1 -n\rho_0 = +\infty & \\
\end{align*}
\end{theorem}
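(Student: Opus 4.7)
The construction adapts the rotational horseshoe of Section \ref{sec:horseshoe}. The novelty is that we need, for each prescribed $\rho_0$, (i) a variant in which the prime end rotation number equals $\rho_0$, and (ii) a symbolic prescription of accessible orbits whose horizontal displacements oscillate unboundedly on both sides of $n\rho_0$, rather than drifting linearly at a fixed rate. The hard part will be simultaneously tuning the prime end rotation to $\rho_0$ while keeping enough Cantor-set points accessible from the upper complementary region $U := \A \setminus X$. Concretely, I would fix integers $a_- < \rho_0 < a_+$ and build a horseshoe-type attractor $K$ inside an attracting annulus $A \subset \A$ whose two branches, in a chosen lift $F$, translate the horseshoe square $D$ horizontally by $a_-$ and $a_+$ respectively; then tune the dynamics on $A \setminus D$ so that the outer boundary of $K$ (facing $\partial \A$) is a topological circle $C$ on which $f$ acts as a rigid rotation of angle $\rho_0$, while each stable leaf of the horseshoe accumulates on $C$ at a point accessible from $U$. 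Setting $X = K \cup (\A \setminus A)_{\mathrm{low}}$, the standing hypotheses hold, and $\widehat{\rho}(F, X) = \rho_0$ because the action of $\widehat{f}$ on $\P(U)$ is conjugate to the rigid rotation $f|_C$.

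As in Section \ref{sec:horseshoe}, the horseshoe Cantor set $\Lambda \subset K$ is conjugate to the full shift on $\{-,+\}^{\Z}$, and any lift $\widetilde{x}$ of $x \in \Lambda$ with coding $(c_k)_{k \in \Z}$ satisfies
\[
\bigl| (F^n(\widetilde{x}))_1 - (\widetilde{x})_1 - \textstyle\sum_{k=0}^{n-1} a_{c_k} \bigr| \le C_0
\]
for every $n \ge 0$, with the analogous bound using $\sum_{k=n}^{-1}$ for $n \le 0$. Since $a_- - \rho_0 < 0 < a_+ - \rho_0$, a straightforward inductive construction yields a forward sequence $c^+$ consisting of alternating blocks of $+$'s and $-$'s with block lengths growing fast enough that $\sum_{k < n} a_{c^+_k} - n\rho_0$ exceeds $N$ at the end of the $N$-th $+$-block and falls below $-N$ at the end of the $N$-th $-$-block; a symmetric construction yields a backward sequence $c^-$ with the analogous oscillation as $n \to -\infty$.

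Finally, take $q$ to be an accessible point lying on the stable leaf $V_{c^+}$ (the freedom within the leaf parametrizes only the backward coding, so the forward coding $c^+$ is preserved); similarly take $p$ accessible on the unstable leaf $H_{c^-}$. Both points lie in the compact attractor $K$, so their forward (respectively backward) semi-orbits are bounded, and by the displacement bound $(F^n(q))_1 - n\rho_0$ has $\liminf = -\infty$ and $\limsup = +\infty$ as $n \to +\infty$, with the analogous conclusion for $(F^n(p))_1 - n\rho_0$ as $n \to -\infty$. This completes the construction.
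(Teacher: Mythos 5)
Your approach is genuinely different from the paper's. The paper avoids the horseshoe entirely: it takes a fibered homeomorphism $h(\theta,r)=(\theta,\varphi(\theta,r))$ pushing two vertical lines $l(\theta_0), l(\theta_1)$ towards $e^-$, then conjugates by the twist $\phi(\theta,r)=(\theta+r\sin(2\pi r),r)$ so that, in the universal cover, the lifted lines oscillate unboundedly; compactifying the lower end and extending by the identity yields a map fixing $\partial\AA$ pointwise, which forces $\widehat{\rho}(F,X)=0$. The oscillation of the drift comes from the shape of the twist, not from symbolic dynamics.

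Your proposal, however, has a real gap at the step where you "tune the dynamics on $A\setminus D$ so that the outer boundary of $K$ is a topological circle $C$ on which $f$ acts as a rigid rotation of angle $\rho_0$, while each stable leaf accumulates on $C$ at a point accessible from $U$." These two requirements are in tension. If $C$ is an $f$--invariant Jordan curve that is the outer boundary of $K$ facing $\partial\A$, then $C$ separates $U$ from the rest of $K$; in that case $\partial U = C$ is locally connected, $\widehat{U}$ is genuinely a closed annulus with $\P(U)$ conjugate to $f|_C$ (which would give $\widehat{\rho}=\rho_0$), but the \emph{only} accessible points of $X$ are the points of $C$ itself, and any such point has forward rotation number exactly $\rho_0$ because its orbit lives on $C$. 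The Cantor-set leaves $V_a$ lie behind $C$ and are not accessible from $U$, so you cannot choose $q$ on $V_{c^+}$. Conversely, if the leaves do reach $\partial U$ (as in the horseshoe of Section \ref{sec:horseshoe}, where "every vertical line $V_a$ contains at least one accessible point"), then there is no invariant Jordan circle shielding them, $\partial U$ is not locally connected, and the action of $\widehat f$ on $\P(U)$ is \emph{not} conjugate to a rotation on any circle contained in $K$; you have given no argument that its rotation number is $\rho_0$. In fact, controlling $\widehat{\rho}(F,X)$ in a horseshoe attractor is precisely what is hard, and nothing in the sketch addresses it.

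A secondary gap: for the backward-oscillating point $p$ you write "take $p$ accessible on the unstable leaf $H_{c^-}$," but $K=\cap_{n\ge 0}f^n(A)$ contains the full unstable manifolds, and it is not clear which of those points are accessible from $U$, nor that one can freely prescribe the backward coding of an accessible point while keeping it accessible. The symbolic displacement estimate and the block construction of $c^{\pm}$ are fine as far as they go, but they rest on an accessibility and a prime-end computation that the proposal asserts rather than establishes.
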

\begin{proof}
We make the construction for $\widehat{\rho}(F, X) = 0$.
It can be adapted to an arbitrary prime end rotation number without much effort.

Fix $\theta_0 \neq \theta_1 \in S^1$ and consider
a homeomorphism $h\colon \A \to \A$ given by $h(\theta, r) = (\theta, \varphi(\theta, r))$ and such that
$\varphi(\theta_0, r) < r < \varphi(\theta_1, r)$ for every $r < 0$. The forward (resp. backward) semi--orbit of $x_0 = (\theta_0, -1)$
(resp. $x_1 = (\theta_1, -1)$) under $h$ tends to $e^-$, the lower end of $\A$.
We need the motion under $h$ to be slow close to $e^-$, it suffices to ask for $|\varphi(r,\theta) - r| \to 0$ as $r \to -\infty$.

Consider $\phi\colon \A \to \A$ given by the change of coordinate $\theta \mapsto \theta + r \sin(2\pi r)$.
Denote $g = \phi \circ h \circ \phi^{-1}$ and let $l(\theta) = \{(\theta, r): r \le 0\}$ the vertical line in $\A$.
Choose $G\colon \AA \to \AA$ a lift of $g$ that fixes pointwise $\partial \AA$.
The lift of $\phi(l(\theta_i))$, $i=0,1$, to the universal cover $\AA$ is a line
that oscillates arbitrarily in $\widetilde{\theta}$, the lift of the angular coordinate (see Figure \ref{fig:ejemploeses})
and so does the semi--orbit of a lift of $\phi(x_i)$ under $G$.
Thanks to the properties of $\varphi$, if $\iota_r\colon S^1 \times \{r\} \to \A$ denotes the inclusion map,
$||\restr{g}{S^1 \times \{r\}} - \iota_r|| \to 0$ as $r \to -\infty$.

Now, we plug $\psi\colon\A \to S^1 \times (-2, -1]$ using the change of coordinates $r \mapsto 1/(r^2+1)-2$.
The convergence of $\restr{g}{S^1 \times \{r\}}$ guarantees that we can extend the conjugate dynamics $\psi^{-1} \circ g \circ \psi$
to a homeomorphism $f\colon \A \to \A$ which is the identity outside $S^1 \times (-2, -1]$.
The set

\smallskip
\centerline{$X = (\psi \circ \phi)(l(\theta_0) \cup l(\theta_1)) \cup (S^1 \times (-\infty, -2])$}
\smallskip

\noindent is closed and invariant under $f$.

Let $F$ be the lift of $f$ that is equal to $\widetilde{\psi}^{-1}\circ G \circ \widetilde{\psi}$ in $S^1 \times [-2,-1]$,
where $\widetilde{\psi}$ is any lift of $\psi$.
The forward (resp. backward) semi--orbit of a lift $q$ of $\psi \circ \phi (x_0)$
(resp. a lift $p$ of $\psi \circ \phi (x_1)$) under $F$ satisfies
\begin{align*}
\liminf_{n \to +\infty} \,(F^n(q))_1 = -\infty & \enskip \text{and} \enskip
\limsup_{n \to +\infty} \,(F^n(q))_1 = +\infty.\\
\text{(resp. }
\liminf_{n \to -\infty} \,(F^n(p))_1 = -\infty & \enskip \text{and} \enskip
\limsup_{n \to -\infty} \,(F^n(p))_1 = +\infty
\text{ ).}
\end{align*}
Trivially, the semi--orbits $\O^-(p)$ and $\O^+(q)$ under $F$ are bounded.

\begin{figure}[htb]
\begin{center}
\includegraphics[scale = 0.7]{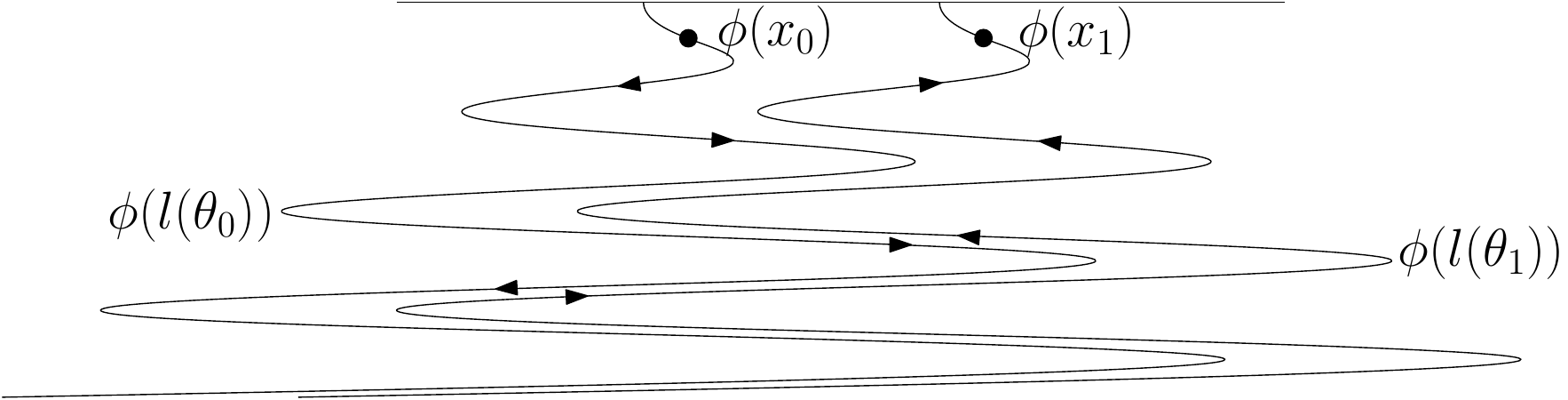}
\end{center}
\caption{Lift of the lines $\phi(l(\theta_0))$ and $\phi(l(\theta_1))$ and the dynamics generated by $G$.}
\label{fig:ejemploeses}
\end{figure}

Since $F$ fixes pointwise $\R \times \{-1\}$, $F$ fixes every cross--cut contained in it. This means that there are prime ends fixed by $\widehat{F}$ in $\L(U)$ and it follows that $\widehat{\rho}(F,X) = 0$.
\end{proof}

\subsection{Invariant measures}

Every Borel probability measure on $\A$ which is invariant under the action of $f$
has an associated rotation number:
$$\rho(F, \mu) = \int_{\A} u \; d\mu,$$
where $u \colon \A \to \R$ is the displacement function defined by $u(x) = (F(\tilde{x}))_1 - (\tilde{x})_1$,
$\tilde{x}$ being any lift of $x$.

Invariant measures can always be found in compact $\omega$--limits of orbits (hence in compact invariant sets),
for example as weak limits of measures of the form $\sum_{k = 0}^n \delta_{f^k(x)}/n$.
Invariant measures can be written as a combination of ergodic invariant measures, that is, measures which assign value 0 or 1
to every invariant set.

\begin{theorem}
Let $\mu$ be a Borel probability ergodic $f$--invariant measure which is supported in a compact subset of $X$.
Suppose that $\rho(F, \mu) \neq \widehat{\rho}(F, X)$. Then the set of accessible points of $X$ has zero $\mu$--measure.
\end{theorem}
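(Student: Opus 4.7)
The plan is to reduce this to Corollary \ref{cor:accessiblemustrotate} by finding a $\mu$-full measure set of points whose full orbit is bounded and has well-defined rotation number equal to $\rho(F,\mu)$. Set $\rho_\mu = \rho(F,\mu)$ and $\rho = \widehat{\rho}(F,X)$; by hypothesis $\rho_\mu \neq \rho$. The displacement function $u$ is $T$-invariant, hence descends to $\A$, and is bounded on the compact set $\supp(\mu) \subset X$, so $\rho_\mu = \int_{\A} u\, d\mu$ is finite.

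By the Birkhoff Ergodic Theorem applied to $u$ and $(f,\mu)$, there is a set $E \subset \supp(\mu)$ with $\mu(E) = 1$ such that
$$\lim_{n \to +\infty} \frac{1}{n}\sum_{k=0}^{n-1} u(f^k(x)) = \rho_\mu \quad \text{for every } x \in E.$$
Since $\mu$ is also $f^{-1}$-invariant and ergodic, applying the same theorem to $(f^{-1},\mu)$ (and shrinking $E$ if necessary while preserving $\mu(E)=1$) provides the analogous backward convergence. Now the sum telescopes: for any lift $\tilde{x}$ of $x$,
$$\sum_{k=0}^{n-1} u(f^k(x)) = (F^n(\tilde{x}))_1 - (\tilde{x})_1,$$
so dividing by $n$ and passing to the limit gives
$$\lim_{n \to \pm\infty} \frac{(F^n(\tilde{x}))_1}{n} = \rho_\mu$$
for every $x \in E$ and every lift $\tilde{x}$. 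Moreover, since $\supp(\mu)$ is compact and $f$-invariant, the full orbit of every $x \in E$ is contained in this compact set, so any lift $\tilde{x}$ has bounded forward and backward semi-orbits.

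Finally, every $x \in E$ satisfies the hypotheses of Corollary \ref{cor:accessiblemustrotate} with well-defined rotation number $\rho_x = \rho_\mu \neq \rho = \widehat{\rho}(F,X)$, which forces $x$ to be a non-accessible point of $X$. Consequently the set of accessible points of $X$ is contained in $X \setminus E$, which has $\mu$-measure zero. I expect no genuine obstacle: the result is essentially immediate from Corollary \ref{cor:accessiblemustrotate} once Birkhoff is invoked in both time directions, the only minor point being to verify the backward limit, which follows from the symmetry of the ergodic hypothesis between $f$ and $f^{-1}$.
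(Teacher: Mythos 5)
Your proof is correct and follows essentially the same route as the paper: apply Birkhoff's ergodic theorem for $f$ and for $f^{-1}$ to the displacement function, telescope to conclude that $\mu$-a.e.\ point has well-defined forward and backward rotation number equal to $\rho(F,\mu)$, and invoke Corollary \ref{cor:accessiblemustrotate}. You are slightly more careful than the paper in explicitly checking that orbits of points in $\supp\mu$ are bounded (needed to apply the Corollary), which is a worthwhile addition.
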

\begin{proof}
Applying Birkhoff's Ergodic Theorem to $F$ and $F^{-1}$,
\begin{align*}
\rho(F, \mu) = \int_{\A} u \; d \mu = \lim_{n \to +\infty} \frac{1}{n} \sum_{k = 0}^{n-1} u \circ F^k(x) \enskip \text{and} \enskip
\rho(F, \mu) = \int_{\A} u \; d \mu = \lim_{n \to +\infty} \frac{1}{n} \sum_{k = 0}^{n-1} u \circ F^{-k}(x)\\
\end{align*}
for $\mu$--almost every $x$.
The limits on the right hand sides of the equations are equal to $\lim_{n \to +\infty} (F^n(x))_1 / n$
and $\lim_{n \to -\infty} (F^n(x))_1 / n$, respectively.
This implies that the full orbit of $\mu$--almost every point in $X$ has a rotation number equal to $\rho(F, \mu)$.
However, recall that by Corollary \ref{cor:accessiblemustrotate}
an accessible point cannot have a well--defined rotation number different from $\widehat{\rho}(F, X)$.
\end{proof}

\section{Unbounded drifts}\label{sec:mainresults}

Assume there are accessible points $p, q \in \X$ such that $\O^-(p)$ and $\O^+(q)$ are bounded
and these semi--orbits drift from the expected one.
In other words,
$\{(F^n(p))_1 - n \rho\}_{n \le 0}$ and $\{(F^n(q))_1 - n \rho\}_{n \ge 0}$ are unbounded for $\rho = \widehat{\rho}(F,X)$.
We already know from Theorem \ref{thm:exampletransversal}
that both sequences can have subsequences that diverge to $\infty$ (in any direction).
Nevertheless, as was proved in Theorem \ref{thm:diffdirections},
the sequences themselves cannot tend to $+\infty$ and to $-\infty$, respectively, or viceversa.
We shall see now that this qualitative obstruction disappears if the sequences diverge towards the same end of $\R$.

\begin{theorem}\label{thm:counterexamplebouncing}
Under our standing hypothesis, given any $\rho_0 \in \R$ there is an example of dynamics such that
 $\widehat{\rho}(F, X) = \rho_0$ and there is an accessible point $p$ in $\X$ with bounded orbit such that
\begin{align*}
\lim_{n \to -\infty} (F^n(p))_1 - n \rho_0 = + \infty \enskip \enskip \text{and} \enskip \enskip
\lim_{n \to +\infty} (F^n(p))_1 - n \rho_0 = + \infty.
\end{align*}
\end{theorem}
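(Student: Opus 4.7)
My plan is to modify the rotational horseshoe of Section~\ref{sec:horseshoe} by using a three-symbol coding $\{-1,0,1\}$, where the image $f(D)$ of the base rectangle meets $D$ in three sub-rectangles corresponding to wrappings of $f(D)$ around $\A$ by $-1$, $0$, and $+1$. The prescribed drift property is then realised by the single bi-infinite code that is constantly $+1$ in the forward tail and constantly $-1$ in the backward tail.

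Assume first that $\rho_0 = 0$. I build an orientation-preserving $f\colon \A \to \A$ supported in a closed essential subannulus $A \subset \A$, together with a rectangle $D \subset A$, so that $f(D)$ is a long strip which first wraps once negatively around $\A$, returns, and then wraps once positively. The intersection $f(D) \cap D$ consists of three horizontal strips $R_{-1}, R_0, R_1$, where $R_i$ is the part of $f(D)$ corresponding to wrapping index $i$. I choose the lift $F$ so that $R_i$ lifts inside $T^i(\tilde D)$ for a fixed lift $\tilde D$ of $D$, and I place $R_{-1}$ as the topmost of the three strips in $D$, so that the upper accessible boundary of the essential annular continuum $K = \bigcap_{n \ge 0} f^n(A)$ consists of horseshoe points with backward coding $\ldots, -1, -1$. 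By designing the picture symmetrically, the accessible prime end associated to the fixed point of $F$ in $R_0$ (coded by $0^{\Z}$) is itself fixed under $\widehat F$, so that $\widehat \rho(F, X) = 0$ for $X = K \cup (S^1 \times (-\infty, r_0])$, where $r_0$ is chosen below $A$. The usual attracting-annulus bookkeeping from Section~\ref{sec:horseshoe} then yields $\A \setminus X \cong \A$.

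Reproducing the computation at the end of Section~\ref{sec:horseshoe}, each point $x$ of the horseshoe Cantor set $\bigcap_{n \in \Z} f^n(D)$ is coded by a bi-infinite sequence $a = (a_n)_{n \in \Z} \in \{-1,0,1\}^{\Z}$ such that, with $\tilde x \in \tilde D$ a lift of $x$, one has $F^n(\tilde x) \in T^{s_n}(\tilde D)$ with
\[
s_n = a_0 + \cdots + a_{n-1} \text{ for } n \ge 1, \qquad s_{-n} = -(a_{-1} + \cdots + a_{-n}) \text{ for } n \ge 1,
\]
and $(F^n(\tilde x))_1 - s_n$ uniformly bounded in $n$. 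Selecting the coding $a_n = 1$ for $n \ge 0$ and $a_n = -1$ for $n \le -1$ gives $s_n = |n|$ for every $n$; by the placement of $R_{-1}$ at the top of $D$, the corresponding point $p$ is accessible from $\A \setminus X$. Its orbit lies inside the compact annulus $A$ and is therefore bounded, and $(F^n(p))_1 = |n| + O(1) \to +\infty$ as $n \to \pm \infty$, giving the required conclusion after subtracting $n\rho_0 = 0$.

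The main obstacle is arranging the horseshoe to simultaneously ensure (i) that $\A \setminus X$ is homeomorphic to $\A$, (ii) that the point $p$ with the prescribed bi-infinite coding is genuinely accessible, and (iii) that $\widehat\rho(F, X)$ equals the prescribed value $\rho_0$. Item (i) is routine once $A$ is chosen so that its complement in $\A$ has two annular components. Item (ii) is handled by placing $R_{-1}$ at the top of $D$ so that accessibility from above selects the correct backward code. Item (iii) is the most delicate: the symmetric design gives $\widehat\rho = 0$, an arbitrary integer value is reached by replacing $F$ by $T^m F$, and any real $\rho_0$ is obtained by grafting the three-symbol horseshoe inside a complementary region of a different base dynamics (for instance a dissipative twist map with a Birkhoff attractor whose prime end rotation number equals $\rho_0$), leaving the drift property of $p$ intact.
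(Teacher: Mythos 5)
The three–symbol horseshoe point you choose, with bi-infinite code constantly $-1$ in the backward tail and constantly $+1$ in the forward tail, has $s_n = |n|$, hence
\[
\lim_{n \to +\infty} \frac{(F^n(p))_1}{n} = +1
\qquad\text{and}\qquad
\lim_{n \to -\infty} \frac{(F^n(p))_1}{n} = -1.
\]
This is a \emph{linear} drift, and it is incompatible with $\widehat{\rho}(F,X) = 0$ by the paper's own Theorem~\ref{thm:dicotomia}: since $p$ is accessible and has bounded orbit, at least one of the two limits above would have to equal $\widehat{\rho}(F,X)$, forcing $\widehat{\rho}(F,X) \in \{-1,+1\}$. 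Equivalently, Theorem~\ref{thm:resumen}(\ref{item:a}) says that if both $\{(F^n(p))_1 - n\rho\}_{n\le 0}$ and $\{(F^n(p))_1 - n\rho\}_{n\ge 0}$ are unbounded, then the backward and forward rotation numbers of $p$ must \emph{both} equal $\rho$, i.e.\ the drift must be $o(|n|)$. Your horseshoe gives a drift that grows like $|n|$, which is precisely the excluded case. (The same tension shows up internally in your proposal: you want the fixed point coded by $0^{\Z}$ to be accessible in order to pin $\widehat{\rho}$ to $0$, but if $\widehat{\rho} = 0$ and the dichotomy is applied to $0^{\Z}$ together with $p$, one of the two alternatives must fail, since their backward rotation numbers are $0$ and $-1$ and their forward rotation numbers are $0$ and $+1$.)

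The idea that is missing, and that the paper's construction is built around, is that the drift $(F^n(p))_1 - n\rho_0 \to +\infty$ in both directions must be \emph{sublinear}. This is arranged by having the orbit of the accessible point converge (in both time directions) to a limit circle on which $f$ acts as the rigid rotation by $\rho_0$. Each step then contributes a rotational excess that tends to $0$ but whose partial sums still diverge (a harmonic–series type accumulation), and after the twist $\theta \mapsto \theta - r$ the divergence of the radial coordinate is transferred into an unbounded angular drift that is $o(|n|)$. A horseshoe, by contrast, gives a fixed integer wrapping per iterate on the constant-symbol tails, hence a drift of order $|n|$, and so cannot serve here no matter how the strips $R_{-1},R_0,R_1$ are ordered. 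The final grafting step in your proposal (to pass from $\rho_0=0$ to arbitrary $\rho_0$) is also left entirely unjustified, but this is secondary to the structural obstruction above.
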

\begin{proof}
Before we give the details let us describe the dynamics that will be constructed.
The invariant set $X$ will consist of several strings which spiral towards a limit circle together with the
subannulus bounded by this circle, as in Figure \ref{fig:primeends}.
The dynamics in the limit circle will be a rotation of angle $\rho_0$ in both cases.
The set of strings and the dynamics induced on it corresponds to the closure of a wandering orbit of a circle homeomorphism with
rotation number $\rho_0 + \Z$ and the restriction of the circle dynamics to it.
A small push in the radial coordinate will force the orbits of points in the strings to spiral towards or outwards the limit circle.
In the universal cover, the dynamics acts in the strings as if they were infinite escalators whose speed decreases near the end and
the direction of movement, up or down, depends on whether the string belongs to the forward or backward orbit of a fixed string.

Take a homotopy of homeomorphisms $\tau_r \colon S^1 \to S^1$, $r \in (-\infty, 0]$, all of which have
rotation number equal to $\rho_0 + \Z$ and converge uniformly as $r \to -\infty$ to the rigid rotation of angle $\rho_0 + \Z$, $\tau_{-\infty}$.
The homotopy also satisfies that there is a continuous family

\smallskip
\centerline{$\{\theta_{r,0} \in S^1: r \in (-\infty, 0]\}$}

\smallskip
\noindent
such that $\{\theta_{r,n} = \tau^n_r(\theta_{r,0}): n \in \Z\}$ is a wandering orbit of $\tau_r$ and
for every $n \in \Z$ the limit of $\theta_{r,n}$ as $r \to -\infty$ exists. Denote the previous limit by $\theta_{-\infty, n}$.
The previous properties imply that $\theta_{-\infty, n} = \tau_{-\infty}^n(\theta_{-\infty, 0})$.
Perhaps it is useful to think of the family $\{\tau_r\}$ as a bifurcation of the rotation $\tau_{-\infty}$.

Consider a homeomorphism $h \colon \A \to \A$ that satisfies $h(\theta, r) = (\tau_{r}(\theta), r)$ if $\theta$ belongs to
$\Lambda_r$, the set of accumulation points of $\{(\theta_{r,n},r): n \in \Z\}$,
and sends the points $(\theta_{r, n}, r)$ to $(\theta_{r', n+1}, r')$, where
$r' = r + \frac{r}{n(r^2+1)}$ if $n \neq 0$ and $r' = r$ if $n = 0$.
Both conditions are compatible because $n \to \infty$ forces
$r'$ to tend to $r$ and $\theta_{r,n}$ to approach $\Lambda_r$. Moreover, we impose that the oscillation of the radial coordinate under $h$
reaches its maximum at $\theta_{r, \pm 1}$ and, in particular, it tends to 0 as $r \to -\infty$.

\medskip
\emph{Claim:} Both the forward and backward orbit of $x_0 = (\theta_{-1,0}, -1)$ under $h$ tend to $e^-$,
the lower end of the annulus.

Denote $r_n$ the radial coordinate of $h^n(x_0)$, $r_0 = -1$. By definition of $h$, $r_{n+1} = r_n + \frac{r_n}{n(r_n^2+1)}$ if $n \neq 0$
so $r_n$ decreases as $n \to \pm\infty$.
Suppose that the forward orbit of $x_0$ does not tend to $e^-$.
Then, the absolute value of its radial coordinate remains bounded by a constant $C \ge 1$,
$-r_n \le C$.
Since $r_n \le -1$, for $n \ge 1$ we have
$$r_{n+1} - r_n = \frac{r_n}{n(r_n^2+1)} \le - \frac{C}{n(C^2+1)} < -\frac{1}{n(C+1)}.$$
 By induction, we deduce that for every $n \ge 1$
$$r_{n+1} - r_1 \le - \sum_{k = 1}^n \frac{1}{k(C+1)}$$
which contradicts the boundedness of $\{r_n\}_{n \ge 0}$.
The proof of the claim for the backward orbit is analogous.
\medskip

We now twist $\A$ using $\phi\colon \A \to \A$ defined by $\theta \mapsto \theta - r$.
The image of every vertical line $l_n = \{(\theta_{r,n}, r): r \le 0\}$ by $\phi$
is wrapped around $\A$ infinitely many times in the positive direction.
The last assertion follows from the fact that for a fixed $n$ the points $\theta_{r, n}$
have a well--defined limit as $r$ goes to $-\infty$ equal to $\theta_{0,n}$ .
Denote $g=\phi\circ h \circ \phi^{-1}$ and let $\widetilde{\tau}_r\colon \R \to \R$ be the lift of $\tau_r$ with rotation number $\rho_0$
and $G$ be the lift of $g$ to the universal cover $\AA$ which coincides with $\widetilde{\tau}_0$ in $\partial \AA = \R \times \{0\}$.
Since any lift $\widetilde{l}_n$ of $\phi(l_n)$ to $\AA$ is tilted with respect to $l_n$
and $g^n(\phi(x_0)) \xrightarrow[n \to \pm \infty]{} e^-$,

\[(G^n(\widetilde{y_0}))_1 - \widetilde{\tau}_{-1}^n(\widetilde{\theta}_{-1,0}) \xrightarrow[n \to \pm\infty]{} +\infty,\]

\noindent where $\widetilde{y_0}$ is a lift of $\phi(x_0)$. See Figure \ref{fig:ejemploboomerang}.
Notice that $|\widetilde{\tau}_{-1}^n(\widetilde{\theta}_{-1,0}) - n \rho_0|$ is bounded by a small constant independent of $n$, so

$$(G^n(\widetilde{y_0}))_1 - n \rho_0 \xrightarrow[n \to \pm\infty]{} +\infty,$$

\begin{figure}[htb]
\begin{center}
\includegraphics[scale = 0.7]{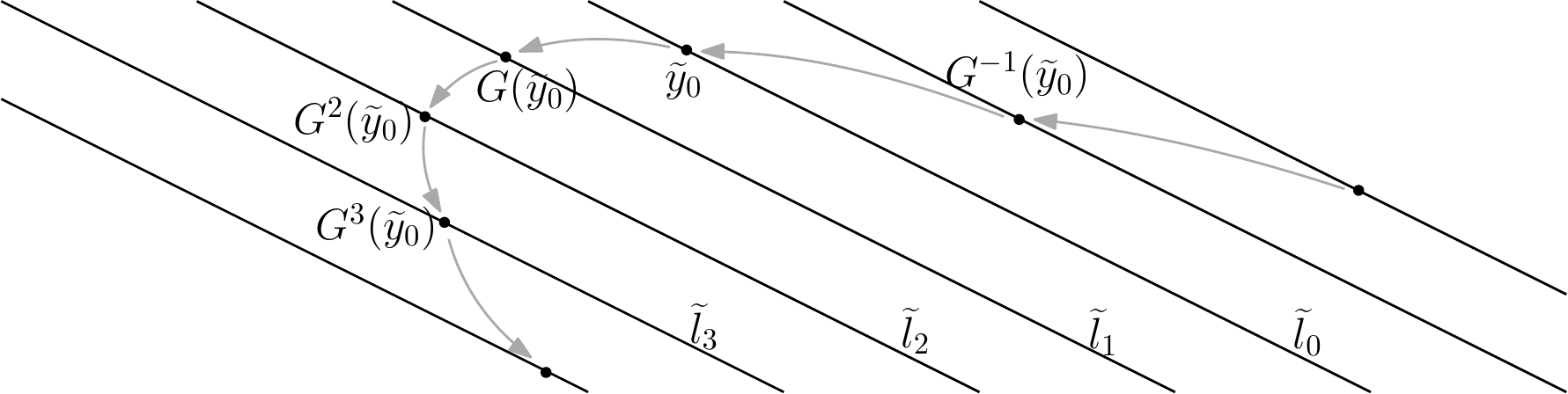}
\end{center}
\caption{Orbit of $\widetilde{y_0}$ under $G$.}
\label{fig:ejemploboomerang}
\end{figure}

In the slices $S^1 \times \{r\}$ of $\A$, $||\restr{g}{S^1 \times \{r\}} - \iota_r \tau_{-\infty} \iota_r^{-1}|| \to 0$ as $r \to -\infty$.
Equivalently,

\smallskip
\centerline{$||g(\theta, r) - (\theta + \rho_0, r)||$ tends to 0 as $r$ goes to $-\infty$ uniformly in $\theta \in S^1$.}
\smallskip

\noindent
In words, this means the action of $g$ in the circles $S^1 \times \{r\}$ becomes the rotation by $\rho_0$ as we approach the lower end of the annulus.
Indeed, if $(\theta', r') = g(\theta, r)$ we have that $|r' - r| \le \frac{r}{r^2+1} \to 0$ as $r \to -\infty$.
The angle satisfies $\theta' = \tau_r(\theta + r) - r'$. From the previous convergence and the fact that $\tau_r \to \tau_{-\infty}$ we deduce
\begin{align*}
\theta' - \theta &= \tau_r(\theta + r) - r' - \theta = (\tau_r(\theta + r) - (\theta + r + \rho_0)) + (r - r') + \rho_0 = \\
 &= (\tau_r(\theta + r) - \tau_{-\infty}(\theta + r)) + (r - r') + \rho_0 \xrightarrow[r \to -\infty]{} \rho_0.\\
\end{align*}

Now, we proceed as in Theorem \ref{thm:exampletransversal} and use $\psi$ to define a homeomorphism in $S^1 \times (-2,1]$.
Extend it by $(\theta, r) \mapsto (\tau_0(\theta), r)$
in $S^1 \times [-1, 0]$ and by $(\theta, r) \mapsto (\tau_{-\infty}(\theta), r)$ in $S^1 \times (-\infty, 2]$
to obtain a map $f$ in $\A$. According to the preceding paragraph, $f$ is a global homeomorphism.
Then, $X = (\psi \circ \phi)(\cup_{n\in \Z} l_n) \cup \{(\theta, r): \theta \in \Lambda_r\} \cup (S^1 \times (-\infty, -2])$
and $p = \psi (\widetilde{y_0})$ satisfy the conditions in the statement.

The dynamics in the set of twisted lines has not been transformed throughout the construction. For every $\widetilde{\theta}$ in the lift $\widetilde{\Lambda_0}$ of $\Lambda_0$ there is a vertical lead line $\gamma_{\widetilde{\theta}}$ that lands at $(\widetilde{\theta}, -1) \in \X$. Denote $\p_{\widetilde{\theta}}$ the prime end determined by $\gamma_{\widetilde{\theta}}$ and $\mathcal{P} = \{\p_{\widetilde{\theta}}: \widetilde{\theta} \in \widetilde{\Lambda_0}\}$. The order $\prec$ in $\mathcal{P}$ corresponds to the natural order of $\widetilde{\Lambda_0}$ as a subset of $\R$. Furthermore, $F(\gamma_{\widetilde{\theta}}) = \gamma_{\widetilde{\tau}_0(\widetilde{\theta})}$ so $\F(\p_{\widetilde{\theta}}) = \p_{\widetilde{\tau}_0(\widetilde{\theta})}$. As a consequence $\widehat{\rho}(F, X) = \rho(\widetilde{\tau}_0) = \rho_0$.
\end{proof}

\begin{remark}
The rational case in the previous theorem can be also addressed with a simpler construction.
We may start with an angular dynamics equal to the rigid rotation of angle $\rho_0+\Z$ in every level $-\infty < r \le 0$ and take
the strings that correspond to a fixed periodic orbit. Then, we can thicken this finite set of strings to a set of bars
which after the twist look like infinite bars placed parallel to the direction $\theta+r = 0$ in $\A$.
Maintaining the boundary of the set of bars invariant (as a set), it is possible to perturb the map so that
the dynamics on the boundaries looks, up to permutation of the bars, like the movement of an infinite chainsaw whose speed decreases to 0
as we approach the lower end of $\A$.
\end{remark}

Theorem \ref{thm:counterexamplebouncing} shows that the qualitative obstruction that appears in the transverse drift case
and is expressed in Theorem \ref{thm:diffdirections} does not generalize to any kind of unbounded drifts.
The previous construction has a special feature: the orbits of the accessible points exhibiting an unbounded drift from the
expected rotational behavior converge to a limit circle whose dynamics is a rigid rotation of angle $\rho_0$.
For example, if the prime end rotation number is 0 this means that any accumulation point of the drifting orbits is fixed.
In particular, the speed of the drift decreases to 0 even though the drift is unbounded.

The goal of this section is to prove that introducing mild conditions on the rate of divergence of
$\{(F^n(q))_1 - n \rho\}_{n \ge 0}$ (for example growing at least linearly on $n$) for a single accessible point $q$ of $\X$
with $\O^+(q)$ bounded implies that all bounded backward orbits of accessible points have rotation number equal to $\rho$.
This result is formally stated in Theorem \ref{thm:main} and together with
Theorem \ref{thm:counterexamplebouncing} show that the speed of convergence in the limits,
in spite of going unnoticed in the transverse drift case, does play a role in general.

To ease the exposition, we suppose from now on that $\limsup_{n \to +\infty} \, (F^n(q))_1 - n \rho = + \infty$,
where $\rho = \widehat{\rho}(F, X)$ and $\O^+(q)$ is bounded.
The case in which $\{(F^n(q))_1 - n \rho\}_{n \ge 0}$ is unbounded from below is basically equal, a few words to address it
are included before the end of the section.

Henceforth, we also assume the following two mild conditions on the drifting forward semi--orbit of $q$ are satisfied:

\smallskip
\noindent
There exist integers $a, b$ such that $b \ge 1$,
\begin{itemize}
\item[(H1)] $\rho \le a/b =: \rho'$ and $\limsup_{n \to +\infty} (F^n(q))_1 - n \rho' = + \infty$ and
\item[(H2)] there are strictly increasing sequences of integers $\{m_i\}_{i=1}^{\infty}, \{n_i\}_{i=1}^{\infty}$ and an integer $k$, $0 \le k < b$, such that
$T^{-m_i - an_i} F^{bn_i + k}(q)$ converges as $i \to +\infty$ to a point $q_0 \in \X$ that is not fixed under $T^{-a} F^b$.
\end{itemize}

\medskip
A trivial remark is that (H1) prevents $q$ from being a periodic point by Theorem \ref{thm:accessibleperiodic}.

\begin{lemma}\label{lem:equalasymptotics}
For any $b \in \N, k \in \Z$ the condition
$$\limsup_{n \to + \infty} (F^{bn+k}(x))_1 - b n \rho = + \infty \enskip\enskip
\text{is equivalent to} \enskip \enskip\limsup_{n \to + \infty} (F^{n}(x))_1 - n \rho = + \infty$$
$$\text{and} \enskip \enskip \limsup_{n \to + \infty} (F^{bn+k}(x))_1/n > \rho \enskip\enskip
\text{is equivalent to} \enskip \enskip\limsup_{n \to + \infty} (F^{n}(x))_1/n > \rho$$
provided $\O^+(x)$ is bounded.
The same equivalence holds if the limits are taken as $n \to -\infty$ and $\O^-(x)$ is bounded.
\end{lemma}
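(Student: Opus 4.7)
The plan is to reduce everything to a single quantitative ingredient: because $\O^+(x)$ is bounded, its projection to $\A$ lies in a compact set, and the displacement function $u$ introduced earlier is continuous and translation--invariant, so there is a constant $M > 0$ such that $|(F^{m+1}(y))_1 - (F^m(y))_1| \le M$ for every point $y = F^j(x)$ along the orbit. Consequently
\[
|(F^{m+j}(x))_1 - (F^m(x))_1| \le M|j|
\]
uniformly in $m \ge 0$ and in bounded $j$. This one estimate lets me freely shift any index by a bounded amount (in particular by values $< b$) without affecting either asymptotic condition being compared.

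For the first equivalence, the $(\Rightarrow)$ direction is automatic, since $\{bn+k\}_{n \ge 0}$ is a subsequence of $\N$ and $(F^{bn+k}(x))_1 - bn\rho = ((F^{bn+k}(x))_1 - (bn+k)\rho) + k\rho$, so a $+\infty$ limsup on the left forces a $+\infty$ limsup on the right. For $(\Leftarrow)$, starting from a sequence $m_j \to +\infty$ with $(F^{m_j}(x))_1 - m_j \rho \to +\infty$, I would pass to a subsequence along which $m_j$ has a fixed residue $k' \in \{0,\ldots,b-1\}$ modulo $b$, and write $m_j = bn_j + k'$. The quantity $(F^{bn_j+k'}(x))_1 - bn_j\rho$ still diverges to $+\infty$; applying the estimate above with shift $j = k - k'$, whose absolute value is $< b$, replaces $k'$ by the prescribed $k$ with an error bounded by $Mb$, preserving the divergence.

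For the second equivalence the argument is parallel. The direction $(\Rightarrow)$ follows because a subsequential rotation rate bounds the full $\limsup$ from below. For $(\Leftarrow)$, pick $m_j \to +\infty$ with $(F^{m_j}(x))_1/m_j > \rho + \varepsilon$ for some $\varepsilon > 0$, extract the subsequence of a fixed residue class $k' \pmod b$, write $m_j = bn_j + k'$, and then shift the residue from $k'$ to $k$ using the same $Mb$ bound; since the error is $O(1)$ while the leading term grows linearly in $n_j$, dividing by $n_j$ (or by $bn_j + k$, according to the intended normalization) the inequality survives in the $\limsup$ with the same strict sign.

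The backward case is identical after replacing $F$ by $F^{-1}$ and using that $\O^-(x)$ bounded gives the analogous uniform bound on $|(F^{-(m+1)}(x))_1 - (F^{-m}(x))_1|$. I do not anticipate a genuine obstacle here: the only technical point is that the $\limsup$ must be preserved under both extraction of an arithmetic subsequence and an $O(1)$ shift of indices, and both operations are justified in a single line once the uniform displacement bound is in hand.
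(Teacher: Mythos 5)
Your proof rests on the same ingredient as the paper's—the uniform displacement bound $M$ available because $\O^+(x)$ is bounded—and is essentially the same argument, just organized as two separate implications with a residue-class extraction instead of the paper's one-shot correspondence $m \mapsto n$ determined by $m \in [bn+k,\, b(n+1)+k)$. One small slip: since the lemma permits any $k \in \Z$ (not just $0 \le k < b$), the shift $|k - k'|$ need not be $< b$; it is still a fixed constant for fixed $k$, so the $O(1)$ error and hence the conclusion are unaffected.
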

\begin{proof}
We only prove the first equivalence in the case $n \to +\infty$, the rest of the cases can be proven in a similar fashion.
For any $m \in \Z$, denote by $n$ the unique integer such that $m \in [bn + k, b(n+1) + k)$. We have that
\begin{align*}
\big| ((F^{bn+k}(x))_1 - bn\rho) - ((F^m(x))_1 - m \rho) \big| & \le \big| (F^{bn+k}(x))_1 - (F^m(x))_1\big| + |(m-bn)\rho| <   \\
 & < b \, \max\{D(y): y \in \mathcal{O}^{+}(x))\} + (b + |k|)\,|\rho| =: C.\\
\end{align*}
Thus, for any subsequence of $\{(F^m(x))_1 - m \rho\}_{m \ge 1}$ we find a subsequence of \[\{(F^{bn+k}(x))_1 - b n \rho\}_{n \ge 1}\]
such that the difference between corresponding terms is bounded by the constant $C$ and the result follows.
\end{proof}

Let us prove that the hypothesis (H1)--(H2) are weaker than assuming that the drift of the forward orbit of $q$ grows at least linearly.

\begin{lemma}\label{lem:linealdriftH1H2}
If $\displaystyle\limsup_{n \to +\infty} \frac{(F^n(q))_1}{n} > \rho$ then (H1) and (H2) hold.
\end{lemma}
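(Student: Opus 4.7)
The plan is to exploit the strict inequality $L := \limsup_n (F^n(q))_1/n > \rho$ to pick a rational $a/b$ strictly between $\rho$ and $L$, derive (H1) at once, and then extract an appropriate subsequence realizing (H2). By density of the rationals and the strict inequality, fix $a, b$ with $b \ge 1$ and $\rho < a/b < L$ (if $L = +\infty$, any rational $>\rho$ will do); set $\rho' = a/b$. Since $L > \rho'$, some subsequence $m_j \to +\infty$ has $(F^{m_j}(q))_1/m_j \to L$, and therefore
$$(F^{m_j}(q))_1 - m_j \rho' \;=\; m_j\!\left(\frac{(F^{m_j}(q))_1}{m_j} - \rho'\right) \longrightarrow +\infty,$$
which is (H1).

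For (H2), by the pigeonhole principle on the residues $m_j \bmod b$ extract a sub-subsequence with $m_j = b n_j + k$ for a single $k \in \{0,\ldots,b-1\}$. Since $m_j\rho' = an_j + (k/b)a$, the linear drift is preserved and in fact
$$\frac{(F^{bn_j+k}(q))_1 - a n_j}{n_j} \longrightarrow bL - a > 0.$$
The set $\pi(\mathcal{O}^+(q))$ being relatively compact, pass to a further subsequence so that $\pi(F^{bn_j+k}(q))$ converges to some $\bar q \in X$. Choose a lift $q_0$ of $\bar q$ in the interior of a fundamental domain and let $l_j \in \Z$ be the unique integer with $T^{-l_j}F^{bn_j+k}(q) \to q_0$; then $l_j = (F^{bn_j+k}(q))_1 + O(1)$, so $m_j := l_j - an_j \to +\infty$, and a further thinning makes both $\{m_j\}, \{n_j\}$ strictly increasing.

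The remaining task is to ensure the candidate $q_0$ is not fixed by $G := T^{-a}F^b$. If it were, then $\pi(q_0)$ would be an $f^b$-fixed point whose $F$-rotation is exactly $a/b$, and the displacement function $\bar u_G(z) := (G(\tilde z))_1 - (\tilde z)_1$ (well-defined on $\A$ since $G$ commutes with $T$) would vanish on the set $\Omega_{\rm fix}$ of all such fixed points in $X$. Telescoping gives $\int \bar u_G\,d\mu_N = ((G^N(q))_1 - (q)_1)/N$ for the empirical measures $\mu_N = N^{-1}\sum_{n<N}\delta_{\pi(G^n(q))}$, and the linear drift forces $\limsup_N \int \bar u_G\,d\mu_N \ge bL - a > 0$. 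A weak-$*$ subsequential limit $\mu_\infty$ is $f^b$-invariant, supported on $\omega_{f^b}(\pi(q))\subset X$, and satisfies $\int \bar u_G\,d\mu_\infty > 0$; hence $\mu_\infty$ must put positive mass on the complement of $\Omega_{\rm fix}$, since $\bar u_G$ vanishes on $\Omega_{\rm fix}$. Any point $y^*$ in this complement has a lift not fixed by $G$, so by re-selecting the subsequence in the previous paragraph to accumulate on $y^*$ (while keeping the linear drift $l_j - an_j \to +\infty$), we obtain the required $q_0$.

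The main obstacle is the very last step of Paragraph 3: the measure-theoretic argument produces a non-$G$-fixed point $y^*$ in the closure of the orbit, but the delicate part is to extract a subsequence which simultaneously converges to (a lift of) $y^*$ and realises $l_j - an_j \to +\infty$. This is precisely the point where the linear (not merely unbounded) growth of $(F^{n}(q))_1 - n\rho$ is essential, as it forces $\mu_\infty$ to charge $\Omega\setminus\Omega_{\rm fix}$ rather than letting the drift be produced only by negligible-density excursions in the orbit.
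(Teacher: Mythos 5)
Your treatment of (H1) is correct and matches the paper's: pick a rational $\rho' = a/b$ with $\rho \le \rho' < \limsup (F^n(q))_1/n$ and multiply by $n$. The problem is entirely in (H2), and it is precisely the gap you flag at the end.

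The invariant-measure argument does show that the $\omega$-limit of $\pi(q)$ under $f^b$ contains a point $y^*$ not fixed by $T^{-a}F^b$. But (H2) requires, in your notation, a subsequence of times $n_j \to \infty$ along which \emph{both} $\pi(G^{n_j}(q))$ converges to a non-fixed point \emph{and} $m_j = l_j - a n_j$ is strictly increasing, which (since $l_j = (G^{n_j}(q))_1 + O(1)$) is equivalent to $(G^{n_j}(q))_1 \to +\infty$ along that subsequence. Your construction gives these two properties along two \emph{different} subsequences: the one realizing the $\limsup$ gives the growth of $(G^{n_j}(q))_1$ but may accumulate only on $G$-fixed points, while the one converging to $y^*$ gives the non-fixedness but carries no control on $(G^{n_j}(q))_1$. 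The phrase ``re-selecting the subsequence\ldots while keeping the linear drift'' is exactly the missing step, and positivity of $\mu_\infty$ off $\Omega_{\rm fix}$ does not by itself supply it: positive visit density to a compact set away from $\Omega_{\rm fix}$ says nothing about the first coordinate $(G^n(q))_1$ at those visit times.

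The paper's proof closes this gap with a direct, measure-free counting argument that produces a single subsequence with all the needed features at once. From $\limsup_n (G^n(q))_1/n > 0$ choose $\delta > 0$ with $(G^n(q))_1 > \delta n$ for infinitely many $n$. Since the increments $(G^{n+1}(q))_1 - (G^n(q))_1$ are uniformly bounded on the bounded orbit, for each such $N$ the last time $m^* < N$ with increment $> \delta/2$ satisfies $(G^{m^*}(q))_1 > \delta N/2 - O(1)$: the big-step times themselves carry $(G^{\cdot}(q))_1 \to +\infty$. Thinning so that $(G^{n_{i+1}}(q))_1 > (G^{n_i}(q))_1 + 1$ yields strictly increasing $m_i$, and passing to a convergent subsequence gives a limit $q_0$ with $(G(q_0))_1 - (q_0)_1 \ge \delta/2 > 0$, hence $q_0$ is not fixed by $G$. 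This is the argument you would need to insert to replace the weak-$*$ detour; as written, your proof identifies the right obstacle but does not overcome it.
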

\begin{proof}
Take $\rho' = a/b \in \Q$ satisfying $\limsup_{n \to +\infty} (F^n(q))_1/n > \rho' \ge \rho$.
Then
$$\limsup_{n \to +\infty} (F^n(q))_1/n - \rho' > 0\enskip\enskip \text{so} \enskip\enskip
\limsup_{n \to +\infty}(F^n(q))_1 - n\rho' = +\infty \enskip\enskip \text{as in (H1)}.$$

Now, we prove (H2).
By the choice of $\rho'$ and Lemma \ref{lem:equalasymptotics}, the condition in the statement implies
$\limsup_{n \to +\infty}(F^{bn}(q))_1/n > b\rho' = a$.
Consequently, denoting $G = T^{-a}F^b$, there exists $\delta > 0$ such that
$(G^n(q))_1 = (F^{bn}(q))_1 - an> \delta n$ holds for infinitely many $n$ and, in particular,
$\limsup_{n \to +\infty} (G^n(q))_1 = +\infty$.
Therefore, we can find an increasing integer sequence $\{n_i\}_{i=1}^{\infty}$
such that
$$(G^{n_i+1}(q))_1 > (G^{n_i}(q))_1 + \delta/2 \;(\star)\enskip \enskip \text{and} \enskip \enskip
(G^{n_{i+1}}(q))_1 > (G^{n_i}(q))_1 + 1 \;(\star\star)$$
for every $i \ge 1$.
After passing to a subsequence, we can suppose by $(\star\star)$ that there is an increasing integer sequence $\{m_i\}_{i=1}^{\infty}$ such that
$T^{-m_i}G^{n_i}(q)$ converges to $q_0 \in \X$. Finally, $(\star)$ implies
$(G(q_0))_1 > (q_0)_1 + \delta/2$ and $q_0$ is not fixed by $G = T^{-a}F^b$.
\end{proof}

Let us briefly discuss the hypothesis (H1)--(H2) in terms of $\rho$.
In the case $\rho$ is irrational, if (H1) is satisfied then $\rho' > \rho$ so the drift is at least linear, that is,
$\limsup_{n \to + \infty} (F^n(q))_1/n > \rho$. According to Lemma \ref{lem:linealdriftH1H2}, (H2) is automatically satisfied as well.
The same conclusion holds as long as (H1) is valid for some $\rho' > \rho$.
In the case $\rho$ is rational and (H1) only holds for $\rho' = \rho$, i.e. the drift is sublinear, hypothesis (H2) discard dynamics
as the ones constructed in Theorem \ref{thm:counterexamplebouncing} with a limit cycle composed exclusively of periodic points.
In fact, if (H1) holds with $\limsup$ replaced by $\lim$ then then (H2) is always satisfied unless every point
of accumulation of the orbit of the projection $q$ to $\A$ is periodic and has rotation number $\rho'$.
This is the content of the following lemma.

\begin{lemma}\label{lem:allperiodic}
Suppose $\lim_{n \to +\infty} (F^n(q))_1 - n \rho' = +\infty$ for some rational $\rho' = a/b \ge \rho$
and the forward orbit of the projection of $q$ to $\A$ under $f$ has an accumulation point that is not
a periodic point of rotation number $a/b$. Then (H1) and (H2) hold.
\end{lemma}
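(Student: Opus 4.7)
The plan is the following. First observe that (H1) is immediate from the hypothesis: the inequality $\rho' \ge \rho$ is given, and a limit equal to $+\infty$ is in particular a $\limsup$ equal to $+\infty$. So I concentrate on (H2). Write $G := T^{-a} F^b$, a lift of $g := f^b$; the task is to construct $q_0 \in \X$ with $G(q_0) \ne q_0$ and strictly increasing integer sequences $\{m_i\}, \{n_i\}$ and $k \in \{0, \ldots, b-1\}$ so that $T^{-m_i - a n_i} F^{b n_i + k}(q) \to q_0$.

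To produce the data, let $y \in \A$ be the accumulation point furnished by the hypothesis and extract $n_j \to +\infty$ with $f^{n_j}(\pi(q)) \to y$. Write $n_j = b N_j + k_j$ with $0 \le k_j < b$ and pass to a subsequence so that $k_j \equiv k$ is constant and $\{N_j\}$ is strictly increasing. Fix any lift $\tilde y \in \X$ of $y$ and, for each $j$, pick $s_j \in \Z$ with $T^{-s_j} F^{b N_j + k}(q) \to \tilde y$ in $\AA$; this is possible since $\pi(F^{bN_j + k}(q)) \to y$. Set $m_j := s_j - a N_j$, so that by construction $T^{-m_j - a N_j} F^{b N_j + k}(q) \to \tilde y$.

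The key step is to show $m_j \to +\infty$, and this is precisely where the divergence of the drift enters. By the choice of $s_j$, one has $s_j = (F^{b N_j + k}(q))_1 - (\tilde y)_1 + o(1)$. The hypothesis applied to $n = bN_j + k$ gives
\[
(F^{b N_j + k}(q))_1 = (bN_j + k)\rho' + \beta_j = a N_j + k \rho' + \beta_j,
\]
with $\beta_j \to +\infty$. Hence $m_j = s_j - a N_j = k \rho' - (\tilde y)_1 + \beta_j + o(1) \to +\infty$, and I may pass to a further subsequence forcing $\{m_j\}$ to be strictly increasing without destroying the strict monotonicity of $\{N_j\}$. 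Setting $q_0 := \tilde y$ and relabelling $n_j := N_j$, the convergence $T^{-m_j - a n_j} F^{b n_j + k}(q) \to q_0$ is exactly the one required by (H2).

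Finally, $G(q_0) \ne q_0$: if $F^b(\tilde y) = T^a(\tilde y)$ held, then $y = \pi(\tilde y)$ would be a periodic point of $f$ of period dividing $b$ with rotation number $a/b$, contradicting the choice of $y$. This verifies (H2). The only non-bookkeeping step in the plan is the identity $m_j \to +\infty$; this is where the assumption that the drift converges to $+\infty$ (rather than merely admitting $+\infty$ as a $\limsup$) is decisive, since an oscillating drift could prevent $m_j$ from tending to infinity and in particular from being extracted as a strictly increasing sequence.
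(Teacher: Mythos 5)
Your argument is correct and mirrors the paper's proof: both take lifts of forward orbit points converging to the accumulation point, pass to a subsequence where the exponents are constant mod $b$, and use the hypothesis that $(F^n(q))_1 - n\rho'$ diverges to $+\infty$ (not merely a $\limsup$) to conclude the shift sequence $m_j$ tends to $+\infty$, so a strictly increasing subsequence can be extracted. Your closing remark correctly identifies the role of the full limit as opposed to a $\limsup$, which is exactly the point the paper is making by weakening (H1)--(H2) relative to a linear drift condition.
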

\begin{proof}
Since (H1) is trivially satisfied, we pass directly to (H2).
Theorem \ref{thm:accessibleperiodic} implies $q$ is not periodic.
Denote $q_0$ a lift of the accumulation point provided by the statement
and denote by $q_i$ lift of points of the forward orbit of the projection of $q$ such that $q_i \to q_0$ as $i \to +\infty$.
By hypothesis $F^b(q_0) \neq T^a(q_0)$.
The points $q_i$ have the form $q_i = T^{-r_i} F^{s_i}(q)$ for some integers $r_i, s_i$ such that $\{s_i\}$ is increasing.
After passing to a subsequence we can suppose $\{s_i\}$ is constant modulo $b$ equal to $k$, $0\le k < b$, that is,
$s_i = b s'_i + k$.
The limit in the statement implies
$$\lim_{i \to +\infty} (F^{bs'_i+k}(q))_1 - as'_i =
 \lim_{i \to +\infty} (F^{bs'_i+k}(q))_1 - (bs'_i+k)\rho' + k\rho' = +\infty.$$
Thus, since the points $q_i$ remain in a compact set, $\{(F^{bs'_i+k}(q))_1 - r_i\}_{i}$ is bounded and
 the sequence $r_i - as'_i \to +\infty$ as $i \to +\infty$.
Again, after passing to a subsequence we can suppose that $r_i - as'_i$ is increasing in $i$.
Then, (H2) is satisfied for the sequences $\{m_i = r_i - as'_i\}, \{n_i = s'_i\}$ and the integer $k$:
$$T^{-m_i-an_i}F^{bn_i+k}(q) = T^{-r_i}F^{bs'_i+k}(q) = q_i \xrightarrow[i\to +\infty]{ } q_0.$$
\end{proof}

We are ready to state the main theorem of this section.

\begin{theorem}\label{thm:main}
Let $f\colon \A \to \A$ be an orientation--preserving homeomorphism, $X$ a non--empty invariant closed set such that $\A \setminus X$
is homeomorphic to $\A$ and $F\colon \AA \to \AA$ and $\X$ lifts of $f$ and $X$, respectively, to the universal cover of $\A$.
Suppose there exists an accessible point $q \in \X$ with bounded forward semi--orbit and such that
$$
\limsup_{n \to +\infty} \, (F^n(q))_1 - n \rho = + \infty,
$$
where $\rho = \widehat{\rho}(F,X)$, and (H1) and (H2) are satisfied.
Then, for any accessible point $p \in \X$ with bounded backward semi--orbit there exists $C > 0$ such that
$$\big|(F^n(p))_1 -(p)_1 - n \rho \big| < C$$
for any $n \le 0$.
\end{theorem}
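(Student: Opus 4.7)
My plan is to argue by contradiction: assume $p$ has bounded backward semi--orbit but $\{(F^n(p))_1 - n\rho\}_{n \le 0}$ is unbounded, and split into two subcases depending on whether this sequence has a subsequence going to $-\infty$ or to $+\infty$. Before doing this, using (H1) I would pass from $F$ to $G := T^{-a} F^b$, which is a lift of $f^b$ whose prime end rotation number is $b\rho - a \le 0$. By Lemma \ref{lem:equalasymptotics}, boundedness of $(F^n(p))_1 - n\rho$ is equivalent to boundedness of $(G^n(p))_1$ up to a constant controlled by the displacement function $D$ on the compact set $\mathcal{O}^-(p)$; and $\limsup_{n\to+\infty}(F^n(q))_1 - n\rho' = +\infty$ becomes $\limsup_{n\to+\infty}(G^n(q))_1 = +\infty$. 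Hypothesis (H2) provides a point $q_0 \in \widetilde{X}$ accumulated by translates $T^{-m_i - a n_i} G^{n_i'}(q)$ of the forward $G$--orbit of $q$ (for some index subsequence $n_i'$) that is not fixed by $G$. Hence it is enough to prove boundedness of $\{(G^n(p))_1\}_{n \le 0}$.

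In the first subcase, a subsequence $n_k \to -\infty$ satisfies $(G^{n_k}(p))_1 \to -\infty$ while $(G^{m_k}(q))_1 \to +\infty$ along some $m_k \to +\infty$; this is a subsequential version of transverse drift. I would adapt the proof of Theorem \ref{thm:diffdirections}: select prime ends $\mathfrak{p}_{n_k}$ and $\mathfrak{q}_{m_k}$ obtained from $\widehat{G}^{n_k}(\mathfrak{p})$ and $\widehat{G}^{m_k}(\mathfrak{q})$ by translating into a fixed fundamental window with $\widehat{T}$, compute their relative winding numbers via Lemma \ref{lem:updownprimeends} at two different pairs of times (using that the principal points are very far apart in both cases), and then transport these winding numbers along the semi--orbits by Proposition \ref{prop:toofar}, thereby reaching a numerical contradiction. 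The argument only inspects finitely many carefully chosen instants, so subsequential drift is enough, and this should yield a statement along the lines of Proposition \ref{prop:notinfront}.

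The second subcase, in which a subsequence $n_k \to -\infty$ satisfies $(G^{n_k}(p))_1 \to +\infty$, is delicate, because Theorem \ref{thm:counterexamplebouncing} shows that such same--direction drift of $\mathcal{O}^-(p)$ and $\mathcal{O}^+(q)$ is realizable without (H2); so (H2) must play an essential role. I would use that $q_0$ is not fixed by $G$: after possibly replacing $G$ by $G^{-1}$, there is $N \ge 1$ and $\varepsilon > 0$ with $(G^N(q_0))_1 > (q_0)_1 + \varepsilon$. Then for every large $i$, the nearby point $T^{-m_i - a n_i} G^{n_i'}(q)$ admits an accessible prime end whose $\widehat{G}^N$--image has principal point shifted at least $\varepsilon/2$ to the right. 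Combining these with lead lines coming from far--right translates of deep backward iterates of $p$, I would set up two disjoint families of lead lines that can be analyzed in two ways: on the one hand, Proposition \ref{prop:toofar} combined with the boundedness of $\mathcal{O}^+(q) \cup \mathcal{O}^-(p)$ forces the relative winding number to be preserved along the very long time interval separating the backward iterate of $p$ from the forward iterate used near $q_0$; on the other hand, the local rightward motion near $q_0$ together with Lemma \ref{lem:updownprimeends} forces a strict change. The contradiction is the analogue of what Proposition \ref{prop:haciaatrasrotanigual} should assert, and I expect the bookkeeping of the prime end positions and of the constant from Proposition \ref{prop:toofar} (via the bound on $D$ on the two compact semi--orbits) to be the main obstacle.

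Once both subcases are excluded, $\{(G^n(p))_1\}_{n \le 0}$ is bounded. To upgrade to the uniform constant $C$ in the statement of the theorem it suffices to observe that, by Lemma \ref{lem:equalasymptotics}, the sequence $\{(F^{n}(p))_1 - n\rho\}_{n \le 0}$ differs from $\{(G^{m}(p))_1\}_{m \le 0}$ along the subsequence $n = bm + k$ by at most $b \cdot \sup\{D(y): y \in \mathcal{O}^-(p)\} + (b+|k|)|\rho|$, which is finite because $\mathcal{O}^-(p)$ is compact modulo $T$; the residues $0 \le k < b$ produce only finitely many additional translations, and anchoring the bound at $n = 0$ gives the explicit constant relative to $(p)_1$.
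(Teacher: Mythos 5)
Your initial reduction to $G = T^{-a}F^b$, $k = 0$ via Lemma \ref{lem:equalasymptotics} matches the paper, and the final upgrade step is fine. But the core of your argument has genuine gaps, and the overall route diverges substantially from the paper, which does not split on the direction of drift of $\mathcal{O}^-(p)$ at all.

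The central problem is in your first subcase. You claim that the transverse--drift argument of Theorem \ref{thm:diffdirections} ``only inspects finitely many carefully chosen instants,'' so subsequential drift should suffice. This is not correct: that proof transports the relative winding number from time $0$ to time $k_1 - k_0$ by applying Proposition \ref{prop:toofar} at \emph{every} intermediate step, and Proposition \ref{prop:toofar} needs the separation $(F^i(q_{k_0}))_1 - (F^i(p_{n_1}))_1 > 2C$ to hold for \emph{all} $i$ in the interval. Securing this uses $(q_k)_1 > C+1$ for every $k$ in the contiguous block $[k_0, k_1]$ and $(p_n)_1 < -C-1$ for every $n$ in a matching contiguous block, which in Theorem \ref{thm:diffdirections} is guaranteed precisely because the drifts are genuine limits, not limsup/liminf. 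With only subsequential divergence, intermediate iterates can come back arbitrarily close to each other, the separation condition fails, and the winding-number transport breaks. Theorem \ref{thm:exampletransversal} is exactly the construction that illustrates how much freedom subsequential oscillation leaves; you would need to explain how (H1)--(H2) rescue the transport, and you do not.

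The second subcase is also not viable as described. You want a small rightward push of order $\varepsilon$ near $q_0$ to ``force a strict change'' of relative winding number via Lemma \ref{lem:updownprimeends}. But that lemma produces $[w] = \pm 1$ only when the landing points are separated by more than the full angular extent of the competing lead lines (conditions $(q)_1 < \min\p$ or $\max\q < (p)_1$), which a bounded local displacement does not create. The paper instead encodes the role of $q_0$ entirely differently: it chooses a free topological disk $B$ around $q_0$, introduces the sets $\acc_B$ and $\acc$ of accessible prime ends whose access is controlled by $B$, proves in Proposition \ref{prop:notinfront} that $\acc_B \cap \widehat{F}^{-1}(\L(U)\setminus\acc)$ is bounded from below (a statement about $\mathcal{O}^+(q)$ alone, using (H1)--(H2) and the freeness of $B$), and then in Proposition \ref{prop:haciaatrasrotanigual} derives the bounded deviation of every bounded backward semi--orbit from the forward invariance of $\acc^{\c}$, with no reference to whether $\mathcal{O}^-(p)$ drifts and no winding-number computation. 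That mechanism --- forward invariance of a class of prime ends trapped behind $B$ --- is the ingredient your proposal is missing; without it, there is no way to close either subcase.
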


\begin{proof}
First, we show that it is enough to prove the result in the case $\rho' = 0$ (from (H1)) and $k = 0$ (from (H2)).
Indeed, take $a, b, k$ satisfying (H1)--(H2) and write $\rho' = a/b$ and $q' = F^k(q)$.
According to Lemma \ref{lem:equalasymptotics}
$$\limsup_{n \to +\infty} (F^n(q))_1 - n \rho' = + \infty \enskip \text{is equivalent to} \enskip \limsup_{n \to +\infty} (G^n(q))_1 = + \infty,$$
where $G = T^{-a}F^b$, which is in turn equivalent to
$\limsup_{n \to +\infty} (G^n(q'))_1 = + \infty$.
Property (H2) supplies $q_0$, the limit of $T^{-m_i-an_i}F^{bn_i+k}(q) = T^{-m_i}G^{n_i}(q')$, that is not fixed by $G$.
For any point $p$,
\[
\{(G^n(p))_1 - (p)_1 - n (\rho - \rho')\}_{n \le 0} \enskip \text{is unbounded if and only if so is} \enskip
\{(F^n(p))_1 - (p)_1 - n \rho\}_{n \le 0},
\]
again by Lemma \ref{lem:equalasymptotics}.
Evidently, $\X$ is invariant under $G$, $G(\X) = T^{-a}F^b(\X) = T^{-a}(\X) = \X$
and the semi--orbit of any point under $G$ is bounded if and only if the semi--orbit under $F$ is also bounded.
In sum, after replacing $F$ by $G = T^{-a}F^b$ and $q$ by $q'$ we can assume $\rho' = k = 0$.
Since $\widehat{\rho}(G, X) = b \widehat{\rho}(F, X) - a \le 0$, we henceforth assume $\rho \le 0$ as well.

After the reduction discussed in the previous paragraph,
the set of hypothesis on the dynamics and the forward orbit of $q$ that will be used in the proof are that
\begin{itemize}
\item $\rho \le 0$,
\item (H1): $\limsup_{n \to + \infty} (F^n(q))_1 = + \infty$ and
\item (H2): there exists increasing sequences of integers $\{m_i\}_{i=1}^{\infty}, \{n_i\}_{i=1}^{\infty}$ such that\\
$T^{-m_i}F^{n_i}(q) \xrightarrow[i \to +\infty]{} q_0 \in \X$ and $q_0$ is not fixed by $F$.
\end{itemize}

Since $q_0$ is not fixed by $F$,
let us take a closed topological disk $B$ in $\AA$ that contains $q_0$ in its interior and is free, that is, $F(B) \cap B = \emptyset$.
Assume further that $B$ does not touch the boundary of $\AA$. $B$ will play a major role in the proof.
Without loss of generality, we suppose that the points $T^{-m_i}F^{n_i}(q)$ belong to $B$.
Note that all these points are different. Indeed, otherwise $q$ would be periodic and (H1) would contradict Theorem \ref{thm:accessibleperiodic}.

Let us give a brief yet inaccurate sketch of the proof of Theorem \ref{thm:main} to shed some light in the forthcoming propositions.
Denote $\q$ an accessible prime end whose principal point is $q$ and define $\q_i = \T^{-m_i}\F^{n_i}(\q)$,
where $m_i, n_i$ come from (H2).
Since $\rho \le 0$ and $\{m_i\}, \{n_i\}$ are increasing, the sequence of accessible prime ends $\{\q_i\}$
is strictly decreasing and tends to $-\infty$ in $\L(U)$.
Notice, however, that their principal points $q_i = T^{-m_i}F^{n_i}(q)$ lie in $B$.
According to Lemma \ref{lem:goodleadlinessequence}, there exist lead lines $\gamma_i$ that determine $\q_i$ and are pairwise disjoint.

The right question to raise now concerns the location of the image of $B$, $F(B)$. We only know that it does not meet $B$.
Then, there are basically two possibilities for $F(B)$. The first one states that $F(B)$ is ``in front of'' $B$ in the sense that
an infinite number of lead lines $\gamma_i$ encounter $F(B)$ before $B$.
Intuitively, this means that $B$ is deeper than $F(B)$ as we head for the corners and cavities of $\widetilde{U} = \AA \setminus \X$.
However, this situation ultimately implies that $\{(F^n(q))_1\}_{n \ge 0}$ is bounded from above and this contradicts our hypothesis,
see Proposition \ref{prop:notinfront}. The second possibility
says roughly that $F(B)$ is placed ``behind'' $B$ or deeper than $B$.
More precisely, the lead lines that land at $F(B)$ first go through $B$.
The conclusion in this case, see Proposition \ref{prop:haciaatrasrotanigual},
is that the deviation of every bounded backward orbit from the expected rotation is bounded by a constant.
In particular, all backward rotation numbers are equal to $\rho$.

The disk $B$ defined above is central in the proof and leads to the following definitions.
The subset of accessible prime ends of $\widetilde{U}$ whose principal point is contained in $B$
is denoted $\acc_B$. The choice of $B$ implies that $\acc_B$ is unbounded from below.
The set of accessible prime ends $\p$ such that every lead line that determines $\p$ meets $B$ is
denoted $\acc$. Trivially, $\acc_B \subset \acc$.

One of the technical issues we have to deal with throughout this proof concerns the relative position
of the lead lines $\gamma_i$ determining $\q_i$ and $B$. Since our argument does not use the fact that
the points $q_i$ converge to $q_0$, we will replace the prime ends $\q_i$ by another sequence of prime ends
contained in $\acc_B$ that also diverges to $-\infty$ in $\L(U)$ and for which we can find lead lines suitably placed.
In particular, these lead lines will be pairwise disjoint and their intersection with $B$ will be closed subarcs containing
the landing points.
The precise requirements are stated in the following lemma.

\begin{lemma}\label{lem:goodleadlines}
Given a neighborhood $W$ of $B$, $\p \in \acc$ and a lead line $\gamma$ that determines $\p$,
there exists $\p' \in \acc_B$ with $\p' \preceq \p$ and a lead line $\gamma'$ that determines $\p'$ such that
\begin{itemize}
\item $\gamma' \subset \gamma \cup W$ and
\item $\gamma' \cap B = \gamma' \cap \partial B$ is a singleton or a final closed subarc of $\gamma'$. 
\end{itemize}
\end{lemma}
\begin{proof}
Let us give a first guess on the construction of the lead line $\gamma'$. We can follow $\gamma$ until it first meets $B$ and then continue along an arc of $\partial B$ until we reach $\X$. This naive method yields a lead line $\gamma'$ but does not guarantee that $\gamma'$ determines a prime end $\p'$ smaller or equal than $\p$ as Figure \ref{fig:lemat} illustrates.

Denote $\mathcal{C}$ the set of crosscuts of $\widetilde{U}$ determined by $\partial B$.
Trivially, $\partial B \cap \X$ is not empty because $B$ contains points of $\X$ and $\X$ has no compact connected component.
In the case $\partial B \cap \X$ is a single point, $\mathcal{C}$ is empty (recall that the endpoints of a crosscut must be different)
but it will be clear from the ensuing arguments how to construct $\gamma$.
Therefore, let us assume from now on that $\mathcal{C}$ is a non--empty set.
Since the elements of $\mathcal{C}$ are non--trivial
closed arcs in $\partial B$ whose interiors are pairwise disjoint, there are at most countably many crosscuts in $\mathcal{C}$.
Note also that if $\{c_n\}_n$ is an infinite sequence of different elements of $\mathcal{C}$, then any of its limit points,
i.e. points $x$ that can be written as $x = \lim_i x_{n_i}$ with $x_{n_i} \in c_{n_i}$ and $n_i \to +\infty$, belongs to $\X$.

{\centering
\begin{figure}[htb]
\begin{tabular}{l c r}
\includegraphics[scale = 0.5]{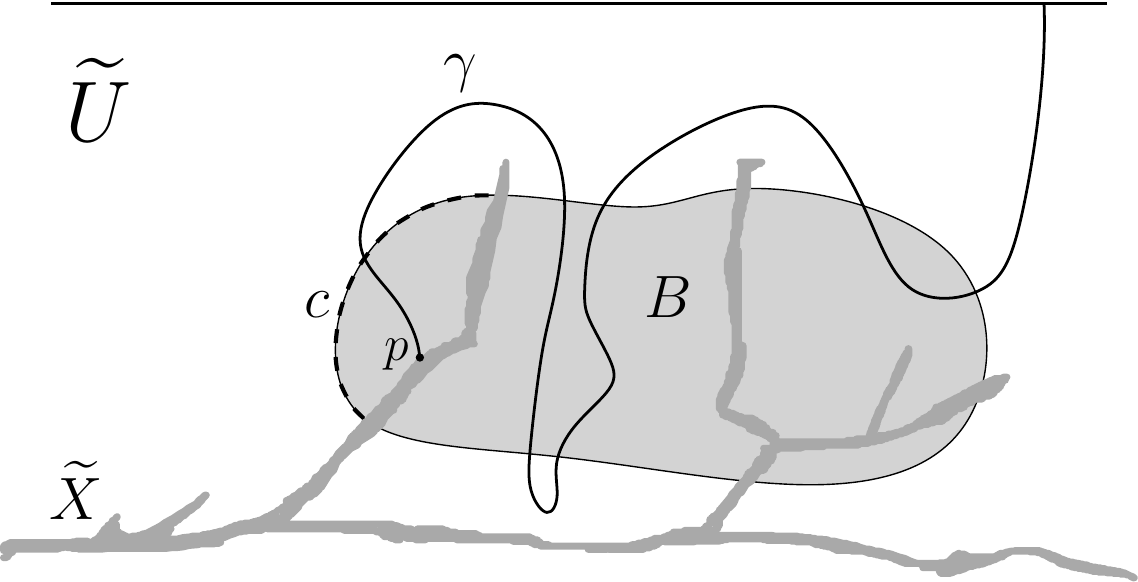} & \hspace{5mm} & \includegraphics[scale = 0.5]{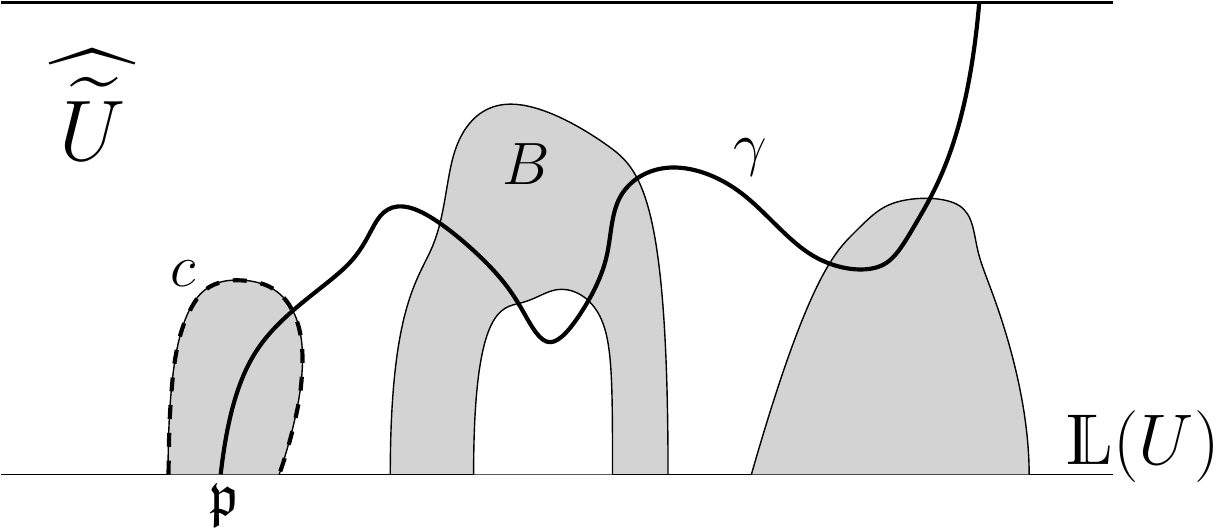}
\end{tabular}
\caption{In this example, before traversing the crosscut $c$ (marked in dashed line), $\gamma$ intersects (twice) three other crosscuts contained in $\partial B$. Those three crosscuts determine prime ends in $\L(U)$ that are greater than $\p$ and two of them are maximal.}
\label{fig:lemat}
\end{figure}
}

There is a natural order relation in $\mathcal{C}$: $c_i$ is larger than $c_j$ iff $V(c_j) \subset V(c_i)$ (tildes are dropped from the original notation of Subsection \ref{subsec:lineend}).
Notice that for any pair of crosscuts in $\mathcal{C}$ there are three possibilities: $V(c_i) \subset V(c_j), V(c_j) \subset V(c_i)$
and $V(c_i) \cap V(c_j) = \emptyset$.
Denote $\mathcal{C}^*$ the subset of $\mathcal{C}$ composed of the crosscuts which are maximal with respect to the previous order.
As a consequence, if $c_i, c_j$ are different crosscuts in $\mathcal{C}^*$ then $V(c_i) \cap V(c_j) = \emptyset$.
The subset of $\mathcal{C}^*$ composed of the maximal crosscuts $c$ such that $\p$ is not adherent to $V(c)$
in $\widehat{\phantom{\,}\widetilde{U}\phantom{\,}}$ is denoted $\mathcal{C}_0^*$.
Since every crosscut of $\mathcal{C}$ is a non--trivial closed sub--arc of $\partial B$,
the set $\mathcal{C}_0^* = \{c_j^0\}_{j \in J}$ is countable.
For every $j \in J$, $\p$ is not adherent to $V(c_j)$ so $\gamma \cap c_j$ is a compact subset of the open arc $\interior(c_j)$.
Take a closed interval $\alpha_j$ in the
interior of $c_j$ that contains $\gamma \cap c_j$. Since no point of $\alpha_j$ is accumulated by the rest of the crosscuts
in $\mathcal{C}_0^*$ and $V(c_j)$ is homeomorphic to an open disk, we can find pairwise disjoint closed topological disks $D_j$, $j \in J$,
that are also disjoint to the crosscuts in $\mathcal{C}^* \setminus \mathcal{C}_0^*$, and contain in their interior
both $\alpha_j$ and $\gamma \cap V(c_j)$. We further suppose that $D_j \cap \X = \emptyset$, $D_j \subset V(c_j) \cup W$ and the diameter of $D_j$
tends to 0 in the case $J$ is infinite.
Now, we can push $\gamma$ outside every $V(c_j)$ using homeomorphisms $h_j\colon D_j \to D_j$ such that $(h_j)_{|\partial D_j} = \id$ and
$h_j(\gamma \cap D_j) \cap  V(c_j) = \emptyset$. This implies $h_j(\gamma \cap D_j) \subset W \setminus B$.
Construct a homeomorphism $h$ supported in $\cup_j D_j$ by $h_{|D_j} = h_j$ and the identity in the rest.
The effect of $h$ is to push $\gamma$ outside any of the unsuitable crosscuts it meets: there is no crosscut $c \in \mathcal{C}$
such that $\p$ is not adherent to $V(c)$ and $h(\gamma)$ intersects $c$, i.e., $h(\gamma)$ does not meet any
element of $\mathcal{C}_0^*$.
It is easy to see that the lead line $h(\gamma)$ still determines $\p$.
Write $\gamma_0 = h(\gamma)$.

Let $t_0$ be the smallest parameter at which $\gamma_0$ meets $B$, $t_0 = \min\{t \in [0,1]: \gamma_0(t) \in B\}$.
There are now two possibilities for the lead line $\gamma_0$: either $t_0 < 1$ or $t_0 = 1$.
The latter alternative implies that $\gamma_0$ is disjoint to $B$ except for its landing point so the statement is already satisfied if we take $\p' = \p$
and $\gamma' = \gamma_0$.
Otherwise, $\gamma_0(t_0)$ does not belong to $\X$ so it lies in a crosscut in $\partial B$, say $c$.
The crosscut $c$ has to be maximal in the sense explained above and does not belong to $\mathcal{C}_0^*$.
Therefore, $c$ determines two different prime ends and
one of them is not greater than $\p$, say $\p'$. Let $\gamma'$ be the concatenation of the arc $\gamma_0[0,t_0]$ and the arc in $c$ that
starts at $\gamma_0(t_0)$ and determines $\p'$. Then, $\gamma'$ is a lead line that determines $\p'$ and
satisfies the statement.
\end{proof}

\begin{lemma}\label{lem:goodleadlinessequence}
Let $\{\e_i\}_{i = 0}^{\infty}$ be a sequence of accessible prime ends contained in $\acc_B$ that diverges to $-\infty$ in $\L(U)$,
and a set of lead lines $\gamma_i$ that determine $\e_i$.
Then, given any neighborhood $W$ of $B$ there exists a sequence of accessible prime ends $\{\e'_k\}_{k = 0}^{\infty}$
and lead lines $\gamma'_k$ satisfying:
\begin{enumerate}
\item The sequence $\{\e'_k\}$ is contained in $\acc_B$, it is strictly decreasing and tends to $-\infty$ in $\L(U)$.
\item $\gamma'_k$ determines $\e'_k$, $\gamma'_k \subset \gamma_k \cup W$ and $\gamma'_k \cap B = \gamma'_k \cap \partial B$ is either a single point or a final closed subarc of $\gamma'_k$.
\item $\{\gamma'_k\}$ are pairwise disjoint except for the possible coincidence of landing points and $\gamma'_k(0)$ tends to $-\infty$ in $\partial \AA$.
\end{enumerate}
\end{lemma}
\begin{proof}
The proof goes by induction, applying Lemma \ref{lem:goodleadlines}
at every step and doing the necessary modifications to fulfill (iii).
Firstly, apply Lemma \ref{lem:goodleadlines} to $\e_0, \gamma_0$ and $W$ to obtain $\e'_0, \gamma'_0$ satisfying (ii).
Now, we prove the inductive step. Assume we have constructed $\e'_0, \ldots, \e'_k$ and $\gamma'_0, \ldots, \gamma'_k$ satisfying
the statement. Take $\e_i \prec \e'_k$ and apply Lemma \ref{lem:goodleadlines} to $\e_i, \gamma_i$ and $W$
to obtain $\e'_{k+1}$ and $\gamma'$. We have that $\e'_{k+1} \prec \e_i \prec \e'_k$ and that $\gamma'$ satisfies (ii).
However, we still have to do some modification to address (iii).


If $\gamma'$ is disjoint to $\cup_{j = 0}^k \gamma'_j$, write $\gamma'_{k+1} = \gamma'$ and proceed directly to the final paragraph of the proof.
Otherwise, let $t'$ be the maximum value among all $0 \le t < 1$ for which $\gamma'(t) \in \cup_{j=0}^k \gamma'_j$.
Since $\e'_{k+1} \prec \e'_k$ the maximum is well--defined and $t' < 1$.
Lemma \ref{lem:primeends2} guarantees $\gamma'(t')$ belongs to $\gamma'_k$, $\gamma'(t') = \gamma'_k(t_k)$.

Consider a small neighborhood $V$ of $\gamma_k[0,t_k]$ disjoint to $B$ and $\X$ and $\epsilon > 0$ such that
$\gamma'(t'+\epsilon) \in V$. Then, we can replace $\gamma'[0,t' + \epsilon]$ by an arc
that goes parallel to $\gamma'_k$ within $V$ without ever touching it and ending at $\gamma'(t' + \epsilon)$.
This new lead line $\gamma'_{k+1}$ also determines $\e'_{k+1}$ and is disjoint to $\gamma'_k$
and so to every $\gamma'_j$, $j = 0,\ldots,k$.

Finally, modify $\gamma'_{k+1}$ by pulling it left in a neighborhood of $\partial \AA$ that is disjoint to $B$
so that $\gamma'_{k+1}(0) < \gamma'_k(0) - 1$ in $\partial \AA$ and no intersections are created.
This last adjustment guarantees that $\gamma'_{k}(0) \xrightarrow[k \to +\infty]{ } -\infty$.
\end{proof}


\begin{lemma}\label{lem:boundeddomain}
Let $\gamma_0, \gamma_1$ be disjoint lead lines that determine the prime ends $\e_0, \e_1$, respectively, with $\e_0 \prec \e_1$. Suppose $\alpha\colon [0,1] \to \AA$ is an arc that joins the principal point of $\e_0$ with the principal point of $\e_1$ and $\alpha \cap \partial \AA = \emptyset$. Assume further that there are $0 \le t_0 < t_1 \le 1$ such that $\alpha[0,t_0]$ coincides with a final part of $\gamma_0$, $\alpha[t_1, 1]$ is the final piece of $\gamma_1$ and $\alpha(t_0,t_1)$ does not touch $\gamma_0 \cup \gamma_1$. Denote $D$ the domain enclosed by $\gamma_0, \alpha(t_0,t_1), \gamma_1$ and $\partial \AA$. For every accessible $\e \in [\e_0, \e_1]$ there are two possibilities:
\begin{itemize}
\item either there is a lead line $\gamma$ contained in $\overline{D}$ that determines $\e$
\item or every lead line that determines $\e$ intersects $\alpha$.
\end{itemize}
As a consequence, there exists $C > 0$ such that if $e$ is the landing point of $\e$, $\e \in [\e_0, \e_1]$ and $|(e)_1| > C$ then
every lead line that determines $\e$ intersects $\alpha$.
\end{lemma}
\begin{proof}
Suppose $\gamma'$ is a lead line that determines $\e$ and $\gamma'\cap\alpha=\emptyset$.
This implies that $\e \neq \e_0, \e_1$ because the landing points of $\e_0, \e_1$ belong to $\alpha$.
If $\gamma'$ is contained in $\overline{D}$ set $\gamma = \gamma'$ and skip the rest of this paragraph.
Otherwise, $\gamma'$ meets $\gamma_0$ or $\gamma_1$. Define $t' = \max\{t: \gamma'(t) \in \gamma_0 \cup \gamma_1\}$ and assume for instance that $\gamma'(t') = \gamma_0(t_0)$. Note that $t', t_0 < 1$ because the landing point of $\gamma_0$ belongs to $\alpha$.
Replace $\gamma'[0,t']$ by the arc $\gamma_0[0,t_0]$ and call the resulting lead line $\gamma$.
Assume $\gamma$ is parameterized in a way that $\gamma(t_0) = \gamma_0(t_0)$.
Clearly, $\gamma$ also determines $\e$ and is disjoint to $\alpha$.

The construction of $\gamma$ ensures that if $\gamma$ meets $\interior(D)$ then $\gamma \subset \overline{D}$.
In the prime end closure of $\widetilde{U}$, $\gamma_0$ determines two closed regions
and $\gamma$ is contained in the same region as $\gamma_1$ because of the inequality $\e_0 \prec \e \prec \e_1$.
Since $\gamma \cap \gamma_1 = \emptyset$, we conclude that $\gamma_0(0) \le \gamma(0) < \gamma_1(0)$.
In the case $\gamma(0) \in (\gamma_0(0), \gamma_1(0))$ it is clear that $\gamma(\epsilon) \in \interior(D)$ for small $\epsilon > 0$ and
the conclusion follows.
Otherwise, let us show that $\gamma(t_0 + \epsilon) \in \interior(D)$ if $\epsilon > 0$ is small.
As in the previous lemma, in a small neighborhood of $\gamma_0[0,t_0]$
construct a hanging arc $\gamma_{\epsilon}$ that goes parallel to $\gamma$ without ever touching $\gamma_0 \cup \X \cup \alpha \cup \gamma_1$
and lands at $\gamma(t_0 + \epsilon)$.
Since $\gamma_{\epsilon}$ can be extended to a lead line that determines $\e$ and is disjoint to $\gamma_0$,
we see that $\gamma_{\epsilon}(0) \in (\gamma_0(0), \gamma_1(0))$
and this yields that $\gamma_{\epsilon}(0,1] \subset \interior(D)$. In particular, $\gamma_{\epsilon}(1) = \gamma(t_0 + \epsilon)$ lies in $\interior(D)$ and, as a consequence, $\gamma \subset \overline{D}$.

The last assertion in the statement follows from the compactness of $\overline{D}$ in $\AA$.
\end{proof}

\begin{remark}\label{rmk:boundeddomain}
Lemma \ref{lem:boundeddomain} can be easily rephrased to address the case in which the prime ends $\e_0, \e_1$ share a common principal point $p$. Let us state it for later use. Let $\gamma_0, \gamma_1$ lead lines that determine $\e_0, \e_1$, respectively, and are disjoint except for their common landing point. Denote $D$ the domain enclosed by $\gamma_0, \gamma_1$ and $\partial \AA$. Then, for every $\e \in [\e_0, \e_1]$ there is a lead line $\gamma$ contained in $\overline{D}$ that determines $\e$. In particular, there is a constant $C > 0$ such that if $e$ is the principal point of $\e \in [\e_0, \e_1]$ then $|(e)_1| \le C$.

The proof of this statement only requires a small modification of the previous argument.
\end{remark}

The previous lemmata supply all the technical details necessary in the proof of Theorem \ref{thm:main}.
Recall that the proof is basically divided in two propositions dealing with the relative position of $B$ and $F(B)$.
Intuitively, the first proposition states that $F(B)$ lies deeper than $B$ in the sense
that there do not exist a sequence of accessible prime ends having their principal points in $B$, diverging to $-\infty$ and whose images
under $\F$ can be accessed without going through $B$.

\begin{proposition}\label{prop:notinfront}
$\acc_B \cap \widehat{F}^{-1}(\L(U) \setminus \acc)$ is bounded from below.
\end{proposition}
\begin{proof}
Let us proceed by contradiction.
Assume there is a sequence of accessible prime ends $\{\e_k\}_{k = 0}^{\infty}$, in $\acc_B$ converging to $-\infty$ such that
$\widehat{F}(\e_k) \notin \acc$. Then, we can find lead lines $\gamma_k$ that determine $\e_k$ and $F(\gamma_k) \cap B = \emptyset$.
Take a neighborhood $W$ of $B$ so that $F(W) \cap B = \emptyset$.
Apply Lemma \ref{lem:goodleadlinessequence} to $\{\e_k\}, \{\gamma_k\}$ and $W$ to obtain a new sequence of accessible prime ends $\{\e'_k\}$ and pairwise disjoint lead lines $\{\gamma'_k\}$. We deduce that $F(\gamma'_k) \cap B  \subset (F(\gamma_k) \cup F(W)) \cap B = \emptyset$.
Henceforth, denote $\gamma''_k = F(\gamma'_k)$.

The lead lines $\gamma''_k$ split $\AA \setminus F(B)$ in many connected components.
There are countably many of them that meet $\partial \AA$,
say $\{U_k\}_{k = 0}^{\infty}$. They can be described as follows.
Each component $U_k, k \ge 1,$ is enclosed by $\gamma_{k-1}$, $\gamma_k$, $\partial F(B)$ and $\partial \AA$,
hence it is relatively compact in $\AA$. Recall from Lemma \ref{lem:goodleadlinessequence} that $\gamma_k(0) \to -\infty$,
so the complement of $\cup_{k = 1}^{\infty}\overline{U_k} \cap \partial \AA$ in $\partial \AA$ is a half--line
converging to the $+\infty$ end that is contained in $U_0$.

\medskip

\noindent
\emph{Claim:} $(\L(U) \setminus \acc) \cap \widehat{F}^{-1}(\acc)$ is bounded from below.
\smallskip

\noindent
Suppose we have an accessible prime end such that $\e \in \L(U) \setminus \acc$, $\F(\e) \in \acc$ and $\F(\e) \prec \F(\e'_0)$. Since $\gamma''_k \cap B = \emptyset$, none of $\F(\e'_k)$ belongs to $\acc$. By Lemma \ref{lem:goodleadlinessequence}, $\e'_k \to -\infty$ so $\F(\e)$ belongs to $(\F(\e'_{i+1}), \F(\e'_i))$ for some $i \ge 0$. Let us now prove that this implies that $B \subset U_i$. Indeed, take a lead line $\gamma$ that determines $\e$ such that $\gamma \cap B = \emptyset$, so $F(\gamma) \cap F(B) = \emptyset$. Notice that the arcs $\gamma''_{i+1}, \gamma''_i$ only meet $F(B)$ in a final subarc or in their landing points. Define $\gamma'$ as $F(\gamma)$ if it is disjoint to $\gamma''_{i+1} \cup \gamma''_i$ or as a concatenation of an initial subarc of $\gamma''_{i+1}$ or $\gamma''_i$ and a final subarc of $F(\gamma)$ disjoint to $\gamma''_{i+1} \cup \gamma''_i$ (the choice depends on which of $\gamma''_{i+1}, \gamma''_i$ the arc $F(\gamma)$ meets for the last time). Clearly, $\gamma'$ is a lead line that defines $\F(\e)$ thus it meets $B$. However $\gamma'(0) \in [\gamma''_{i+1}(0) ,\gamma''_i(0)]$ and $\gamma' \cap F(B) = \emptyset$. We then conclude that $\gamma' \subset \overline{U_i}$ and, in particular, $B \cap U_i \neq \emptyset$.
Finally, the fact that $B \cap U_i \neq \emptyset$ implies that $B \subset U_i$ because the boundary of the domain $U_i$ is disjoint to $B$.

Now, it is easy to check that $\F^{-1}(\e'_{i+1})$ is a lower bound of $(\L(U) \setminus \acc) \cap \F^{-1}(\acc)$. For if $\e \in \L(U) \setminus \acc$, $\F(\e) \in \acc$ and $\F(\e) \prec \e'_{i+1}$ we can a take a lead line $F(\gamma)$ that defines $\F(\e)$ and is disjoint to $F(B)$. After a suitable modification (as the ones explained above) we can suppose that the lead line is disjoint to $U_i$ (in fact, we can suppose it is contained in $\overline{U_j}$ for some $j > i$). However, this contradicts the fact that $\F(\e) \in \acc$ because $B$ is contained in $U_i$.
\medskip

\begin{figure}[htb]
\begin{center}
\includegraphics[scale = .7]{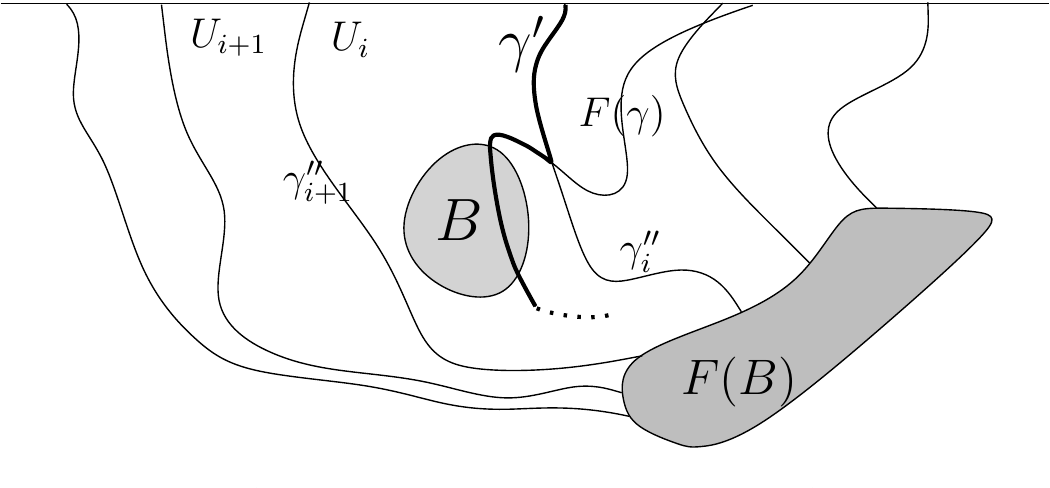}
\end{center}
\caption{Construction of $\gamma'$ in the claim in the proof of Proposition \ref{prop:notinfront}.}
\label{fig:rejilla}
\end{figure}

The conclusion now follows from an easy argument. Let $\gamma$ be a lead line landing at $q$ that determines the prime end $\q$.
Since $\gamma$ and $B$ are compact, we can choose an element from the increasing sequence $\{m_i\}$ supplied by (H2), say $m_j$, such that
$\T^{-m_j}(\gamma)$ and $B$ are disjoint and $\T^{-m_j + 1}(\q)$ is smaller than the lower bound of
$(\L(U) \setminus \acc) \cap \widehat{F}^{-1}(\acc)$. This implies that $\T^{-m_j}(\q) \in \L(U) \setminus \acc$
and, according to the claim, $\F(\T^{-m_j}(\q)) \in \L(U) \setminus \acc$.
Since $\rho \le 0$, $\F\T^{-m_j}(\q) \prec \T^{-m_j + 1}(\q)$ and we can apply the claim again to deduce that $\F^2\T^{-m_j}(\q)$ also belongs to $\L(U) \setminus \acc$. Following this procedure inductively we conclude that $\F^n \T^{-m_j}(\q)$ belongs to $\L(U) \setminus \acc$ for every $n \geq 0$.
In particular, if we apply this result to $n_j$, where $n_j$ comes from (H2), we obtain $\F^{n_j}\T^{-m_j}(\q) = \T^{-m_j}\F^{n_j}(\q) \in \L(U) \setminus \acc$, which is a contradiction
because  $T^{-m_j}F^{n_j}(q)$, the principal point of $\T^{-m_j}\F^{n_j}(\q)$, lies in $B$.
\end{proof}

The result obtained yields a precise statement that expresses the vague idea of $F(B)$ lying deeper than $B$.
The conclusion of Theorem \ref{thm:main} follows from the next proposition.

\begin{proposition}\label{prop:haciaatrasrotanigual}
For any accessible point $p \in \X$ with bounded backward semi--orbit there exists $C > 0$ such that
$$\big|(F^n(p))_1 -(p)_1 - n \rho \big| < C$$
for any $n \le 0$.
\end{proposition}

\begin{proof}
Proposition \ref{prop:notinfront} provides $\c \in \L(U)$ such that $\F(\acc_B \cap (-\infty, \c]) \subset \acc$.
In the following, denote $\acc \cap (-\infty, \c]$ by $\acc^{\c}$.
Our first step is to prove that the previous inclusion implies that $\acc^{\c}$ is positively invariant under $\F$, $\F(\acc^{\c}) \subset \acc^{\c}$.
Indeed, suppose that $\e$ belongs to $\acc^{\c}$ and let $\gamma$ be a lead line that determines $\e$.
Let $W$ be a neighborhood of $B$ such that $F(W) \cap B = \emptyset$.
Applying Lemma \ref{lem:goodleadlines} to $\e, \gamma$ and $W$ we obtain a prime end $\e' \preceq \e$ whose principal point lies in $B$,
so $\e' \in \acc_B^{\c}$, and a lead line $\gamma'$ that determines $\e'$ and is contained in $W \cup \gamma$.
Then, the arc $F(\gamma')$ determines $\F(\e') \in \acc^{\c}$ and is contained in $F(\gamma) \cap F(W)$.
Since $F(W)$ is disjoint to $B$ it follows that $F(\gamma)$ meets $B$. As the choice of $\gamma$ was arbitrary, $\F(\e) \in \acc^{\c}$.

Let $p$ be an arbitrary accessible point with bounded $\O^-(p)$ and $\p$ an accessible prime end such that $\Pi(\p) = \{p\}$.
Observe first that if $m$ is large enough, say $m \ge m_0 > 0$, $\p$ can be accessed with a lead line that does not meet $T^m(B)$ or,
equivalently, there is a lead line that determines $\T^{-m}(\p)$ and does not intersect $B$, i.e., $\T^{-m}(\p) \notin \acc$.

Recall that the sequence of prime ends $\{\q_i\}$ defined in the beginning of the proof of Theorem \ref{thm:main} satisfies $\q_i \to -\infty$. First, choose an element from the sequence which is smaller than $\min\{\c, \T^{-m_0}(\p)\}$ and an arbitrary lead line that determines it and apply Lemma \ref{lem:goodleadlines} to them (the prescribed neighborhood of $B$ is irrelevant). We obtain a lead line $\gamma_0$ that determines a prime end $\e_0 \prec \min\{\c, \T^{-m_0}(\p)\}$ whose principal point lies in $\partial B$. Then, take an element $\q^*$ of the sequence $\{\q_i\}$ that is smaller than $\T^{-1}(\e_0)$ and an associated lead line $\gamma^*$ that is disjoint to $\gamma_0$ except possibly for their landing point. Apply Lemma \ref{lem:goodleadlines} to $\q^*$ and $\gamma^*$ (the choice of $W$ is again irrelevant) to obtain $\e_1$ and $\gamma^*_1$. Since both $\gamma_0$ and $\gamma^*_1$ only intersect $B$ in a final subarc or in a singleton and $\e_1 \neq \e_0$, the lead lines $\gamma_0, \gamma^*_1$ do not share a common final subarc. Then, we can modify $\gamma^*_1$ by considering its last intersection point with $\gamma_0$ (in the case $\gamma_0, \gamma^*_1$ share a common landing point we exclude this point from our consideration), if any, and then replacing its initial subarc by an arc that goes parallel and disjoint to $\gamma_0$ (in a similar fashion as we did in Lemma \ref{lem:boundeddomain} to construct $\gamma_{\epsilon}$). We obtain a lead line $\gamma_1$ disjoint to $\gamma_0$ that determines $\e_1$.

The outcome of the procedure described in the previous paragraph is a pair of lead lines $\gamma_0, \gamma_1$ that define accessible prime ends $\e_0, \e_1$ whose principal points lie in $\partial B$ and satisfy:
$$
\e_1 \prec \T^{-1}(\e_0) \prec \e_0 \prec \min\{\c, \T^{-m_0}(\p)\}.
$$
\noindent
Furthermore, the intersection $\gamma_0 \cap \gamma_1$ is either empty or equal to a common landing point.
 As a consequence, in the first alternative there is an arc $\alpha$ in $\partial B$ so that we can apply Lemma \ref{lem:boundeddomain} to the triple $(\gamma_0, \gamma_1, \alpha)$ and in the second alternative we may employ Remark \ref{rmk:boundeddomain} to $(\gamma_0, \gamma_1)$.

For every $n \le 0$, there is a translation of $\F^{n}(\p)$, say $\T^{-a_n}\F^{n}(\p)$ that belongs to $[\e_1, \e_0]$.
Note that for every $n \le 0$,
$a_n \ge m_0$ because $\T^{-m_0}(\p) \preceq \T^{-1}(\p) \prec \F^{n}(\p)$ (recall $\rho \le 0$).
From Lemma \ref{lem:boundeddomain} (or Remark \ref{rmk:boundeddomain}) we obtain $C'> 0$ so that if $\e \in [\e_1, \e_0]$ is accessible and $(e)_1 \notin [-C', C']$,
where $e$ is the principal point of $\e$, then any lead line that determines $\e$ meets $\alpha$.
As a consequence, $|(e)_1| > C'$ automatically guarantees $\e \in \acc$.
This implies, in particular, that

\smallskip
\centerline{$|(T^{-a_0}(p))_1| = |(p)_1 - a_0| < C'$ because $\T^{-a_0}(\p) \notin \acc$.}
\smallskip

For any $n \le 0$,
\begin{align}
\label{eq:1} \big| (F^n(p))_1 - (p)_1 - n \rho \big| & = \big| (T^{-a_n}F^n(p))_1 + a_n - (p)_1 - n \rho \big| \le \\
& \le \big| (T^{-a_n}F^n(p))_1 \big| + \big|a_n - (p)_1 - n \rho \big| \le \nonumber \\
& \le \big| (T^{-a_n}F^n(p))_1 \big| + \big|(p)_1 - a_0 \big| + \big|a_n - a_0 - n \rho \big| \nonumber
\end{align}

Note that the rightmost term is bounded by $l+1$, where $l$ denotes the smallest integer such that $\T^{-l}\e_0 \prec \e_1$.
According to the arguments above, $a_n \ge m_0$ so $\T^{-a_n}(\p) \notin \acc$ and, in particular, $\T^{-a_n}(\p) \notin \acc^{\c}$.
From the positive invariance of $\acc^{\c}$ we deduce that if $n \le 0$ then
$\F^n\T^{-a_n}(\p) = \T^{-a_n}\F^n(\p) \notin \acc^{\c}$, so $|(T^{-a_n}F^n(p))_1| \le C'$.
Writing $C = 2C' + l + 1$, (\ref{eq:1}) yields \[|(F^n(p))_1 - (p)_1 - n \rho| < C,\] valid for all $n \le 0$.
\end{proof}

This ends the proof of Theorem \ref{thm:main}.
\end{proof}

As was briefly mentioned before stating (H1)--(H2), the case in which
$\{(F^n(q))_1 - n\rho\}_{n \ge 0}$ is not bounded from below is basically equal to the case
in which the sequence is unbounded from above that has been already addressed.
Indeed, the discrepancy is simply the result of the possible choices of the identification between $\AA$, the universal
cover of $\A$, and $\R \times (-\infty, 0]$.
As far as asymptotics is concerned, there are only two possibilities for
the covering projection of the angular coordinate, either $\widetilde{\theta} \mapsto e^{2\pi i\widetilde{\theta}}$
or $\widetilde{\theta} \mapsto e^{-2 \pi i\widetilde{\theta}}$.
Switching the choice of projection is equivalent to swapping ends of $\R$.

The analogues of the hypothesis (H1)--(H2) in this case are the following:

\smallskip
\noindent
There exist integers $a, b$ such that $b \ge 1$,
\begin{itemize}
\item[(H1')] $\rho \ge a/b =: \rho'$ and $\liminf_{n \to +\infty} (F^n(q))_1 - n \rho' = - \infty$ and
\item[(H2')] there are strictly increasing sequences of integers $\{m_i\}, \{n_i\}$ and an integer $k$, $0 \le k < b$, such that
$T^{m_i - an_i} F^{bn_i + k}(q)$ converges to a point $q_0 \in \X$ that is not fixed under $T^{-a} F^b$.
\end{itemize}

\medskip

It is easy to recast Lemmas \ref{lem:equalasymptotics}, \ref{lem:linealdriftH1H2} and \ref{lem:allperiodic}
 under these assumptions. We explicitly include the counterpart of Theorem \ref{thm:main}:

\begin{theorem}\label{thm:main2}
Suppose there exists an accessible point $q \in \X$ with bounded forward semi--orbit and such that
$$
\liminf_{n \to +\infty} \, (F^n(q))_1 - n \rho = - \infty,
$$
where $\rho = \widehat{\rho}(F,X)$, and (H1') and (H2') are satisfied.
Then, for any accessible point $p \in \X$ with $\O^-(p)$ bounded there exists $C > 0$ such that
$$\big|(F^n(p))_1 -(p)_1 - n \rho \big| < C$$
for any $n \le 0$.
\end{theorem}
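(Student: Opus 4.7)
The plan is to reduce Theorem \ref{thm:main2} directly to Theorem \ref{thm:main} via an orientation reversal of the angular coordinate in the universal cover. Introduce the involution $R\colon \AA \to \AA$ given by $R(\widetilde{\theta}, r) = (-\widetilde{\theta}, r)$, which satisfies $R T R^{-1} = T^{-1}$ and descends to the orientation--reversing homeomorphism $r\colon \A \to \A$, $r(e^{2\pi i \theta}, y) = (e^{-2\pi i \theta}, y)$. Set $\hat{f} = r \circ f \circ r^{-1}$, $\hat{X} = r(X)$, and $\hat{F} = R \circ F \circ R^{-1}$; since $f$ is orientation--preserving and $r$ reverses orientation, $\hat{f}$ is an orientation--preserving homeomorphism of $\A$ and $\hat{F}$ is a lift of $\hat{f}$. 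Let $\hat{p} = R(p)$, $\hat{q} = R(q)$; these are accessible in $\hat{X}$ because $r$ is a homeomorphism, and their forward/backward semi--orbits remain bounded because $r$ sends compact subsets of $\A$ to compact subsets.

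The key identities to keep in mind are $(R x)_1 = -(x)_1$ and the fact that the displacement function of $\hat{F}$ is the negative of that of $F$. These yield $\hat{\rho} := \widehat{\rho}(\hat{F}, \hat{X}) = -\rho$, together with the fundamental equality
\[
(\hat{F}^n(\hat{p}))_1 - (\hat{p})_1 - n \hat{\rho} = -\bigl( (F^n(p))_1 - (p)_1 - n \rho \bigr),
\]
and similarly for $\hat{q}$. In particular, $\liminf_{n \to +\infty} (F^n(q))_1 - n\rho = -\infty$ transforms into $\limsup_{n \to +\infty}(\hat{F}^n(\hat{q}))_1 - n \hat{\rho} = +\infty$, which is exactly the main assumption of Theorem \ref{thm:main}.

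Next I would translate (H1') and (H2') into (H1) and (H2) by setting $\hat{a} = -a$, $\hat{b} = b$, $\hat{k} = k$. For (H1), $\rho \ge a/b$ gives $\hat{\rho} \le -a/b = \hat{a}/\hat{b}$, and $\liminf_{n \to +\infty}(F^n(q))_1 - n(a/b) = -\infty$ becomes $\limsup_{n \to +\infty}(\hat{F}^n(\hat{q}))_1 - n(\hat{a}/\hat{b}) = +\infty$. For (H2), the conjugation identity $R T^j R^{-1} = T^{-j}$ yields
\[
T^{-m_i - \hat{a} n_i}\, \hat{F}^{\hat{b} n_i + \hat{k}}(\hat{q}) \;=\; R\bigl( T^{m_i - a n_i} F^{b n_i + k}(q) \bigr) \;\longrightarrow\; R(q_0) =: \hat{q}_0 \in \hat{\X},
\]
and $T^{-\hat{a}} \hat{F}^{\hat{b}} = R (T^{-a} F^b) R^{-1}$, so $\hat{q}_0$ is fixed by $T^{-\hat{a}} \hat{F}^{\hat{b}}$ if and only if $q_0$ is fixed by $T^{-a} F^b$, which is ruled out by (H2').

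Finally, apply Theorem \ref{thm:main} to $\hat{F}$, $\hat{X}$, $\hat{q}$, $\hat{p}$ to obtain a constant $C > 0$ with $|(\hat{F}^n(\hat{p}))_1 - (\hat{p})_1 - n \hat{\rho}| < C$ for all $n \le 0$; the displayed identity above transfers this bound verbatim to $p$, $F$, $\rho$. The only real work is bookkeeping, so the main obstacle is simply to verify carefully that every sign and each limsup/liminf swap correctly under $R$, and that the recast versions of Lemmas \ref{lem:equalasymptotics}, \ref{lem:linealdriftH1H2}, \ref{lem:allperiodic} (used to justify that (H1')--(H2') are the correct analogues) go through by the same reflection argument.
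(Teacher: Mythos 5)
Your proposal is correct and follows essentially the same approach as the paper, which deduces Theorem \ref{thm:main2} from Theorem \ref{thm:main} by ``swapping ends of $\R$'' via the alternative choice of universal covering projection; you simply make this explicit with the involution $R(\widetilde\theta, r) = (-\widetilde\theta, r)$ and carry out the sign bookkeeping the paper leaves implicit. The one point worth being careful about, which you handle correctly in spirit, is that $\widehat{\rho}(\hat F, \hat X) = -\rho$ requires noting that the induced map on the line of prime ends reverses the order (since $R$ conjugates $T$ to $T^{-1}$), so the rotation number computed against the natural order flips sign.
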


Similar statements concerning drifting bounded backward semi--orbits instead of forward semi--orbits are easily obtained by applying these
results to $F^{-1}$. Let us now restate and prove Theorem \ref{thm:resumen} (\ref{item:2}).

\begin{theorem}\label{thm:resumen2}
Suppose that the sequences $\{(F^n(p))_1 - n \rho\}_{n \le 0}$
and $\{(F^n(q))_1 - n \rho\}_{n \ge 0}$ are unbounded for some accessible points $p, q \in \X$ such that $\O^-(p)$ and $\O^+(q)$ are bounded. Then:
\begin{enumerate}
\item \label{item:aa} Both sequences $\{(F^n(p))_1\}_{n \le 0}$ and $\{(F^n(q))_1\}_{n \ge 0}$ have the form $n\rho + o(|n|)$.
Equivalently, the backward rotation number of $p$ is equal to the forward rotation number of $q$ and they are both equal to $\rho$:
$$\lim_{n \to -\infty} (F^n(p))_1/n = \lim_{n \to +\infty} (F^n(q))_1/n = \rho.$$
\item \label{item:bb} If $\rho \in \Q$ then the closure of both the projection onto $\A$ of $\O^-(p)$ and $\O^+(q)$
contain a periodic point whose rotation number is $\rho$.
\item \label{item:cc} If $\rho \in \Q$ and $\{(F^n(q))_1 - n \rho\}_{n \ge 0}$ converges either to $-\infty$ or to $+\infty$
then any limit point of the projection onto $\A$ of the forward orbit of $q$ is periodic and has rotation number $\rho$.
An analogous statement holds for $p$ and the projection of $\O^-(p)$.
\end{enumerate}
\end{theorem}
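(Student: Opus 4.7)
The plan is to deduce all three items by the same scheme. In each case I would assume the desired conclusion fails, use this failure together with the unboundedness hypotheses to verify either (H1)--(H2) for $q$ (or its analogue (H1')--(H2')), and then invoke Theorem \ref{thm:main} (resp.\ Theorem \ref{thm:main2}) to get a uniform bound on $\{(F^n(p))_1-(p)_1-n\rho\}_{n\le 0}$, contradicting the assumed unboundedness. The claims concerning $p$ are obtained by applying everything to $F^{-1}$, whose prime end rotation number is $-\rho$ and which swaps the roles of forward and backward semi--orbits.

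For \eqref{item:aa}, if $\lim_{n\to +\infty}(F^n(q))_1/n$ were different from $\rho$, then either $\limsup_{n\to +\infty}(F^n(q))_1/n>\rho$ or $\liminf<\rho$, meaning the forward drift is at least linear in one direction. By Lemma \ref{lem:linealdriftH1H2} (resp.\ its analogue for (H1')--(H2')), both hypotheses are satisfied and Theorem \ref{thm:main} (resp.\ Theorem \ref{thm:main2}) bounds the deviation of $\O^-(p)$, contradicting unboundedness. The claim for $p$ follows by the same argument applied to $F^{-1}$.

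For \eqref{item:bb}, write $\rho=a/b$ and suppose for contradiction that the closure of $\pi(\O^+(q))$ contains no periodic point of rotation $\rho$. Since the sequence $\{(F^n(q))_1-n\rho\}_{n\ge 0}$ is unbounded, at least one of $\limsup=+\infty$ or $\liminf=-\infty$ holds; treat the first case (the second is analogous). Pick $n_i\to +\infty$ with $(F^{n_i}(q))_1-n_i\rho\to +\infty$, pass to a subsequence so that $n_i\equiv k\pmod b$ with $n_i=bn'_i+k$, and, using compactness of $\pi(\O^+(q))$, extract a further subsequence with $\pi(F^{n_i}(q))\to \bar y\in X$ and lifts $T^{-c_i}F^{n_i}(q)\to \tilde y$ a lift of $\bar y$. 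The hypothesis on the closure gives $T^{-a}F^b(\tilde y)\neq \tilde y$. Setting $m_i:=c_i-an'_i$, the identity $c_i-n_i\rho=m_i-k\rho$ together with boundedness of $(T^{-c_i}F^{n_i}(q))_1$ forces $m_i\to +\infty$, so a final subsequence makes both $m_i$ and $n'_i$ strictly increasing. Thus (H1)--(H2) hold and Theorem \ref{thm:main} contradicts the unboundedness for $p$. The statement for $\O^-(p)$ follows by applying the same argument to $F^{-1}$.

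For \eqref{item:cc}, assume that some limit point $\bar y$ of $\pi(\O^+(q))$ fails to be periodic with rotation $\rho$; since by hypothesis $(F^n(q))_1-n\rho\to +\infty$ (the $-\infty$ case is symmetric), Lemma \ref{lem:allperiodic} directly yields (H1)--(H2) and Theorem \ref{thm:main} contradicts the unboundedness of $\{(F^n(p))_1-n\rho\}_{n\le 0}$. The statement for $p$ is handled via $F^{-1}$ as before. The main obstacle in the whole proof is the careful extraction of a subsequence in \eqref{item:bb}: we must ensure that $m_i$ and $n'_i$ can be simultaneously made strictly increasing while the limit $\tilde y$ remains a lift of a non--periodic accumulation point, and this relies on turning the qualitative ``unbounded $\limsup$'' statement into the divergence $m_i\to +\infty$ through the $n_i=bn'_i+k$ decomposition. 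Everything else reduces to combining previously established lemmata and symmetry under $F\leftrightarrow F^{-1}$.
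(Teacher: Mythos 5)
Your proposal is correct and follows essentially the same route as the paper: all three items are obtained by running the contrapositive of Theorem \ref{thm:main} (or its mirror \ref{thm:main2}), using the unboundedness of $q$'s forward drift together with $\rho\in\Q$ to secure (H1)/(H1') with $\rho'=\rho$, and then arguing that the assumption being negated is exactly what is needed to manufacture (H2)/(H2'). The only place the paper is terser than you is item \eqref{item:bb}, where it simply observes that (H2)/(H2') must fail and that this failure, once one passes to a convergent lifted subsequence, forces the limit point to be $T^{-a}F^b$-fixed; your explicit bookkeeping with $n_i=bn'_i+k$ and $m_i=c_i-an'_i$, and the verification that $m_i\to+\infty$ so that both index sequences can be taken strictly increasing, fills in precisely the step the paper leaves to the reader. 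The symmetry under $F\leftrightarrow F^{-1}$ is used in both proofs in the same way.

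One small remark on wording: in your discussion of item \eqref{item:bb} you refer to $\tilde y$ as ``a lift of a non--periodic accumulation point''; what is actually needed (and what your construction delivers) is that $\tilde y$ is not fixed by $T^{-a}F^b$, i.e.\ not periodic \emph{of rotation number} $\rho$. The point could still be periodic with a different rotation number, and (H2) would be satisfied just the same.
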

\begin{proof}
Let us start with (\ref{item:aa}) and proceed by contradiction. Suppose that the forward rotation number of $q$ is not equal to $\rho$.
Argue with $F^{-1}$ instead of $F$ to conclude the other equality.
This means that

\smallskip
\centerline{either \enskip $\liminf_{n \to +\infty} (F^n(q))_1/n < \rho$ \enskip or \enskip $\rho < \limsup_{n \to +\infty} (F^n(q))_1/n$.}
\smallskip

\noindent
Notice that both the limit inferior and superior exist and are finite because they are bounded by the
minimum and maximum of the displacement function on $\mathcal{O}^+(q)$, respectively.
In any case, Lemma \ref{lem:linealdriftH1H2} or its analogue imply (H1)--(H2) or (H1')--(H2') hold.
Then, Theorems \ref{thm:main} and \ref{thm:main2} conclude that the drift of the backward orbit of $p$ is bounded, a contradiction.

Items (\ref{item:bb}) and (\ref{item:cc}) are addressed in a similar fashion.
The fact that $\{(F^n(p))_1 - n \rho\}_{n \le 0}$ is unbounded means the hypothesis of Theorems \ref{thm:main} and \ref{thm:main2}
are not fulfilled.
However, since $\{(F^n(q))_1 - n \rho\}_{n \ge 0}$ is unbounded we have that

\smallskip
\centerline{either \enskip $\limsup_{n \to +\infty} (F^n(q))_1-n\rho = +\infty$
\enskip or \enskip $\liminf_{n \to +\infty} (F^n(q))_1-n\rho = -\infty$.}
\smallskip

\noindent Moreover, $\rho \in \Q$ automatically guarantees that (H1) or (H1'), respectively, holds.
The only possibility left is that (H2) or (H2'), respectively, is not satisfied.
This is the case if we forbid the existence of a periodic point of rotation number $\rho$ in the closure of the projection
of $\mathcal{O}^+(q)$. The variation stated in (\ref{item:cc}) is similarly settled using Lemma \ref{lem:allperiodic}.

\end{proof}

Finally, let us recall and prove Corollary \ref{cor:rhorealizado}:

\begin{corollary}
Let $f \colon \overline{\A} \to \overline{\A}$ be a homeomorphism isotopic to the identity of the closed annulus. As usual, denote $X$ an invariant continuum such that $\overline{\A} \setminus X$ is homeomorphic to $\A$, $F$ and $\X$ lifts of $f$ and $X$ to the universal cover and $\rho$ the prime end rotation number of $F$ in $X$.

Then, there exists a point $q$ in the boundary of $\X$ whose forward rotation number is equal to $\rho$. Furthermore, there exist an ergodic $f$--invariant Borel probability measure supported in $\partial X$ whose rotation number is $\rho$.
\end{corollary}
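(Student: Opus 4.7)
The plan is to invoke the dichotomy of Theorem \ref{thm:dicotomia}, produce an $f$-invariant probability measure of rotation $\rho$ supported in $\partial X$ by a Ces\`aro construction, and extract from it an ergodic component of rotation $\rho$; Birkhoff's theorem will then supply the point $q$ in the remaining case.

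Every semi-orbit in the closed annulus is automatically bounded, so Theorem \ref{thm:dicotomia} provides the dichotomy: either every accessible point of $X$ has forward rotation number $\rho$ (\emph{case B}), or every accessible point has backward rotation number $\rho$ (\emph{case A}). Accessible points are dense in $\partial X$, so pick one, $p$. In case B set $q:=p$; then $q \in \partial X$ and $\lim_{n \to +\infty}(F^n(q))_1/n = \rho$, proving the first assertion. The point $q$ in case A will be recovered via Birkhoff at the end.

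For the measure, take a weak-$\ast$ subsequential limit $\mu$ of the empirical averages $\mu_N = \frac{1}{N}\sum_{k=0}^{N-1}\delta_{f^k(p)}$ in case B, or of $\frac{1}{N}\sum_{k=1}^{N}\delta_{f^{-k}(p)}$ in case A. Standard Krylov--Bogolyubov arguments show $\mu$ is $f$-invariant, and since $p \in \partial X$ and $\partial X$ is closed and $f$-invariant, $\mathrm{supp}(\mu) \subset \partial X$. The displacement $u$ is continuous on $\overline{\A}$, and the telescoping identity
\[
\int u\, d\mu_N = \frac{(F^N(p))_1-(p)_1}{N}
\]
in case B (and its sign-reversed analogue in case A), combined with the asymptotic supplied by the dichotomy, gives $\int u\, d\mu = \rho$, that is $\rho(F,\mu) = \rho$.

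Finally, decompose $\mu = \int \mu_\omega\,d\lambda(\omega)$ into ergodic components; since $\int \rho(F,\mu_\omega)\,d\lambda(\omega) = \rho$, if $\{\omega:\rho(F,\mu_\omega)=\rho\}$ has positive $\lambda$-measure then any component there is the ergodic $\mu^\ast$ sought. The main obstacle is the opposite scenario, where the ergodic components split strictly above and below $\rho$ and an additional argument is needed to produce an ergodic representative of rotation exactly $\rho$; note that Krein--Milman applied to the convex compact slice of invariant measures with rotation $\rho$ does not suffice, since extreme points of this slice need not be ergodic. For $\rho \in \Q$ I would bypass this via the Barge--Gillette--Koropecki realisation theorem, which produces a periodic orbit in $X$ of rotation $\rho$, combined with Koropecki--Passeggi to place it in $\partial X$ when the rotation interval is non-degenerate; when the rotation interval collapses to $\{\rho\}$, any ergodic measure on $\partial X$ furnished by Krylov--Bogolyubov already has rotation $\rho$. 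For irrational $\rho$ I would approximate by such rational realisations in $\partial X$, take a weak-$\ast$ limit of rotation $\rho$ and re-apply ergodic decomposition, using the approximating measures to guarantee that the value $\rho$ is attained in the decomposition. Once $\mu^\ast$ is in hand, Birkhoff's theorem supplies $\mu^\ast$-almost every point $q \in \mathrm{supp}(\mu^\ast) \subset \partial X$ with $\lim_{n\to+\infty}(F^n(q))_1/n = \rho$, which completes case A of the first assertion as well.
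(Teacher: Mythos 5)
Your argument for the first assertion and your construction of an $f$--invariant (not necessarily ergodic) measure of rotation $\rho$ supported in $\partial X$ are both sound, and you correctly identify the crux: ergodic decomposition of $\mu$ need not produce a component of rotation exactly $\rho$. But your workarounds do not close this gap. For irrational $\rho$, the step ``take a weak--$\ast$ limit of rotation $\rho$ and re--apply ergodic decomposition, using the approximating measures to guarantee that the value $\rho$ is attained'' is not an argument: the limit measure has rotation $\rho$, but its ergodic components may again split strictly above and below $\rho$ (a weak--$\ast$ limit of ergodic measures need not be ergodic, and nothing about the approximating sequence forces the decomposition to hit $\rho$). For rational $\rho$, your appeal to Barge--Gillette--Koropecki and Koropecki--Passeggi is closer to what the paper actually does, but it is invoked for the boundary of the annular region rather than for the natural essential annular continuum $K$ (the paper carefully replaces $\partial U$ by $K = \overline{\A} \setminus (C^+ \cup C^-)$ before quoting these realization theorems, since they are stated for essential annular continua, and then has to argue separately that the measure can be pushed to $\partial U$).

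The paper resolves the gap by citing a hard theorem you do not use: Handel's result (improved by Koropecki) that there is an \emph{ergodic} invariant measure supported on $K$ with rotation number $\rho$. This is the essential missing ingredient; without it the irrational case is not handled. Note also that this affects your proof of the first assertion in your case A, since there you rely on Birkhoff applied to the ergodic $\mu^\ast$ that you have not produced. The paper instead proves the first assertion without any measure theory, by the stronger conclusion of Theorem \ref{thm:main}: bounded (not merely sublinear) backward deviation of accessible points, combined with an $\alpha$--limit construction and a triangle--inequality estimate, produces a point $q \in \partial \X$ whose \emph{forward} deviation is bounded. Your use of the softer dichotomy of Theorem \ref{thm:dicotomia} discards the bounded--deviation information that makes this direct argument possible.
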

\begin{proof}
Let us begin with the first statement and proceed by contradiction. As the dynamics is placed in the closed annulus every semi--orbit is bounded. Suppose that $q_0 \in \X$ is an accessible point for which the statement does not hold. Without lose of generality suppose that

\centerline{$\displaystyle \limsup_{n \to +\infty} \frac{(F^n(q_0))_1}{n} > \rho$.}

\noindent By Lemma \ref{lem:linealdriftH1H2}, we are in the conditions to apply Theorem \ref{thm:main} and conclude that the deviation of every backward semi--orbit of an accessible point $p \in \X$ is bounded, i.e., there exists $C > 0$ such that

\centerline{
$\big|(F^n(p))_1 -(p)_1 - n \rho \big| < C$, \enskip \enskip for every $n \le 0$.}

Suppose $p_0 \in \X$ is accessible and satisfies the previous inequalities for a constant $C_0$. Take any lift $q$ of a point that belongs to the $\alpha$-limit set of the projection of $p_0$ to the annulus. Then $q = \lim_{k \to +\infty} T^{-m_k}F^{n_k}(p_0)$, where $\{n_k\}_{k \ge 1}$ is a decreasing integer sequence.

\medskip

\noindent
\emph{Claim:} $q$ satisfies the conclusion in the statement.

Indeed, for a given $m \ge 0$ consider a sufficient large index $k$ so that $m + n_k < 0$ and
\[
\big| (T^{-m_k}F^{n_k}(p_0))_1 - (q)_1 \big| < 1 \enskip \enskip \text{and} \enskip \enskip \big| (T^{-m_k}F^{m + n_k}(p_0))_1 - (F^m(q))_1 \big| < 1
\]
In consequence,
\begin{align*}
\big| (F^m(q))_1 - & (q)_1 - m \rho \big| \le \big| (F^m(q))_1 - (T^{-m_k}F^{m + n_k}(p_0))_1 \big| + \\
&+ \big| (T^{-m_k}F^{m + n_k}(p_0))_1 - (T^{-m_k}F^{n_k}(p_0))_1 - m \rho \big| + \big| (T^{-m_k}F^{n_k}(p_0))_1 - (q)_1\big| < \\
&< 2 + \big| (F^{m + n_k}(p_0))_1 - F^{n_k}(p_0))_1 - m \rho \big| < 2 + 2C_0
\end{align*}
and it follows that the forward semi--orbit of $q$ has bounded deviation and, in particular, its forward rotation number is equal to $\rho$.

Let us move now the stronger statement on the realization of $\rho$ as the rotation number of an ergodic measure on $\partial X = \partial U$. For simplicity, thicken the annulus so that $\partial U$ does not meet its lower boundary. Denote $C^+, C^-$ to the connected components of $\overline{\A} \setminus \partial U$ that contain the boundary components of the annulus and let $K = \overline{\A} \setminus (C^+ \cup C^-)$. Then, $K$ is an $f$--invariant essential continuum composed of $\partial U$ and all except two of its complementary domains.

A result of Handel \cite{handel} improved by Koropecki \cite{koropecki} proves that there is an ergodic invariant Borel probability measure $\mu$ supported on $K$ such that $\rho(F, \mu) = \rho$. In order the fulfill the conditions in the statement we need to guarantee that the support of the measure is contained in $\partial U$.

Suppose $D$ is a connected component of the complement of $\partial U$ such that $\mu(D) > 0$. Then $D$ contains a point $x$ that is recurrent and $\rho(F, x) = \rho$ (because both conditions are generic with respect to $\mu$, i.e. satisfied for $\mu$--almost every point, by Poincaré recurrence theorem and Birkhoff ergodic theorem, respectively).
It follows that $D$ is periodic and $\rho = \rho(F, \mu) \in \Q$. Using this additional property, $\rho \in \Q$, we can apply the detailed description of \cite{koropasseggi}: either there is a periodic point on $\partial U$ with rotation number $\rho$ (which gives automatically an ergodic measure of rotation $\rho$) or $\partial U$ is contained in the basin of a finite family of at least two topological ``rotational'' attractors and repellors whose boundaries belongs to $\partial U$. In the latter alternative, any ergodic measure supported on the boundary of any of these attractors or repellors whose interior is contained in $\overline{\A}$ has rotation number $\rho$.
\end{proof}

The first statement of the previous corollary does not hold if we require $q$ to be accessible. Indeed, consider the continuum composed of the union of two closed disks and one curve spiraling onto them (similar to Figure 1 in \cite{BG}, see also Figure 2 in \cite{koropasseggi} and the description therein). Let the dynamics be a rotation in each disk and be wandering in the curve (carrying points from the second to the first disk). Embed the construction in the annulus by identifying the fixed point of the first disk to the lower end. Then, it is easy to check that the prime end rotation number is 0 but every accessible point has non--zero forward rotation number equal to the rotation number of the boundary of the first disk.

\section*{Acknowledgements}
The author expresses its gratitude to the referee, whose patient work and comments on the original manuscript led to a massive improvement of the presentation of the results.


\begin{thebibliography}{KLN15}

\bibitem[AY92]{yorke}
K. Alligood, J. Yorke, \emph{Accessible saddles on fractal basin boundaries}, Ergodic Theory Dynam. Systems \textbf{12} (1992), 377--400.

\bibitem[BG91]{barge}
M. Barge, R.M. Gillette, \emph{Rotation and periodicity in plane separating continua},
Ergodic Theory Dynam. Systems {\bf 11} (1991), no. 4, 619-631.

\bibitem[BG92]{BG}
M. Barge, R.M. Gillette, \emph{A fixed point theorem for plane separating continua},
Topology Appl. {\bf 43} (1992), no. 3, 203-212.

\bibitem[C13]{caratheodory}
C. Caratheodory, \emph{\"{U}ber die Begrenzung einfach zusammenh\"{a}ngender Gebiete}, Math. Ann. 73 (1913), 323--370.

\bibitem[CL51]{cartwright}
M.L. Cartwright, J.E. Littlewood, \emph{Some fixed point theorems}, Ann. of Math. \textbf{54} (1951), 1--37.

\bibitem[CL66]{col}
E.F. Collingwood, A.J. Lohwater, \emph{The Theory of Cluster Sets}, Cambridge Tracts in Mathematics and Mathematical Physics,
no. 56, Cambridge University Press, Cambridge, 1966.

\bibitem[H90]{handel}
M. Handel, \emph{The rotation set of a homeomorphism of the annulus is closed}, Comm. Math. Phys. \textbf{127}(2) (1990), 339--349.

\bibitem[H17]{nagoya}
L. Hernández--Corbato, \emph{An elementary proof of a theorem by Matsumoto}, Nagoya Math. J. \textbf{227} (2017), 77--85.

\bibitem[K17]{koropecki}
A. Koropecki, \emph{Realizing rotation numbers on annular continua}, Math Z. \textbf{285} (2017), no. 1--2, 549--565.

\bibitem[KLN15]{duke}
A. Koropecki, P. Le Calvez, M. Nassiri, \emph{Prime ends rotation numbers and periodic points}, Duke Math. J. 164 (2015), no. 3, 403--472.

\bibitem[KP]{koropasseggi}
A. Koropecki, A. Passeggi, \emph{A Poincaré--Bendixson theorem for translation lines and applications to prime ends}, Comment. Math. Helv., to appear.

\bibitem[L91]{lecalvez}
P. Le Calvez, \emph{Propri\'et\'es dynamiques des diff\'eomorphismes de l'anneau et du tore}, Ast\'erisque 204 (1991).

\bibitem[L13]{leroux}
F. Le Roux, \emph{L'ensemble de rotation autour d'un point fixe d'homéomorphisme de surface}, Astérisque 350 (2013).

\bibitem[Ma82]{mather} J.N. Mather, \emph{Topological proofs of some purely topological consequences of
Carath\'eodory's theory of prime ends}. Selected Studies. North Holland Publis. Co Eds. Th.M.
Rassias, G.M. Rassias (1982), 225-255.

\bibitem[M12]{matsumoto}
S. Matsumoto, \emph{Prime end rotation numbers of invariant separating continua of annular homeomorphisms}.
Proc. Amer. Math. Soc. 140 (2012), no. 3, 839--845.

\bibitem[MZ89]{misiurewicz}
M. Misiurewicz, K. Zieman, \emph{Rotation sets for maps of tori}, J. London Math. Soc., 40 (1989) 490--506.

\bibitem[PPS]{uruguayos}
A. Passeggi, R. Potrie, M. Sambarino, \emph{Rotation intervals and entropy on attracting annular continua}, Geom. Top., to appear.

\bibitem[Po91]{pommerenke}
Ch. Pommerenke, \emph{Boundary behavior of conformal maps}, Lecture Notes in Math. Springer--Verlag, 1991.

\end{thebibliography}
\end{document}